\DeclareMathOperator{\prox}{Prox}
\newtheorem{Def}{Definition}
\newcommand{\h}[1]{\mathbf{#1}}
\newcommand{\nn}{\nonumber}
\newcommand{\supp}{{\text{supp}}}
\begin{document}

\title{Unified Analysis on $L_1$ over $L_2$   Minimization for signal recovery
}


\author{Min Tao        \and
       Xiao-Ping Zhang 
}


\institute{Min Tao \at
              Department of
   Mathematics,  National Key Laboratory for Novel Software Technology, Nanjing University, Nanjing, 210093, China. \\
              \email{taom@nju.edu.cn}           
           \and
          Xiao-Ping Zhang \at
              Department of Electrical, Computer and Biomedical Engineering, Ryerson University,  Toronto, ON M5B 2K3, Canada.
}

\date{Received: date / Accepted: date}

\maketitle

\begin{abstract}
In this paper, we carry out a unified study for $L_1$ over $L_2$  sparsity promoting models, which are widely used in the regime of coherent dictionaries for recovering sparse nonnegative/arbitrary signals. First, we provide a unified theoretical analysis on  the  existence of the global solutions of the constrained and the unconstrained $L_{1}/L_{2}$ models.
Second, we analyze the sparse property of any local minimizer of these $L_{1}/L_{2}$ models which serves as a certificate to rule out
the nonlocal-minimizer stationary solutions.
Third, we derive an analytical solution for the proximal operator of the $L_{1} / L_{2}$ with nonnegative constraint.
 Equipped with this, we apply the alternating direction method of multipliers   to  the unconstrained model with nonnegative constraint in a particular splitting way,
referred to as ADMM$_p^+$. We establish its global convergence to a  d-stationary solution (sharpest stationary) without
 the Kurdyka-\L ojasiewicz assumption.
Extensive numerical simulations  confirm the superior  of ADMM$_p^+$ over
 the state-of-the-art methods in sparse recovery. In particular, ADMM$_p^+$ reduces computational time by about $95\%\sim99\%$ while achieving a much higher accuracy than the commonly used  scaled gradient projection method for the wavelength misalignment problem.
\keywords{Sparse recovery \and fractional programming \and coherent dictionary \and d-stationarity}
\subclass{MSC 90C26 \and MSC    90C90 \and 49N45}
\end{abstract}

Compressive sensing (CS) is to seek the sparsest solution from a set of undersampled linear measurements. Mathematically, a fundamental
 problem in CS can be formulated as a constrained model,

\begin{eqnarray}\label{CSzero}
\min _{{\h x} \in {\cal X}}\|{\h x}\|_{0}, \;\; s.t. \;\; A {\h x}={\h b},
\end{eqnarray}
where $A \in \mathbb{R}^{m \times n}(m \ll n)$ is a sensing matrix and  the observation of ${\h b}\in\mathbb R^n$, $\|\cdot\|_{0}$ is the $L_{0}$ norm (the number of nonzero elements). We consider the recovery of nonnegative/arbitrary compressed signal, which corresponds to ${\cal X}=\mathbb R^n_+$ and $\mathbb R^n$,
 respectively. Unfortunately, the optimization (\ref{CSzero}) is known to be NP-hard \cite{BKNa95}.
One common approach is to relax $L_0$ norm to $L_1$ norm, leading to basis pursuit
model \cite{CDS98}. Theoretically, the exact recovery by the $L_1$ minimization  is
guaranteed under the restricted isometry property \cite{CandesTao05} or
null space property \cite{AWR09}.
Although the $L_{1}$ minimization technique has been widely used, it is not able to reconstruct the sparsest solutions
when columns of $A$ are highly coherent, such as those applications rising from discretization of continuum image problems (such as medical and radar) when the grid spacing is below the Rayleigh threshold \cite{AW12}.

 As such, various nonconvex regularizers, such as the $L_p$ (quasi-)norm ($0<p<1$) \cite{CR07},
  $L_1$-$L_2$ \cite{YEX14}, transform $L_1$ \cite{NIK00}, the ratio of $L_1$ over $L_2$ norm ($L_1$/$L_2$) \cite{YEX14}  have been developed to enhance the recovery quality.
Among these nonconvex regularizations, $L_1/L_2$
 can approximate  $L_{0}$ norm very well when  the domain is without origin, due
   to its being {\it scale-invariant} and {\it parameter-free} as well as
      $L_{0}$ norm.
      For one-sparse signal, the $L_{1} / L_{2}$ is the same as the $L_0$ norm.
 The $L_1/L_2$ arose as sparseness measure \cite{Hoy02,HNRS09} and has attracted a considerable attention due to its wide applications, e.g.,
  nonlinear matrix factorization \cite{4541671} and
   blind deconvolution \cite{JSW12,Audrey15}.

     In this paper, we focus on two  commonly-used  $L_{1} / L_{2}$ models for signal recovery, i.e., the constrained and the penalized/unconstrained models:
\begin{eqnarray}\label{L1o2Con}
\begin{array}{ll}
\min _{\mathbf{x}}& \displaystyle{\frac{\|\mathbf{x}\|_{1}}{\|\mathbf{x}\|_{2}}} \\[0.2cm]
 s.t. & \mathbf{x} \in \mathcal{H}:=\{{\h x} \in{\cal X} \mid A {\h x} = {\h b}\},
 \end{array}
\end{eqnarray}
and
\begin{eqnarray}
\label{L1o2uncon}
\min _{{\h x}\in {\cal X}} F({\h x}):=\gamma \frac{\|{\h x}\|_{1}}{\|{\h x}\|_{2}}+\frac{1}{2}\|A {\h x}-\mathbf{b}\|_{2}^{2},
\end{eqnarray}
where   $A$ and   ${\h b}$ are defined identically as in (\ref{CSzero}).
The penalized/unconstrained model (\ref{L1o2uncon}) can  tackle both noisy and noiseless observations while (\ref{L1o2Con})
can only deal with unnoisy data, it is more meaningful to develop efficient and convergent algorithms to solve (\ref{L1o2uncon}).


First, to recover the signal, we need to solve $L_1$ over $L_2$ minimization models (\ref{L1o2Con}) or (\ref{L1o2uncon}).
The first question  is whether these models are well-defined.
Recently, Zeng et al. \cite{ZengYuPong20} analyze the existence of global optimal solutions of the constrained model (\ref{L1o2Con}) for the case of ${\cal X}={\mathbb R}^n$. However, the nonemptyness of the global solution set of the model of (\ref{L1o2Con})  with ${\cal X}={\mathbb R}_+^n$ and
  the model (\ref{L1o2uncon}) with ${\cal X}={{\mathbb R}^n_+}/{\mathbb R}^n$ (i.e., ${{\mathbb R}^n_+}$ and ${\mathbb R}^n$)   have not been studied.

Many early works focus on developing different optimization algorithms  for solving $L_{1} / L_{2}$ minimization. To name a few,  the scaled gradient projection method (SGPM) \cite{ELX13,YEX14} for (\ref{L1o2uncon}) with ${\cal X}={\mathbb R}_+^n$, as well as the alternating direction method of multiplier (ADMM) approach \cite{RWDL19} for (\ref{L1o2Con}) with ${\cal X}={\mathbb R}^{n}$, accelerated schemes for solving the constrained model
(\ref{L1o2Con}) with ${\cal X}={\mathbb R}^{n}$ \cite{WYYL20},
all lack global convergence guarantees.
There are two exceptions:  Zeng et al. \cite{ZengYuPong20} apply moving-balls-approximation based algorithms to solve $L_1/L_2$ minimization over an inequality constraint for arbitrary signal and prove its local linear convergence under  some  conditions. Our previous work \cite{Tao20} proposes
 an ADMM-based algorithm for solving (\ref{L1o2uncon}) with $\mathcal{X}=\mathbb{R}^{n}$ with global
convergence guarantee.
Although there do exist a few different reformulations for the unconstrained model (\ref{L1o2uncon}) with $\mathcal{X}=\mathbb{R}_+^{n}$, it turns out
that most of these for implementing ADMM result in a  violation of convergence guarantee \cite{LiPong15,HongLuoRazaviyayn16}.
  It is still unknown how to solve the  model (\ref{L1o2uncon})  with convergence guarantee where $\mathcal{X}=\mathbb{R}^{n}_+$.


In this paper, we carry out
 a unified theoretical study on  (\ref{L1o2Con}) and (\ref{L1o2uncon}).
First,   we provide a unified analysis on the existence of global solutions of  (\ref{L1o2Con}) and (\ref{L1o2uncon}).
Inspired by \cite{ZengYuPong20}, we introduce an auxiliary optimization problem and
verify that the solution set of  (\ref{L1o2Con}) is nonempty if  the objective function value of  (\ref{L1o2Con}) is strictly less than
that of the auxiliary optimization problem.
A similar result is also proved for the model (\ref{L1o2uncon}) while the proof is much more complicated.
Then, we illustrate that
 this sufficient condition  can be guaranteed by the $\mu$-spherical section property of ${\cal N}(A)$ (i.e., the null
space of $A$) \cite{Vavasis}.
Second,  we exploit the sparse property of any local minimizer of (\ref{L1o2Con}) or (\ref{L1o2uncon}).
In particular, we prove that any feasible vector cannot be a local minimizer if  the columns of $A$ restricted on the support set  are  linearly
dependent.
Third, we design an efficient and convergent algorithm for the penalized model (\ref{L1o2uncon}) with ${\cal X}={\mathbb R}^n_+$.
To do so, we   derive a closed-form solution for one global solution of the proximal operator of $\left(L_{1} / L_{2}\right)^{+}$ (i.e.,
${\frac{\|\h x\|_1}{\|\h x\|_2}+\iota_{{\mathbb R}^n_+}(\h x)}$ and $\iota_{{\mathbb R}^n_+}(\h x)$ is the indicator function of ${\mathbb R}^n_+$)
 and  accompanied by a practical solver.
 Equipped with this,   we propose a specific  variable-splitting scheme of ADMM for solving
 (\ref{L1o2uncon})  with ${\cal X}={\mathbb R}^n_+$.
We referred to it as   ADMM$_p^+$ by incorporating the practical proximal solver.
  Although there already exist
 many seminal works on convergence analysis for the nonsmooth  nonconvex problem, e.g., \cite{Attouch13,HongLuoRazaviyayn16,LiPong15},
 all these convergence results focus on converging to a stationary point which is much weaker than to a
 d-stationary point \cite{DongTao21,PangRazAlv,LSM20}. Furthermore, all these approaches achieve  global convergence by assuming
 the introduced merit function with the Kurdyka-\L ojasiewicz (KL) property. In contrast,
 we introduce a novel merit function $\cal T$ (see (\ref{shMF})) instead of the augmented Lagrange function or its variants \cite{LiPong15,HongLuoRazaviyayn16} for aiding the global convergence analysis.  Then,  we establish the global convergence of ADMM$_p^+$ converging  to a  d-stationary point
 by proving the merit function with  the KL property.
We conduct extensive experiments on algorithmic behaviors and various sparse recovery model comparisons,
testing on two specific applications. All of these  showcase the superior performance of the proposed approach over the state-of-the-art in sparse nonnegative signal recovery.
In particular, ADMM$_p^+$ always converges to a more accurate solution (d-stationary) and significantly reduces the computational time in comparison with
 SGPM
and accelerated proximal gradient methods (monotone version with fixed stepsize/line search and its nonmonotone versions).
In summary, 	our contributions are threefold:
\begin{enumerate}
	\item[(1)] We provide a unified theoretical analysis on the existence of global solutions of the constrained model (\ref{L1o2Con}) and unconstrained model (\ref{L1o2uncon}).
\item[(2)] We exploit the sparse property of any local minimizer of the constrained model (\ref{L1o2Con}) and unconstrained model (\ref{L1o2uncon}). This property serves as a {\it certificate} to rule out the stationary solution that is not a local minimizer.
	\item[(3)] We derive an analytic solution of the proximal operator of $(L_1/L_2)^+$ which allows us
 to design an efficient algorithm for the unconstrained model (\ref{L1o2uncon}) with ${\cal X}={\mathbb R}^n_+$, i.e., ADMM$_p^+$.
  We establish its global convergence to a d-stationary solution without KL assumption. Extensively numerical simulations further verify the computational efficiency of the ADMM$_p^+$
 over the state-of-the-art in sparse recovery.
\end{enumerate}

%
%
%
%
%

The rest of this paper proceeds as follows. We describe the notations and definitions in Section \ref{Sec-Preliminaries}.
 In Section \ref{wellDef}, we elaborate on the existence of optimal solutions of
(\ref{L1o2Con}) and  (\ref{L1o2uncon}). We analyze the sparse property and provide the exact recovery theory  for  (\ref{L1o2Con})
in Section \ref{RecoveryT}.
In Section \ref{ADMMS}, the proximal operator for $\left(L_{1} / L_{2}\right)^{+}$ is derived. Then, we solve the unconstrained model (\ref{L1o2uncon}) with ${\cal X}={\mathbb R}_+^n$ via ADMM, where its global convergence is established.
Section \ref{NumRes} devotes extensive experiments to showcase the superior performance of the proposed approach in sparse recovery. Conclusions are given in Section \ref{Sec-Conclusion}.

\section{Preliminary} \label{Sec-Preliminaries}

 We use a bold letter to denote a vector, e.g., ${\h x} \in \mathbb{R}^{n}$, and $x_{i}$,  $\|\h x\|_0$ and $|{\h x}|$ denote the $i$-th entry of $\h{x}$, its zero norm of $\h{x}$ and  the vector with the absolute value of $\h{x}$ for
  each entry, respectively. $\|{\h x}\|_2$ and $\|{\h x}\|_p$ $(0<p<1)$ denote its 2-norm and
 $p$-norm ($\|{\h x}\|_p=(\sum_{i=1}^n x_i^p)^{1/p}$), respectively.
 The subscript  $2$ in $\|\cdot\|_2$ is omitted when there is no ambiguity.
  We use $\mathbb R^n_+$, $\mathbb R^n_-$  and $\mathbb R^n_{++}$ to denote the set of  nonnegative, nonpositive  and positive vectors, respectively. The notation of ${{\bf 1}}$ represents a vector with all entries equal to $1$.
   $I_{n}$ is $n \times n$ identity matrix, and $\odot$  presents the componentwise product. We define $[n]:=\{1,2, \ldots, n\}$.
    Given an index set $\mathcal{D} \subseteq [n]$, we use $\sharp(\mathcal{D})$ and $\mathcal{D}^c$ to present the cardinality of $\mathcal{D}$ and its complementary set. We specify that ${\cal X}$ is either ${{\mathbb R}^n_+}$ or ${\mathbb R}^n$ throughout this paper.
For a matrix $A\in\mathbb R^{m\times n}$, we denote $A {\cal X}=\left\{{\h y} \; \mid \; {\h y}=A {\h x},\; {\h x}\in {\cal X}\right\}$
and refer  to the projection onto the closed set of $A \mathcal{X}$ as $\operatorname{Proj}_{(A \mathcal{X})}(\cdot )$.
For a closed set $\mathcal{S} \in \mathbb{R}^{n}$, we use the notation $\iota_{\cal S}(\mathrm{x})$ to represent the indicator function of the set $\mathcal{S}$. Given a matrix $A \in \mathbb{R}^{m \times n}$ or a vector $\h{x} \in \mathbb{R}^{n}$ and an index set $\Lambda \subseteq[n],$ we use $A_{\Lambda}$, ${\h x}|_{\Lambda}$ to denote $A[:, i]_{i \in \Lambda}$ and a subvector of $\h {x}$ with entries in $\Lambda,$ respectively. $\mathcal{N}(A)$ denotes the null space of $A$ and $r(A)$ denotes the rank of $A$.   Given a square matrix $A$,
$A\succ{\bf 0}$ means that $A$ is a positive definite matrix. For a vector ${\h x}$, we use the notation of ${\h x}\not\le{\bf 0}$ to represent
that not all the entries of ${\h x}$ are nonpositive.
Given $\epsilon>0$, we use $\mathcal{B}_{\epsilon}(\hat{\mathbf{x}})$ and $\breve{\mathcal{B}}_{\epsilon}(\hat{\mathbf{x}})$ to denote the open ball of $\{\mathbf{x} \mid\|\mathbf{x}-\hat{\mathbf{x}}\|<\epsilon\}$ and the open ball without the center, respectively. Given two sets of $\mathcal{A}$ and $\mathcal{B},$ we use the notation $\mathcal{A} \backslash \mathcal{B}$ to represent the intersection of $\mathcal{A}$ and the complement of $\mathcal{B}$.  The notation of ${{\mathbb R}^n_+}/{\mathbb R}^n$ means ${{\mathbb R}^n_+}$ or ${\mathbb R}^n$.
%
An extended-real-valued function $f:{\mathbb R}^n\rightarrow (-\infty,+\infty]$ is said to be
 proper if its domain ${\text{dom}} f:=\{{\h x}\;| \;f(\h x)<\infty \}$ is nonempty. A proper function $f$ is said to
 be closed if it is lower semi-continuous.  For a proper closed function $f$ and ${\hat{\h x}}
 \in {\text{dom}} f$,  the regular subdifferential ${\hat {\partial}} f({\hat{\h x}})$
 and the limiting subdifferential $\partial f({\h x})$ [24]  are defined as
$$
		{\hat\partial} f({\hat{\h x}})=\left\{{\h v}\Big |{\mathop{\lim}\limits_{\h x\to{\hat{\h x}}}}\inf_{{\h x}\neq {\hat{\h x}}} \frac{f(\h x)-f(\hat{\h x})-\langle {\h v},{\h x}-{\hat{\h x}}\rangle}{\|{\h x}-\hat{\h x}\|}\ge 0\right\},$$
		$$\partial f({\hat{\h x}}):=\left\{{\h v}\Big | \;\exists\; {\h x}^k\rightarrow {\hat{\h x}},\;f({\h x}^k)\rightarrow f(\hat{\h x}),
 {\h v}^k\in{\hat\partial} f({\h x}^k), {\h v}^k\rightarrow{\h v} \right\},
		$$
respectively.
Note $\partial \iota_{\mathcal{X}}(\mathbf{x})=\{\mathbf{0}\}$ if $\mathcal{X}=\mathbb{R}^{n},$ and $\partial_{\iota \mathcal{X}}(\mathbf{x})=\left\{{\h d} \mid {\h d}_{\Lambda}=\mathbf{0}, {\h d}_{\Lambda^{c}} \leq \mathbf{0},\right.$ $\left. \Lambda={\text{supp}}(\mathbf{x})\right\}$ if
$\mathcal{X}=\mathbb{R}_{+}^{n} .$
Throughout the paper, we assume that ${\h b} \neq 0$ and ${\cal H} \neq  \phi$.
 {\it By defining ${\frac{\|{\bf 0}\|_1}{\|\bf 0\|_2}}=1$,
the objective functions of (\ref{L1o2Con}) and (\ref{L1o2uncon}) are lower semi-continuous over $\cal X$.}
Suppose $f$ be a proper lower semicontinuous function, we define the proximal mapping  \cite[Definition 1.22]{RockWets}: $\prox_f(v) = \arg\min_{x} \Big\{ f(x) + \frac{1}{2}\|x-v\|^2\Big\}.$

For nonconvex and nonsmooth programs, there are various stationary concepts  and d-stationary
 is arguably the sharpest kind among them \cite{DongTao21,PangRazAlv,LSM20}.
A point $\bar{\mathbf{x}}(\neq {\bf 0})\in{\cal X}$ is called a d-stationary point to (\ref{L1o2uncon}), if it satisfies
$
F^{\prime}\left({\bar{\h x}}; {\h x}-{\bar{\h x}}\right) \geq 0 . \quad \forall {\h x} \in {\cal X},
$
where $F^{\prime}\left({\bar{\h x}}; {\h x}-{\bar{\h x}}\right)$ is the directional derivative of $F(\cdot)$.
According to \cite[Definition 2.3.4]{Clark90}
and \cite[Fact 5]{LSM20}, $\bar{\mathbf{x}}(\neq {\bf 0})\in{\cal X}$ is a d-stationary point of $(\ref{L1o2uncon})$  if and only if
\begin{eqnarray}\label{viopt}
\left\langle{\h x}-{\bar{\h x}},\;\gamma\left(\frac{{{\h 1}}}{\|{\bar{\h x}} \|_2}-\frac{\|{\bar{\h x}}\|_{1}}{\|{\bar{\h x}}\|_{2}^{3}} {\bar{\h x}}\right)+A^{\top}(A {\bar{\h x}}-{\h b})\right\rangle \geq 0,\;\;
\forall {\h x} \!\in {\cal X},\!
\end{eqnarray}
where ${{\h 1}} \in {\mathbb R}^n$ by noting that $\frac{\|{\h x}\|_1}{\|{\h x}\|_2}+\iota_{{\mathbb R}_+^n}({\h x})=\frac{{\bf 1}^\top {\h x}}{\|\h x\|_2}+\iota_{{\mathbb R}_+^n}({\h x})$.

Next, we review the concepts of locally sparse set  and the uniformity of a vector \cite{YEX14} and the KL property  \cite{BDL07} which is widely used in convergence  analysis.
\begin{Def}\label{def2.3} ${\h x} \in {\cal H}$ is called locally sparse if $\nexists \;{\h y}\in {\cal H} \backslash\{{\h x}\}$ (${\cal H}$ defined in (\ref{L1o2Con})) such that
$
{\text{supp}}(\h y) \subseteq {\text{supp}}(\h x).
$
Denote by ${\cal H}_{L}=\{\h x \in {\cal H} \mid {\h x}\;\mbox{is locally sparse} \}$.
\end{Def}
\begin{Def} \label{def2.4} The uniformity of ${\h x}$, $ \kappa(\h x)$ is the ratio between the smallest nonzero absolute entry and the largest one in the sense of absolute value, i.e.
$$0<\kappa(\h x):=\frac{\min_{i\in{\text{supp}}(\h x)}{|x_i|}}{\max_{i\in{\text{supp}}(\h x)}{|x_i|}}\le 1.$$
\end{Def}
\begin{Def}\label{def2.1} We say a proper closed function $h: \mathbb{R}^{n} \rightarrow(-\infty,+\infty]$ satisfies KL property at a point $\hat{\h {x}} \in {\text{dom}}\partial h$ if there exist a constant $\alpha \in(0, \infty],$ a neighborhood $U$ of ${\h{\hat x}}$, and a continuous concave function $\phi:[0, \nu) \rightarrow[0, \infty)$ with $\phi(0)=0$ such that
\begin{itemize}
\item[(i)] $\phi$ is continuously differentiable on $(0, \nu)$ with $\phi^{\prime}>0$ on $(0, \nu);$
\item[(ii)] for every $\mathrm{x} \in U$ with $h(\hat{\mathbf{x}})<h(\mathbf{x})<h(\hat{\mathbf{x}})+\nu,$ it holds that
$\phi^{\prime}(h(\mathbf{x})-h(\hat{\mathbf{x}})) {\text{dist}}(\mathbf{0}, \partial h({\h x})) \geq 1.$
\end{itemize}
\end{Def}
Our analysis on the existence of globally optimal solutions is based on the spherical section property (SSP) \cite{Vavasis,ZengYuPong20}.

\begin{Def}\label{def2.5}
 Let $m,\;n$ be two positive integers such that $m<n$. Let $V$ be an $(n-m)$-dimensional subspace of $\mathbb{R}^{n}$ and $\mu$ be a positive integer. We say that $V$ has the $\mu$-spherical section property if
$
\inf _{\mathbf{v} \in V \backslash\{0\}} \frac{\|\mathbf{v}\|_{1}}{\|\mathbf{v}\|_{2}} \geq \sqrt{\frac{m}{\mu}}.
$
\end{Def}

If $A \in \mathbb{R}^{m \times n}(m<n)$ is a random matrix with i.i.d. standard Gaussian entries, then its nullspace has the $\mu$-spherical section property with high probability \cite{ZengYuPong20}.

Consider  the following problem:
\begin{eqnarray}\label{FBmodel} \min_{{\h x}\in\mathbb R^n} f({\h x})=h({\h x})+g({\h x}), \end{eqnarray}
where $h: {\mathbb R}^n\rightarrow \mathbb R$ is $L$-smooth (possibly, nonconvex) and $g:{\mathbb R}^n\rightarrow {\mathbb R}$ is proper
lower semicontinuous function.
The forward-backward algorithm generates the iterate as follows:
\begin{eqnarray}\label{hg}{\h x}^{k+1}\in{\arg\min}_{\h x}\prox_{\alpha g}({\h x}^k-\alpha\nabla h({\h x}^k)),
\end{eqnarray}
where the step size $\alpha\in(0,1/L)$ to guarantee that any accumulation point of the sequence generated above is a stationary point of
(\ref{FBmodel}) \cite{BLR15,LiPong15}.

\section{Existence of optimal solutions}\label{wellDef}
We explore the conditions to guarantee the existence of global solutions of (\ref{L1o2Con}) and (\ref{L1o2uncon}).
 First, we verify that the solution set  of  (\ref{L1o2Con}) is nonempty if the optimal value of (\ref{L1o2Con})
 is less than that of the auxiliary problem. Second, we prove   that the solution set  of (\ref{L1o2uncon}) is nonempty
 when the objective function value of newly-introduced constrained model  is less than the auxiliary problem.
 Finally, we further show that these sufficient conditions can be guaranteed  by the $\mu$-spherical section property of
 the null space of the measurement matrix.
Our analysis is inspired by \cite{ZengYuPong20} and introduce the following auxiliary problem:
\begin{eqnarray}\label{fdccon}
\begin{array}{ll}
f_{dc}^{*}:=&\inf _{{\h d}\in{\cal F}_0} \displaystyle{\frac{\|\mathbf{d}\|_{1}}{\|\mathbf{d}\|_{2}}} \\[0.2cm]
 \mbox{where}   & {\cal F}_{0}:=\{{\h d}\; \mid \; A \mathbf{d}=\mathbf{0}, \;\mathbf{d} \in {\cal X}, \;{\h d} \neq \mathbf{0}\}.
\end{array}
\end{eqnarray}
For analysis convenience, we denote the optimal value of (\ref{L1o2Con})  as $f_{pc}^{*}$ and $f_{p c}^{*}<+\infty$.
 We recall  $\left\{{\h x}^{k}\right\}$ is a {\it minimizing sequence} of (\ref{L1o2Con}) if ${\h x}^{k} \in {\cal H}$ for each $k$ and
$
\lim _{k \rightarrow \infty} \frac{\left\|{\h x}^{k}\right\|_{1}}{\left\|{\h x}^{k}\right\|_{2}}=f_{pc}^{*}$. Therefore we only need to characterize the existence of unbounded minimizing sequence.
 Before that, we provide a sufficient condition to guarantee the solution set of (\ref{fdccon}) nonempty.

\begin{lemma}\label{lem41} Let $f_{dc}^*$ be defined in (\ref{fdccon}). Assume that ${\cal N}(A) \cap {\cal X} \neq\{{\bf 0}\}$. Then,
$f_{dc}^*<+\infty$ and the solution set of (\ref{fdccon}) is nonempty.  \end{lemma}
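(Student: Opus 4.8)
The plan is to exploit the fact that the objective in (\ref{fdccon}) is positively homogeneous of degree zero, which reduces the problem to minimizing a continuous function over a compact set. For finiteness: the hypothesis ${\cal N}(A)\cap{\cal X}\neq\{{\bf 0}\}$ provides some ${\h d}_0$ with $A{\h d}_0={\bf 0}$, ${\h d}_0\in{\cal X}$, and ${\h d}_0\neq{\bf 0}$, i.e. ${\h d}_0\in{\cal F}_0$; hence ${\cal F}_0$ is nonempty and $f_{dc}^*\le\|{\h d}_0\|_1/\|{\h d}_0\|_2<+\infty$. (Together with $\|{\h d}\|_1\ge\|{\h d}\|_2$ for every ${\h d}$, this already yields $1\le f_{dc}^*<+\infty$.)

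For attainment, I would first note that for any $t>0$ one has $\|t{\h d}\|_1/\|t{\h d}\|_2=\|{\h d}\|_1/\|{\h d}\|_2$, and that since ${\cal X}$ (being either ${\mathbb R}^n$ or ${\mathbb R}^n_+$) is a cone while ${\cal N}(A)$ is a subspace, ${\cal F}_0$ is invariant under multiplication by positive scalars. Therefore, setting
$$
{\cal S}:=\bigl({\cal N}(A)\cap{\cal X}\bigr)\cap\{\,{\h d}\mid\|{\h d}\|_2=1\,\},
$$
every ${\h d}\in{\cal F}_0$ can be replaced by ${\h d}/\|{\h d}\|_2\in{\cal S}$ without changing its objective value, so $f_{dc}^*=\inf_{{\h d}\in{\cal S}}\|{\h d}\|_1$. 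The set ${\cal S}$ is nonempty (take ${\h d}_0/\|{\h d}_0\|_2$, again using that ${\cal X}$ is a cone) and compact, being the intersection of the closed subspace ${\cal N}(A)$, the closed set ${\cal X}$, and the unit sphere. Since ${\h d}\mapsto\|{\h d}\|_1$ is continuous, the Weierstrass extreme value theorem yields some $\bar{\h d}\in{\cal S}$ with $\|\bar{\h d}\|_1=f_{dc}^*$; as $\bar{\h d}\in{\cal F}_0$ and $\|\bar{\h d}\|_2=1$, it attains the infimum in (\ref{fdccon}), so the solution set of (\ref{fdccon}) is nonempty.

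I do not expect a genuine obstacle here: the whole argument rests on the zero-homogeneity of the ratio and on standard compactness. The only points that need a word of care are (a) that ${\cal X}$ is a cone, so that normalizing a nonzero feasible vector preserves feasibility — this is exactly where the structure ${\cal X}\in\{{\mathbb R}^n,{\mathbb R}^n_+\}$ is used; and (b) that restricting to the unit sphere discards no candidate objective value, which is precisely the scale-invariance. The remaining steps are routine closedness/boundedness bookkeeping and continuity of the norm.
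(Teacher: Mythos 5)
Your proposal is correct and takes essentially the same route as the paper: the paper also exploits the scale-invariance of the ratio, normalizes a minimizing sequence onto the unit sphere, and extracts an accumulation point in ${\cal F}_0$, which is just the sequential form of your Weierstrass argument on the compact set ${\cal S}=({\cal N}(A)\cap{\cal X})\cap\{\h d\mid\|\h d\|_2=1\}$. Your phrasing via compactness of ${\cal S}$ and continuity of $\|\cdot\|_1$ is, if anything, a slightly cleaner packaging of the same idea.
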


\begin{proof} First, the feasible set of (\ref{fdccon}) is nonempty due to $\mathcal{N}(A) \cap \mathcal{X} \neq\{\bf 0\}$. The objective function value is lower bounded, i.e., $\frac{\|{\h d}\|_{1}}{\| {\h d}\|_2} \geq 1$.  Suppose that there exists a minimizing sequence $\left\{\tilde{{\h d}}^{k}\right\}$ of (\ref{fdccon}) that is unbounded with $\frac{\left\|\tilde{{\h d}}^{k}\right\|_{1}}{\left\|\tilde{{\h d}}^{k}\right\|_{2}} \rightarrow f_{d c}^{*}$ as $k \rightarrow+\infty$.
Thus, $f_{d c}^{*}<+\infty$. Consequently, by defining ${\h d}^{k}=\frac{\tilde{{\h d}}^{k}}{\|\tilde{{\h d}}^{k}\|_{2}}$,
the sequence $\left\{{\h d}^{k}\right\}$ is satisfying
${\h d}^{k} \in {\cal F}_{0}$ and $\lim _{k \rightarrow+\infty} \frac{\|{\h d}^k\|_{1}}{\|{\h d}^k\|_{2}}=f_{d c}^{*} .$
Since the sequence of $\left\{{\h d}^{k}\right\}$ is bounded,
it has one accumulation point ${\h d}^{*} \in {\cal F}_{0}$.
 Therefore, the solution value of (\ref{fdccon}) is attainable by ${\h d}^{*}$.
\end{proof}
The following proposition shows that the optimal value of (\ref{L1o2Con}) is upper bounded by that of (\ref{fdccon}) when the feasible sets of both (\ref{L1o2Con}) and (\ref{fdccon}) are nonempty.

\begin{proposition} \label{prop42} Suppose that $\mathcal{N}(A) \cap \mathcal{X} \neq\{\bf 0\}$. Then, $f_{pc}^{*} \leq f_{dc}^{*}$. \end{proposition}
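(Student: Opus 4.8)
The plan is to exploit the scale-invariance of the ratio $\|\cdot\|_1/\|\cdot\|_2$ together with a ``translation along a null vector'' argument. By Lemma~\ref{lem41}, the hypothesis $\mathcal N(A)\cap\mathcal X\neq\{\mathbf 0\}$ guarantees that the infimum in (\ref{fdccon}) is attained; so I would fix a minimizer $\mathbf d^\ast\in\mathcal F_0$, that is $A\mathbf d^\ast=\mathbf 0$, $\mathbf d^\ast\in\mathcal X$, $\mathbf d^\ast\neq\mathbf 0$, and $\|\mathbf d^\ast\|_1/\|\mathbf d^\ast\|_2=f_{dc}^\ast$. Since $\mathcal H\neq\phi$ by the standing assumption, I would also fix any $\bar{\mathbf x}\in\mathcal H$.

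Next I would consider the ray $\mathbf x_t:=\bar{\mathbf x}+t\,\mathbf d^\ast$ for $t\ge 0$ and check that it stays feasible for (\ref{L1o2Con}): indeed $A\mathbf x_t=A\bar{\mathbf x}+tA\mathbf d^\ast=\mathbf b$, and $\mathbf x_t\in\mathcal X$ because either $\mathcal X=\mathbb R^n$ (trivial) or $\mathcal X=\mathbb R^n_+$, in which case $\bar{\mathbf x}\ge\mathbf 0$ and $\mathbf d^\ast\ge\mathbf 0$ force $\mathbf x_t\ge\mathbf 0$. Hence $\mathbf x_t\in\mathcal H$, so by definition of $f_{pc}^\ast$ we get $f_{pc}^\ast\le\|\mathbf x_t\|_1/\|\mathbf x_t\|_2$ for every $t\ge 0$ large enough that $\|\mathbf x_t\|_2\neq 0$ (such $t$ exist since $\mathbf d^\ast\neq\mathbf 0$). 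Finally I would let $t\to+\infty$: writing $\|\mathbf x_t\|_1/\|\mathbf x_t\|_2=\|\bar{\mathbf x}/t+\mathbf d^\ast\|_1/\|\bar{\mathbf x}/t+\mathbf d^\ast\|_2$ and using continuity of $\|\cdot\|_1$ and $\|\cdot\|_2$ away from the origin, the right-hand side tends to $\|\mathbf d^\ast\|_1/\|\mathbf d^\ast\|_2=f_{dc}^\ast$, and combining with the inequality above yields $f_{pc}^\ast\le f_{dc}^\ast$.

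There is essentially no hard step here; the two points needing care are (i) that translating by $t\mathbf d^\ast$ keeps the point in $\mathcal X$ in the nonnegative-orthant case — this is precisely where one uses $\mathbf d^\ast\in\mathcal X$ rather than merely $\mathbf d^\ast\in\mathcal N(A)$ — and (ii) that the denominator $\|\mathbf x_t\|_2$ is eventually positive so the ratio is well defined along the tail of the ray. If one prefers not to invoke Lemma~\ref{lem41}, the same argument runs verbatim with $\mathbf d^\ast$ replaced by the terms $\mathbf d^k$ of a minimizing sequence of (\ref{fdccon}) normalized so that $\|\mathbf d^k\|_2=1$, sending $t\to\infty$ first (giving $f_{pc}^\ast\le\|\mathbf d^k\|_1$) and then $k\to\infty$.
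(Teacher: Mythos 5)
Your proposal is correct and follows essentially the same route as the paper: move a feasible point $\bar{\h x}\in\mathcal H$ along a direction in $\mathcal F_0$, note the ray stays in $\mathcal H$, and let the scale tend to infinity so the ratio converges to $\|\h d\|_1/\|\h d\|_2$. The only cosmetic difference is that you fix an attained minimizer $\h d^\ast$ via Lemma~\ref{lem41}, whereas the paper applies the limiting argument to an arbitrary $\hat{\h d}\in\mathcal F_0$ and then takes the infimum over $\hat{\h d}$ — a variant you yourself point out at the end.
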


\begin{proof}
Since ${\cal N}(A)\cap{\cal X}\neq\{\bf 0\}$,  it leads to ${\cal F}_0\neq\phi$.
For any ${\hat {\h x}}\in{\cal H}$ and any ${\hat {\h d}}\in{\cal F}_0$,
we have ${\hat{\h x}}+\tau {\hat{\h d}}\in{\cal H}$ where $\tau>0$.
Thus, it leads to $f_{pc}^*=\inf_{{\h x}\in{\cal H}}\frac{\|\h x\|_1}{\|\h x\|_2}\le \frac{\|{\hat {\h x}}+\tau {\hat{\h d}} \|_1}{\| {\hat {\h x}}+\tau {\hat{\h d}}\|_2}$. Next, we have that $\lim_{\tau\to +\infty}\frac{\|{\hat{\h x}}+\tau {\hat{\h d}}\|_1}{\| {\hat{\h x}}+\tau {\hat{\h d}}\|_2}=\lim_{\tau\to +\infty}\frac{\|{\hat{\h x}}/\tau+{\hat{\h d}}\|_1}{\| {\hat{\h x}}/\tau+ {\hat{\h d}}\|_2}=\frac{\|{\hat{\h d}}\|_1}{\|{\hat{\h d}}\|_2}$.
Consequently, for any ${\hat {\h d}}\in{\cal F}_0$, it yields that $f_{pc}^*\le\frac{\|{\hat{\h d}}\|_1}{\|{\hat{\h d}}\|_2}$. By
taking infimum on both sides of the above inequality with respect to ${\hat{\h d}}$, we have
the desired inequality.
\end{proof}

\begin{theorem}\label{Theo4.3} Assume that ${\cal N}(A)\cap{\cal X} \neq \{\bf 0\}$.  Consider (\ref{L1o2Con}) and (\ref{fdccon}). Then,
$f_{pc}^*=f_{dc}^*$ if and only if there exists a minimizing sequence of (\ref{L1o2Con}) that is unbounded. \end{theorem}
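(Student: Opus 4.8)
The plan is to prove both directions of the equivalence, using Proposition \ref{prop42} (which already gives $f_{pc}^* \leq f_{dc}^*$ under the standing assumption $\mathcal{N}(A)\cap\mathcal{X}\neq\{\mathbf{0}\}$) as the workhorse. For the ``if'' direction, suppose $\{\mathbf{x}^k\}$ is an unbounded minimizing sequence of (\ref{L1o2Con}), so $\mathbf{x}^k\in\mathcal{H}$, $\|\mathbf{x}^k\|_2\to\infty$ (after passing to a subsequence), and $\|\mathbf{x}^k\|_1/\|\mathbf{x}^k\|_2\to f_{pc}^*$. Normalize by setting $\mathbf{d}^k := \mathbf{x}^k/\|\mathbf{x}^k\|_2$; these lie on the unit sphere, so a subsequence converges to some $\mathbf{d}^*$ with $\|\mathbf{d}^*\|_2=1$, hence $\mathbf{d}^*\neq\mathbf{0}$. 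The key points are: (i) $\mathbf{d}^*\in\mathcal{X}$ since $\mathcal{X}$ (either $\mathbb{R}^n_+$ or $\mathbb{R}^n$) is a closed cone and each $\mathbf{d}^k\in\mathcal{X}$; (ii) $A\mathbf{d}^* = \mathbf{0}$, because $A\mathbf{d}^k = (A\mathbf{x}^k)/\|\mathbf{x}^k\|_2 = \mathbf{b}/\|\mathbf{x}^k\|_2 \to \mathbf{0}$ as $\|\mathbf{x}^k\|_2\to\infty$. Thus $\mathbf{d}^*\in\mathcal{F}_0$, and by continuity of $\mathbf{x}\mapsto \|\mathbf{x}\|_1/\|\mathbf{x}\|_2$ away from the origin, $\|\mathbf{d}^*\|_1/\|\mathbf{d}^*\|_2 = \lim_k \|\mathbf{d}^k\|_1/\|\mathbf{d}^k\|_2 = \lim_k \|\mathbf{x}^k\|_1/\|\mathbf{x}^k\|_2 = f_{pc}^*$. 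Hence $f_{dc}^* \leq \|\mathbf{d}^*\|_1/\|\mathbf{d}^*\|_2 = f_{pc}^*$, which combined with Proposition \ref{prop42} yields $f_{pc}^* = f_{dc}^*$.

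For the ``only if'' direction, assume $f_{pc}^* = f_{dc}^*$. By Lemma \ref{lem41} the infimum in (\ref{fdccon}) is attained at some $\mathbf{d}^*\in\mathcal{F}_0$, i.e.\ $A\mathbf{d}^*=\mathbf{0}$, $\mathbf{d}^*\in\mathcal{X}\setminus\{\mathbf{0}\}$, and $\|\mathbf{d}^*\|_1/\|\mathbf{d}^*\|_2 = f_{dc}^* = f_{pc}^*$. Fix any $\hat{\mathbf{x}}\in\mathcal{H}$ (nonempty by the standing assumption $\mathcal{H}\neq\phi$) and set $\mathbf{x}^k := \hat{\mathbf{x}} + k\,\mathbf{d}^*$. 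Then $A\mathbf{x}^k = A\hat{\mathbf{x}} = \mathbf{b}$ and $\mathbf{x}^k\in\mathcal{X}$ (sum of an element of $\mathcal{X}$ and a nonnegative multiple of an element of the cone $\mathcal{X}$), so $\mathbf{x}^k\in\mathcal{H}$. This sequence is unbounded since $\|\mathbf{x}^k\|_2\to\infty$ (as $\mathbf{d}^*\neq\mathbf{0}$), and the computation already performed in the proof of Proposition \ref{prop42} shows $\|\mathbf{x}^k\|_1/\|\mathbf{x}^k\|_2 \to \|\mathbf{d}^*\|_1/\|\mathbf{d}^*\|_2 = f_{pc}^*$. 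Therefore $\{\mathbf{x}^k\}$ is an unbounded minimizing sequence of (\ref{L1o2Con}).

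The step requiring the most care is the ``if'' direction: one must justify that the unboundedness forces $\|\mathbf{x}^k\|_2\to\infty$ along a subsequence (rather than, say, the sequence oscillating), and that the limit $\mathbf{d}^*$ is genuinely nonzero and lies in the closed set $\mathcal{X}$ — both follow from normalizing onto the unit sphere and from $\mathcal{X}$ being a closed cone, but these should be stated explicitly. The continuity argument transferring the limiting ratio from $\{\mathbf{x}^k\}$ to $\mathbf{d}^*$ is where scale-invariance of $\|\cdot\|_1/\|\cdot\|_2$ is used: $\|\mathbf{d}^k\|_1/\|\mathbf{d}^k\|_2 = \|\mathbf{x}^k\|_1/\|\mathbf{x}^k\|_2$ exactly, so no error terms arise. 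I would also remark that the overall equivalence, together with Proposition \ref{prop42}, implies the intended corollary for applications: if $f_{pc}^* < f_{dc}^*$ then every minimizing sequence is bounded, hence (by closedness of $\mathcal{H}$ and lower semicontinuity of the objective) the solution set of (\ref{L1o2Con}) is nonempty.
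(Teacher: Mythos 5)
Your proof is correct, and it follows the standard normalization/recession-direction argument that the paper itself does not spell out (it simply invokes \cite[Lemma 3.3]{ZengYuPong20} for ${\cal X}={\mathbb R}^n$ and declares the nonnegative case analogous). In effect you have supplied the omitted details — using Lemma \ref{lem41} for attainment in (\ref{fdccon}), Proposition \ref{prop42} for the inequality $f_{pc}^*\le f_{dc}^*$, scale invariance of $\|\cdot\|_1/\|\cdot\|_2$, and the fact that ${\cal X}$ is a closed cone so both choices ${\mathbb R}^n$ and ${\mathbb R}^n_+$ are handled uniformly — and the argument is sound as written.
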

\begin{proof} For the case of ${\cal X}={\mathbb R}^{n},$ it has been proved in \cite[Lemma 3.3]{ZengYuPong20} that $f_{p c}^{*}=f_{d c}^{*}$ if and only if there exists a minimizing sequence of (\ref{L1o2Con}) that is unbounded.
The proof for  the case of ${\cal X}=\mathbb{R}_{+}^{n}$ is similar to the case of ${\cal X}=\mathbb{R}^{n}$, and thus omitted here.
\end{proof}

\begin{corollary}\label{coro4.4} Suppose that $\mathcal{N}(A) \cap \mathcal{X} \neq\{\bf 0\}$. Consider (\ref{L1o2Con}) and (\ref{fdccon}).
If $f_{pc}^{*}<f_{d c}^{*}$, the solution set of (\ref{L1o2Con}) is nonempty. \end{corollary}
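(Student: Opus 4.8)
The plan is to deduce this directly from Theorem \ref{Theo4.3} by a boundedness/compactness argument. Since the hypothesis $f_{pc}^{*}<f_{dc}^{*}$ is strict (and by Proposition \ref{prop42} we always have $f_{pc}^{*}\le f_{dc}^{*}$), the equality $f_{pc}^{*}=f_{dc}^{*}$ fails, so the ``if'' direction of Theorem \ref{Theo4.3} rules out the existence of an unbounded minimizing sequence of (\ref{L1o2Con}); equivalently, \emph{every} minimizing sequence of (\ref{L1o2Con}) is bounded. Because $f_{pc}^{*}<+\infty$ and ${\cal H}\neq\phi$, at least one minimizing sequence $\{{\h x}^{k}\}\subseteq{\cal H}$ exists, and by the above it is bounded, hence it admits a subsequence ${\h x}^{k_{j}}\to{\h x}^{*}$.

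Next I would verify that the limit point ${\h x}^{*}$ is a legitimate candidate. The feasible set ${\cal H}=\{{\h x}\in{\cal X}\mid A{\h x}={\h b}\}$ is closed, being the intersection of the closed set ${\cal X}$ (either ${\mathbb R}^{n}_{+}$ or ${\mathbb R}^{n}$) with the affine subspace $\{{\h x}\mid A{\h x}={\h b}\}$; therefore ${\h x}^{*}\in{\cal H}$. Moreover $A{\h x}^{*}={\h b}\neq{\bf 0}$ forces ${\h x}^{*}\neq{\bf 0}$, so the objective $\frac{\|{\h x}\|_{1}}{\|{\h x}\|_{2}}$ is continuous at ${\h x}^{*}$ (it is continuous everywhere except possibly at the origin, where it was defined to equal $1$).

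Finally I would pass to the limit along the subsequence. By continuity of $\frac{\|\cdot\|_{1}}{\|\cdot\|_{2}}$ at the nonzero point ${\h x}^{*}$,
\[
\frac{\|{\h x}^{*}\|_{1}}{\|{\h x}^{*}\|_{2}}=\lim_{j\to\infty}\frac{\|{\h x}^{k_{j}}\|_{1}}{\|{\h x}^{k_{j}}\|_{2}}=f_{pc}^{*},
\]
so ${\h x}^{*}$ attains the infimum defining $f_{pc}^{*}$ in (\ref{L1o2Con}), and the solution set of (\ref{L1o2Con}) is nonempty.

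Since the genuine dichotomy between bounded and unbounded minimizing sequences has already been packaged into Theorem \ref{Theo4.3} (and, for the ${\cal X}={\mathbb R}^{n}$ case, into \cite[Lemma 3.3]{ZengYuPong20}), there is no serious obstacle in this corollary; the only points that deserve a line of justification are the closedness of ${\cal H}$ and the fact that the accumulation point is nonzero, which is what lets us invoke continuity of the ratio rather than merely its lower semicontinuity.
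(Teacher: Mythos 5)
Your proposal is correct and follows essentially the same route as the paper, which simply cites Theorem \ref{Theo4.3} together with Proposition \ref{prop42}: the strict inequality rules out (via Theorem \ref{Theo4.3}) any unbounded minimizing sequence, and the remaining compactness, closedness-of-${\cal H}$, and continuity-at-a-nonzero-limit steps that you spell out are exactly the details the paper leaves implicit.
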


\begin{proof} It follows directly by combining Theorem \ref{Theo4.3}  and Proposition \ref{prop42}. \end{proof}

Next, we analyze the existence of the global solution of the penalized model (\ref{L1o2uncon}). In doing so,
we introduce a constrained problem parameterized by the vector $\h c$:
\begin{eqnarray}\label{paraL1o2}
\begin{array}{ll}
\min_{\h x}&\displaystyle{\frac{\|\h x\|_1}{\|\h x\|_2}}\\[0.2cm]
s.t.       &A{\h x}={\h c}, \;{\h x}\in {\cal X},\\
           \end{array}
\end{eqnarray}
and denote the optimal value of (\ref{paraL1o2}) as $f_{p c}^{*}(\h c)$ and $f^{*}_{p c}(\h c)<+\infty$ for any ${\h c} \in A {\cal X}$.

\begin{theorem} \label{Theo4.5} Suppose that $\mathcal{N}(A) \cap \mathcal{X} \neq\{\bf 0\}$. Consider (\ref{L1o2uncon}) and (\ref{paraL1o2}).  If $f_{pc}^*(\h c)< f_{dc}^*$ where
${\h c}={\text{Proj}}_{A{\cal X}}(\h b)$, the optimal value of (\ref{L1o2uncon}) can be attainable. \end{theorem}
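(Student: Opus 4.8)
\textbf{Proof proposal for Theorem \ref{Theo4.5}.}
The plan is to mimic the argument used for the constrained model (Corollary \ref{coro4.4}), but now tracking how the quadratic data-fidelity term interacts with unbounded minimizing sequences. Let $F^* := \inf_{{\h x} \in {\cal X}} F({\h x})$ and take a minimizing sequence $\{{\h x}^k\} \subseteq {\cal X}$ with $F({\h x}^k) \to F^*$. First I would observe that $F^* < +\infty$: indeed, picking any feasible $\hat{\h x}$ with $A\hat{\h x} = {\h c} = {\text{Proj}}_{A{\cal X}}({\h b})$ (which exists since ${\h c} \in A{\cal X}$) gives $F(\hat{\h x}) = \gamma f_{pc}^*(\h c)\text{-ish} + \frac12\|{\h c}-{\h b}\|^2 < \infty$, and in fact one can take $\hat{\h x}$ to nearly attain $f_{pc}^*(\h c)$. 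If the minimizing sequence $\{{\h x}^k\}$ has a bounded subsequence, then by lower semicontinuity of $F$ over ${\cal X}$ (noting $\|{\h x}\|_1/\|{\h x}\|_2$ is lsc with the convention at ${\bf 0}$, and ${\cal X}$ is closed) an accumulation point attains $F^*$ and we are done. So the real work is to \emph{rule out} the situation where every minimizing sequence is unbounded, i.e. $\|{\h x}^k\|_2 \to +\infty$.

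The key step is to derive a contradiction from an unbounded minimizing sequence using the hypothesis $f_{pc}^*(\h c) < f_{dc}^*$. Suppose $\|{\h x}^k\|_2 \to \infty$. Since $F({\h x}^k)$ stays bounded and the $L_1/L_2$ term is bounded below by $1$, the quadratic term $\frac12\|A{\h x}^k - {\h b}\|_2^2$ must remain bounded, hence $\{A{\h x}^k\}$ is bounded; pass to a subsequence so that $A{\h x}^k \to {\h c}^*$ for some ${\h c}^* \in \overline{A{\cal X}}$. Now normalize: set ${\h d}^k := {\h x}^k / \|{\h x}^k\|_2$, so $\|{\h d}^k\|_2 = 1$, ${\h d}^k \in {\cal X}$ (since ${\cal X}$ is a cone), and $A{\h d}^k = (A{\h x}^k)/\|{\h x}^k\|_2 \to {\bf 0}$. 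Passing to a further subsequence, ${\h d}^k \to {\h d}^*$ with $\|{\h d}^*\|_2 = 1$, ${\h d}^* \in {\cal X}$, $A{\h d}^* = {\bf 0}$, so ${\h d}^* \in {\cal F}_0$ and hence $\|{\h d}^*\|_1/\|{\h d}^*\|_2 \ge f_{dc}^*$. On the other hand, by continuity of the $L_1/L_2$ ratio away from the origin and the fact that $\|{\h x}^k\|_1/\|{\h x}^k\|_2 = \|{\h d}^k\|_1/\|{\h d}^k\|_2 \to \|{\h d}^*\|_1 = \|{\h d}^*\|_1/\|{\h d}^*\|_2 \ge f_{dc}^*$, we get $\liminf_k \gamma \|{\h x}^k\|_1/\|{\h x}^k\|_2 \ge \gamma f_{dc}^*$, so $F^* = \lim_k F({\h x}^k) \ge \gamma f_{dc}^* + \liminf_k \frac12\|A{\h x}^k - {\h b}\|^2 \ge \gamma f_{dc}^* + \frac12 \|{\h c} - {\h b}\|^2$, where the last inequality uses that ${\h c}^* \in \overline{A{\cal X}}$ and ${\h c} = {\text{Proj}}_{A{\cal X}}({\h b})$ is the nearest point of $\overline{A{\cal X}}$ to ${\h b}$ (here I need $A{\cal X}$ closed, or at least that its closure is a cone containing ${\h c}^*$ — this should follow since ${\cal X}$ is a closed convex cone and one can invoke closedness of $A{\cal X}$ as is implicit in the definition of ${\text{Proj}}_{A{\cal X}}$). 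Finally, comparing against the competitor $\hat{\h x}$ with $A\hat{\h x} = {\h c}$ and $\|\hat{\h x}\|_1/\|\hat{\h x}\|_2$ within $\varepsilon$ of $f_{pc}^*(\h c)$ gives $F^* \le \gamma (f_{pc}^*(\h c) + \varepsilon) + \frac12\|{\h c}-{\h b}\|^2$; letting $\varepsilon \downarrow 0$ yields $\gamma f_{dc}^* + \frac12\|{\h c}-{\h b}\|^2 \le F^* \le \gamma f_{pc}^*(\h c) + \frac12\|{\h c}-{\h b}\|^2$, i.e. $f_{dc}^* \le f_{pc}^*(\h c)$, contradicting the hypothesis. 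Hence no minimizing sequence can be unbounded, every minimizing sequence has a bounded subsequence, and $F^*$ is attained.

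\textbf{Main obstacle.} The delicate point — and the reason the authors say the proof is ``much more complicated'' than the constrained case — is the coupling between the decomposition $F = \gamma(L_1/L_2) + \frac12\|A\cdot - {\h b}\|^2$ along an unbounded sequence: one cannot simply normalize because the quadratic term is not scale-invariant, so I must separately argue that the quadratic term stays bounded (to control $A{\h x}^k$) while the ratio term, after normalization, converges to a value bounded below by $f_{dc}^*$, and then glue the two lower bounds together. Making the ``gluing'' rigorous requires care: the limit of the quadratic part is governed by ${\h c}^* = \lim A{\h x}^k$ which need not equal ${\h c} = {\text{Proj}}_{A{\cal X}}({\h b})$, so I must use the projection-minimality $\frac12\|{\h c}^* - {\h b}\|^2 \ge \frac12\|{\h c} - {\h b}\|^2$ and the closedness of $A{\cal X}$ (for ${\cal X} = {\mathbb R}^n_+$ this needs a small justification since the image of a polyhedral cone is polyhedral, hence closed). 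A secondary subtlety is handling the lsc convention $\|{\bf 0}\|_1/\|{\bf 0}\|_2 = 1$ in the bounded case so that an accumulation point equal to ${\bf 0}$ does not cause trouble — but since $F({\bf 0}) = \gamma + \frac12\|{\h b}\|^2$ is a finite value consistent with lsc, this is harmless.
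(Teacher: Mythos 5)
Your proposal is correct, and it takes a genuinely different, more economical route than the paper's. The paper also reduces matters to ruling out (or exploiting) a norm-divergent minimizing sequence, but it does so by constructing a bounded shadow sequence $\hat{\h x}^k$ with $A\hat{\h x}^k=A{\h x}^k$ via Hoffman's error bound and then splitting into cases according to whether the ratio of ${\h x}^k$ eventually drops below that of $\hat{\h x}^k$, and whether ${\h y}^*=\lim_k A{\h x}^k$ lies in $\text{Proj}_{A{\cal X}}({\h b})$: in one branch attainment is obtained from the limit of the shadow sequence, and in the remaining branches a contradiction is reached using \emph{exact} minimizers of (\ref{paraL1o2}) supplied by Corollary \ref{coro4.4}. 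You instead derive a single two-sided estimate: normalizing the divergent sequence yields $F^*\ge\gamma f_{dc}^*+\frac{1}{2}\|{\h c}-{\h b}\|_2^2$ (the fidelity bound needs only $A{\h x}^k\in A{\cal X}$ together with projection-minimality of ${\h c}$ and closedness of $A{\cal X}$, which is automatic for a subspace or a finitely generated cone, as you note), while $\varepsilon$-minimizers of (\ref{paraL1o2}) at ${\h c}$ yield $F^*\le\gamma f_{pc}^*({\h c})+\frac{1}{2}\|{\h c}-{\h b}\|_2^2$; together these force $f_{dc}^*\le f_{pc}^*({\h c})$, contradicting the hypothesis, so every minimizing sequence has a bounded subsequence and lower semicontinuity gives attainment. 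This buys you a shorter argument that dispenses with Hoffman's bound, the case analysis, and even the appeal to Corollary \ref{coro4.4} (approximate minimizers suffice); the paper's shadow-sequence construction would only be needed if one wanted to exhibit a minimizer as a limit of a modified minimizing sequence. Two minor touch-ups: justify $\liminf_k\|{\h x}^k\|_1/\|{\h x}^k\|_2\ge f_{dc}^*$ by running the normalization on a subsequence along which this liminf is attained (your write-up fixes a subsequence first and then asserts the bound for the whole sequence), and note that ${\h c}^*$ is dispensable since $\frac{1}{2}\|A{\h x}^k-{\h b}\|_2^2\ge\frac{1}{2}\|{\h c}-{\h b}\|_2^2$ holds pointwise for every $k$; your handling of a possible ${\bf 0}$ accumulation point in the bounded case is consistent with the paper's convention.
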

\begin{proof} First, define $r({\h x})=\gamma\frac{\|\h x\|_1}{\|\h x\|_2}.$
Assume that $\left\{\mathbf{x}^{k}\right\}$ is a minimizing sequence of (\ref{L1o2uncon}), i.e.,
\begin{eqnarray}\label{Flim}  \lim _{k \rightarrow \infty} F\left({\h x}^k\right)=F^{*}. \end{eqnarray}
If the sequence $\left\{{\h x}^{k}\right\}$ is bounded, then it has a subsequence $\left\{{\h x}^{k_{j}}\right\}$ converging to some ${\h x}^{*}$. Hence, ${\h x}^{*}$ is an optimal solution of (\ref{L1o2uncon}). Otherwise, the sequence $\left\{{\h x}^{k}\right\}$ is unbounded, i.e., $\|\h x^k\|\to\infty$ as $k\to\infty$.
Since the sequence of $\{r(\h x^k)\}$ is bounded below and
$F^*$ is finite, it leads to the sequence of $\{\frac{1}{2}\|A{\h x}^k-{\h b}\|^2_2\}$ is bounded above.
It implies that the subsequence of $\{A{\h x}^k\}$ is bounded. Since $\{A{\h x}^k\}$ is bounded, it has a sequence
that converges to ${\h y}^*$. Without loss of generality, we assume
that $A{\h x}^k\rightarrow {\h y}^*$. Let ${\cal I}=\{j:\;\{x_j^k\}\;\mbox{is bounded}\}$.
Then, it follows that $\{A_{{\cal I}} {\h x}_{\cal I}^k\}$ is bounded.
This together with boundedness of $\{A{\h x}^k\}$ implies that  $\{A_{{\cal I}^c} {\h x}_{{\cal I}^c}^k\}$ is also bounded.
Next, for each $k$, we consider the following linear system,
$ A_{{\cal I}^c} {\h y}^k = A_{{\cal I}^c} {\h x}_{{\cal I}^c}^k,\;{\h y}^k\in{\cal Y},$
where ${\cal Y}:={\mathbb R}^{\sharp({\cal I}^c)}$ if ${\cal X}={\mathbb R}^n$ and ${\cal Y}:={\mathbb R}_+^{\sharp({\cal I}^c)}$ if ${\cal X}={\mathbb R}_+^n$.
Obviously, the solution set of the above linear system is nonempty due to at least one solution $\h y^k = {\h x}^k_{{\cal I}^c}$ for
each $k$. Using Hoffman's Error Bound \cite{Hoffman1952OnAS}, there exist a vector $\h y^k$ satisfying $A_{{\cal I}^c} {\h y}^k = A_{{\cal I}^c} {\h x}_{{\cal I}^c}^k$ and a constant
$\zeta>0$ depending only on $A_{{\cal I}^c}$ such that
\begin{eqnarray*} \|\h y^k\|\le \zeta \|A_{{\cal I}^c} {\h x}_{{\cal I}^c}^k\|.\end{eqnarray*}
By setting ${\hat {\h x}}^k=({\h x}_{{\cal I}}^k,{\h y}^k)$, it leads to
$A{\hat {\h x}^k} =A{\h x}^k$ and ${\hat{\h x}}^k\rightarrow {\h x}^*$ for convenience due to its boundedness of $\{{\hat{\h x}}^k\}$.
Obviously, ${\h y}^*=A{\h x}^*$ thanks to $A{\hat{\h x}}^{k_j} = A{\h x}^{k_j}$. In
the following, we divide into two cases to verify.\\
Case 1. If there exists two subsequences $\{{\h x}^{k_j}\}$ and
$\{{\hat{\h x}^{k_j}}\}$ such that
\begin{eqnarray} \label{Theo4.5:Ca1} \frac{\|{\h x}^{k_j}\|_1}{\| {\h x}^{k_j}\|_2}\ge \frac{\|{\hat{\h x}}^{k_j}\|_1}{\|{\hat{\h x}}^{k_j}\|_2},\;\;\forall \;j. \end{eqnarray}
Since $A{\hat {\h x}}^{k_j} = A{\h x}^{k_j}$, we have that
$F({\h x}^{k_j}) \ge  F({\hat{\h x}}^{k_j}).$
By using ${\hat{\h x}}^{k_j} \rightarrow {\h x}^*$ and $F$ lower semi-continuous, it yields that
$\mathop{\underline{\lim}}\limits_{j\to\infty} F({\hat{\h x}}^{k_j})\ge F({\h x}^*).$
On the other hand,
\begin{eqnarray*}  F^*=\mathop{{\lim}}\limits_{k\to\infty}F({{\h x}}^{k}) = \mathop{{\lim}}\limits_{j\to\infty}F({{\h x}}^{k_j})\ge \mathop{\underline{\lim}}\limits_{j\to\infty} F({\hat{\h x}}^{k_j})
\ge F({\h x}^*).\end{eqnarray*}
Invoking the definition of $F^*$,  it leads to $F({\h x}^*)=F^*$ and ${\h x}^*$ is an optimal solution.\\
Case 2. If there does not exist such two sequences $\{{\h x}^{k_j}\}$ and $\{{\hat{\h x}}^{k_j}\}$ satisfying (\ref{Theo4.5:Ca1}),
it implies there exists an index $K$ such that
$ \frac{\|{\h x}^k\|_1}{\|{\h x}^k\|_2}<\frac{\|{\hat{\h x}}^k\|_1}{\|{\hat{\h x}}^k\|_2},\;\;\forall\; k\ge K. $
Next, we further divide into two cases to verify.

\begin{itemize}
\item[(a)] Suppose that ${\h y}^*\in {\text{Proj}}_{(A{\cal X})}({\h b})$.
Then, the solution set of (\ref{paraL1o2}) with ${\h c}={\h y}^*$ is nonempty due to $f_{pc}^*({\h y}^*)< f_{dc}^*$.
We assume that ${\hat{\h x}}$ is an optimal solution of (\ref{paraL1o2}) with ${\h c}={\h y}^*$.
Since (\ref{Flim}) and $A{\h x}^k\to {\h y}^*$, then
\begin{eqnarray*}\lim_{k\to\infty} \frac{\|{\h x}^k\|_1}{\|{\h x}^k\|_2}=(F^*-\frac{1}{2}\|{\h y}^*-{\h b}\|_2^2)<+\infty.\end{eqnarray*}
Next, we verify that
\begin{eqnarray}\label{Theo4.5:Ca2a1}\lim_{k\to\infty} \frac{\|{\h x}^k\|_1}{\|{\h x}^k\|_2}\le \frac{\|{\hat{\h x}}\|_1}{\|{\hat {\h x}}\|_2} =f_{pc}^*({\h y}^*). \end{eqnarray}
Suppose not, i.e., $\lim_{k\to\infty} \frac{\|{\h x}^k\|_1}{\|{\h x}^k\|_2}> \frac{\|{\hat{\h x}}\|_1}{\|{\hat {\h x}}\|_2}$, it implies that $F({\hat{\h x}})<F^*$ since $A{\hat{\h x}}=A{\h x}^*={\h y}^*$. It contradicts  the definition of $F^*$.
Thus, (\ref{Theo4.5:Ca2a1}) holds.
We define ${\tilde{\h x}^k}:=\frac{{\h x}^k}{\|\h x^k\|_2}$. Taking  $k\to \infty$,
$ A{\h x}^k \rightarrow {\h y}^* \;\Rightarrow\; A{\tilde {\h x}}^k \rightarrow{\bf 0},$
 since $\|{\h x}^k\|_2\to \infty$.
Since ${\tilde {\h x}}^k$ is bounded, it has  a subsequence ${\tilde {\h x}}^{k_j}\rightarrow {\bar{\h x}}$ where ${\bar{\h x}}$ satisfies  ${\bar{\h x}}\in{\cal F}_0$.
\begin{eqnarray}\label{Theo4.5:Ca2a2}\mathop{\lim}\limits_{k\to\infty}\frac{\|{\h x}^k\|_1}{\|{\h x}^k\|_2}=\frac{\|{\bar{\h x}}\|_1}{\|{\bar{\h x}}\|_2}\ge f_{dc}^*>f_{pc}^*({\h y}^*), \end{eqnarray}
where the first inequality is due to ${\bar{\h x}}\in{\cal F}_0$ and the last inequality follows
from ${\h y}^*\in {\text{Proj}}_{A{\cal X}}(\h b)$. Note that the above inequality contradicts  (\ref{Theo4.5:Ca2a1}).
Hence, this case cannot happen.

\item[(b)] Suppose that ${\h y}^*\not\in{\text{Proj}}_{A{\cal X}}(\h b)$. Then,
we choose one vector ${\hat {\h y}}\in {\text{Proj}}_{A{\cal X}}({\h b})$.
Consider the constrained problem (\ref{paraL1o2})  with ${\h c}:={\hat{\h y}}$.
Since $f^*_{pc}({\hat{\h y}})<f_{dc}^*$,
the solution set of (\ref{paraL1o2}) with ${\h c}:={\hat{\h y}}$
is nonempty due to Corollary \ref{coro4.4}. We assume that $\breve{{\h x}}$ is an optimal solution of (\ref{paraL1o2}) with ${\h c}:={\hat{\h y}}$.
    Similar to case (a), one can derive a version of (\ref{Theo4.5:Ca2a2}) as follows
      \begin{eqnarray} \label{Theo4.5:Ca2a3} \mathop{\lim}\limits_{k\to\infty} \gamma\frac{\|\h x^k\|_1}{\|\h x^k\|_2}=\gamma\frac{\|{\bar{\h x}}\|_1}{\|{\bar{\h x}}\|_2}\ge \gamma f_{dc}^*, \end{eqnarray}
      where the definition of  ${\bar{\h x}}$ is the same as Case (a).
      By noting ${\hat{\h y}}\in{\text{Proj}}_{A{\cal X}}(\h b)$ and
      ${\h y}^*\in A{\cal X}$ since $A{\h x}^k\in A{\cal X}$ and $A{\h x}^k \to {\h y}^*$,
      it leads to $\frac{1}{2}\|{\hat{\h y}}-{\h b}\|^2\le \frac{1}{2}\|{\h y}^*-{\h b}\|^2.$
      Thus, $\frac{1}{2}\|A\breve{\h x}-{\h b}\|^2\le \frac{1}{2}\|{\h y}^*-{\h b}\|^2$.
      By noting $r(\breve{\h x})=\gamma f_{pc}^*({\hat{\h y}})<\gamma f_{dc}^*$,
      and combining with the above inequality, it yields that
     \begin{eqnarray} \label{xxtmp} F(\breve{\h x})<\gamma f_{dc}^*+\frac{1}{2}\|{\h y}^*-{\h b}\|^2.\end{eqnarray}
      On the other hand, combining  (\ref{Theo4.5:Ca2a3}) with the fact of $A{\h x}^k \rightarrow {\h y}^*$, we have
      \begin{eqnarray*} &&\gamma f_{dc}^* +\frac{1}{2}\|{\h y}^*-{\h b}\|^2\le\mathop{\lim}\limits_{k\to\infty}\left( \gamma\frac{\|\h x^k\|_1}{\|\h x^k\|_2}+\frac{1}{2}\|A{\h x}^k-{\h b}\|_2^2\right)=F^*.\end{eqnarray*}
     \noindent In view of (\ref{xxtmp}) and the above inequality,  it leads to $F(\breve{\h x})<F^*$ which
      contradicts the definition of $F^*$.
\end{itemize}
Thus, the sequence $\{{\h x}^k\}$ is bounded, and thus it has an accumulation point ${\h x}^*$ which
is an optimal solution of (\ref{L1o2uncon}).
\end{proof}
Next, we present the theorem on  the existence of global solutions of (\ref{L1o2Con}) and (\ref{L1o2uncon}).
The proof follows the line of arguments as in \cite[Theorem 3.4]{ZengYuPong20}, thus  omitted  here.

\begin{theorem} \label{theo4.6}
Consider (\ref{L1o2Con}) and (\ref{L1o2uncon}). Suppose that ${\cal N}(A)$ has the $\mu$-spherical section property for some
$\mu>0$. Then,  the following assertions hold:
\begin{itemize}
\item[(i)] If there exists ${\hat{\h x}}\in{\mathbb R}^n$ such that $\|\hat{\h x}\|_0<m/\mu$, ${\hat{\h x}}\in {\cal X}$ and $A{\hat{\h x}}={\h b}$, then
the set of optimal solution of (\ref{L1o2Con}) is nonempty.
\item[(ii)] If there exists ${\hat{\h x}}\in\mathbb R^n$ such that $\|\hat{\h x}\|_0<m/\mu$, ${\hat{\h x}}\in{\cal X}$, and $A{\hat{\h x}}={\h c}$
where ${\h c}={\text{Proj}}_{A{\cal X}} (\h b)$, then the set of optimal solutions of (\ref{L1o2uncon}) is nonempty.
\end{itemize}  \end{theorem}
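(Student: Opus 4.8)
The plan is to deduce both parts from the sufficient conditions already in hand: Corollary~\ref{coro4.4} for (\ref{L1o2Con}) and Theorem~\ref{Theo4.5} for (\ref{L1o2uncon}). Each of those requires $\mathcal N(A)\cap\mathcal X\ne\{\mathbf 0\}$ together with a strict inequality between the relevant primal value and $f_{dc}^{*}$; establishing that strict inequality is the only substantive point, and I would get it by sandwiching a lower bound for $f_{dc}^{*}$ against an upper bound coming from the prescribed sparse feasible vector.

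For the lower bound, note that the feasible set of (\ref{fdccon}) satisfies $\mathcal F_0\subseteq\mathcal N(A)\setminus\{\mathbf 0\}$, so, invoking the $\mu$-spherical section property of $\mathcal N(A)$ (Definition~\ref{def2.5}),
\[ f_{dc}^{*}=\inf_{\h d\in\mathcal F_0}\frac{\|\h d\|_1}{\|\h d\|_2}\;\ge\;\inf_{\h v\in\mathcal N(A)\setminus\{\mathbf 0\}}\frac{\|\h v\|_1}{\|\h v\|_2}\;\ge\;\sqrt{m/\mu}, \]
and if $\mathcal F_0=\emptyset$ this holds trivially with $f_{dc}^{*}=+\infty$. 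For the upper bound I would use the elementary estimate $\|\h v\|_1\le\sqrt{\|\h v\|_0}\,\|\h v\|_2$. Assume first that $\mathcal N(A)\cap\mathcal X\ne\{\mathbf 0\}$. In case~(i) the hypothesis furnishes $\hat{\h x}\in\mathcal H$ with $\hat{\h x}\ne\mathbf 0$ (since $A\hat{\h x}=\h b\ne\mathbf 0$), whence $f_{pc}^{*}\le\|\hat{\h x}\|_1/\|\hat{\h x}\|_2\le\sqrt{\|\hat{\h x}\|_0}<\sqrt{m/\mu}\le f_{dc}^{*}$, and Corollary~\ref{coro4.4} gives that the solution set of (\ref{L1o2Con}) is nonempty. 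In case~(ii) the same $\hat{\h x}$ is feasible for (\ref{paraL1o2}) with $\h c=\mathrm{Proj}_{A\mathcal X}(\h b)$, so $f_{pc}^{*}(\h c)\le\sqrt{\|\hat{\h x}\|_0}<\sqrt{m/\mu}\le f_{dc}^{*}$, and Theorem~\ref{Theo4.5} gives that the infimum in (\ref{L1o2uncon}) is attained.

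It remains to handle the degenerate configurations, which I expect to be the only delicate point since the substance of the result is imported from the earlier theorems. If $\mathcal N(A)\cap\mathcal X=\{\mathbf 0\}$ (possible only for $\mathcal X=\mathbb R^n_+$, since $n>m$ forces $\mathcal N(A)\ne\{\mathbf 0\}$), then $\mathcal H$ is a nonempty polyhedron with trivial recession cone, hence compact, and since $\mathbf 0\notin\mathcal H$ the continuous ratio attains its minimum on $\mathcal H$; likewise $\|A\h x\|\to\infty$ along every unbounded sequence in $\mathbb R^n_+$, so $F$ is coercive and lower semicontinuous, hence attains its minimum, which settles both (i) and (ii). The remaining degeneracy is $\hat{\h x}=\mathbf 0$: in (i) this is excluded by $\h b\ne\mathbf 0$, whereas in (ii) it forces $\mathrm{Proj}_{A\mathcal X}(\h b)=\mathbf 0$, hence $\langle\h b,A\h x\rangle\le 0$ for every $\h x\in\mathcal X$; then $\|A\h x-\h b\|^2\ge\|A\h x\|^2+\|\h b\|^2$ and a one-line estimate (recall $\|\mathbf 0\|_1/\|\mathbf 0\|_2:=1$) gives $F(\mathbf 0)\le F(\h x)$ for all $\h x\in\mathcal X$, so $\mathbf 0$ is itself a global minimizer. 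Aside from this bookkeeping the argument parallels \cite[Theorem~3.4]{ZengYuPong20}, the only new ingredient being the nonnegativity-aware auxiliary problems (\ref{fdccon}) and (\ref{paraL1o2}) already developed in Lemma~\ref{lem41}, Corollary~\ref{coro4.4} and Theorem~\ref{Theo4.5}.
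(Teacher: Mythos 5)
Your argument is correct and is essentially the proof the paper has in mind (it defers to the line of reasoning of \cite[Theorem 3.4]{ZengYuPong20}): the $\mu$-spherical section property gives $f_{dc}^*\ge\sqrt{m/\mu}$, the Cauchy--Schwarz bound $\|\hat{\h x}\|_1\le\sqrt{\|\hat{\h x}\|_0}\,\|\hat{\h x}\|_2$ gives the strict upper bound, and then Corollary \ref{coro4.4} and Theorem \ref{Theo4.5} yield attainment. Your extra bookkeeping for the degenerate situations ${\cal N}(A)\cap{\cal X}=\{{\bf 0}\}$ and $\hat{\h x}={\bf 0}$ (relevant only in the nonnegative setting) is a sound and welcome addition that the paper leaves implicit.
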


Next, we consider   how to guarantee ${\bf 0}$  not being  a globally optimal solutions for (\ref{L1o2Con}) and (\ref{L1o2uncon}). In view of ${\h b}\neq {\bf 0}$, ${\bf 0}$ cannot be a globally optimal solution of the constrained model (\ref{L1o2Con}).
In Theorem \ref{theo4.7}, we provide sufficient conditions to guarantee that ${\bf 0}$ cannot be a globally optimal solution of (\ref{L1o2uncon}).

\begin{theorem}
\label{theo4.7}
Suppose that one of the following assumptions holds:
\begin{itemize}
\item[(i)] ${\h b}\in A{\cal X}$ and $0<\gamma<\frac{\|{\h b}\|_2^2}{2(\sqrt{n}-1)}$;
\item[(ii)] There exists a vector ${\hat {\h x}}\in{\cal X}$ such that $\|A{\hat{\h x}}-{\h b}\|_2\le\varepsilon\;(\varepsilon \ll \|{\h b}\|_2)$
and $0<\gamma<\frac{\|{\h b}\|_2^2 - \varepsilon^2}{2(\sqrt{n}-1)}$.
\end{itemize}
Then, the optimal solution of (\ref{L1o2uncon}) cannot be ${\bf 0}$.
\end{theorem}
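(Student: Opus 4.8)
The plan is to show $\h 0$ is not a global minimizer by exhibiting, under either hypothesis, a point $\h x\in{\cal X}$ with $F(\h x)<F(\h 0)$. The first step is simply to record $F(\h 0)$: using the convention $\frac{\|\h 0\|_1}{\|\h 0\|_2}=1$, we have $F(\h 0)=\gamma+\half\|{\h b}\|_2^2$. The only auxiliary fact needed is the elementary Cauchy--Schwarz bound $\frac{\|{\h x}\|_1}{\|{\h x}\|_2}\le\sqrt{n}$, valid for every ${\h x}\neq\h 0$, which controls the fractional term of $F$ uniformly by $\gamma\sqrt n$. The whole argument then reduces to noting that the two stated upper bounds on $\gamma$ are exactly what make the gap $F(\h 0)-\gamma\sqrt n=\half\|{\h b}\|_2^2-\gamma(\sqrt n-1)$ (resp.\ $\half(\|{\h b}\|_2^2-\varepsilon^2)-\gamma(\sqrt n-1)$) strictly positive.

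For (i): since ${\h b}\in A{\cal X}$, choose ${\hat{\h x}}\in{\cal X}$ with $A{\hat{\h x}}={\h b}$; it is nonzero because ${\h b}\neq\h 0$. Then $F({\hat{\h x}})=\gamma\frac{\|{\hat{\h x}}\|_1}{\|{\hat{\h x}}\|_2}\le\gamma\sqrt n<\gamma+\half\|{\h b}\|_2^2=F(\h 0)$, the strict step invoking $\gamma<\frac{\|{\h b}\|_2^2}{2(\sqrt n-1)}$. For (ii): take the assumed ${\hat{\h x}}\in{\cal X}$ with $\|A{\hat{\h x}}-{\h b}\|_2\le\varepsilon$; it is again nonzero, for ${\hat{\h x}}=\h 0$ would force $\|{\h b}\|_2\le\varepsilon$, contradicting $\varepsilon\ll\|{\h b}\|_2$. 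Hence $F({\hat{\h x}})\le\gamma\sqrt n+\half\varepsilon^2$, and $\gamma<\frac{\|{\h b}\|_2^2-\varepsilon^2}{2(\sqrt n-1)}$ rearranges precisely to $\gamma\sqrt n+\half\varepsilon^2<\gamma+\half\|{\h b}\|_2^2=F(\h 0)$. In both cases $F(\h 0)$ strictly exceeds the objective value at a feasible point, so $\h 0$ is not optimal; note that (i) is simply the $\varepsilon=0$ instance of (ii).

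There is no genuinely hard step: the result is a two-line estimate once the right candidate is chosen. The only points requiring a moment of care are ensuring the candidate is nonzero so that $F$ is evaluated where the fractional term obeys the $\sqrt n$ bound; the tacit assumption $n\ge 2$ so that $\sqrt n-1>0$ and the $\gamma$-intervals in (i)--(ii) are nonempty; and recognizing that the constant $\sqrt n$ in $\frac{\|{\h x}\|_1}{\|{\h x}\|_2}\le\sqrt n$ is tight, which is what renders the stated thresholds on $\gamma$ the natural (essentially sharp) ones for this argument.
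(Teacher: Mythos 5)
Your proposal is correct and follows essentially the same route as the paper: evaluate $F(\mathbf{0})=\gamma+\tfrac{1}{2}\|{\h b}\|_2^2$ via the convention $\frac{\|\mathbf{0}\|_1}{\|\mathbf{0}\|_2}=1$, bound the candidate feasible point's objective by $\gamma\sqrt{n}$ (plus $\tfrac{1}{2}\varepsilon^2$ in case (ii)) using $\|{\h x}\|_1/\|{\h x}\|_2\le\sqrt{n}$, and observe that the stated thresholds on $\gamma$ give exactly the strict comparison $F({\hat{\h x}})<F(\mathbf{0})$. Your treatment is in fact slightly more complete, since you write out case (ii) (which the paper omits as ``similar'') and explicitly check that the candidate point is nonzero.
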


\begin{proof} (i) We use contradiction to show it. Suppose that ${\bf 0}$ is a global solution of (\ref{L1o2uncon}).
 Since ${\h b}\in A{\cal X}$, we choose a vector ${\tilde {\h x}}\in{\cal X}$ such that $A{\tilde {\h x}}={\h b}$.
Then, it leads to $\gamma\sqrt{n}<\gamma +\frac{1}{2}\|\h b\|^2$ which
implies that $F({\tilde {\h x}})<F({\bf 0})$. It contradicts to ${\bf 0}$ being a global
solution of (\ref{L1o2uncon}). (ii) The proof is similar to (i), thus omitted here.
\end{proof}
\begin{remark}
The assumptions of (i) and (ii) in Theorem \ref{theo4.7} correspond to
the cases of noiseless and noisy observations, respectively.
\end{remark}

\section{Sparse property}\label{RecoveryT}

We  demonstrate  the sparsity of the local minimizers of (\ref{L1o2Con}) and (\ref{L1o2uncon})
in the sense
that minimizing $L_1/L_2$ or $(L_1/L_2)^+$ only extract linearly independent columns from the
sensing matrix $A$. With this, we provide a much more easily checkable exact recovery condition than \cite[Theorem III.2]{YEX14}  for the constrained model
both for arbitrary and nonnegative signals.
\begin{theorem} \label{theo4.9} Let ${\h x}^*$ $({\h x}^*\neq{\bf 0})$ be a local minimizer of the constrained problem (\ref{L1o2Con})
and $\Lambda^*={\text{supp}}(\h x^*)$. Then,
$(A_{{\Lambda}^*})^\top  (A_{\Lambda^*}) \succ {\bf 0}.$
\end{theorem}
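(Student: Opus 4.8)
The plan is to argue by contradiction. Suppose the columns of $A$ indexed by $\Lambda^*$ are linearly dependent; then there is a nonzero $\h h\in\R^n$ with $\supp(\h h)\subseteq\Lambda^*$ and $A\h h=\h 0$ (take a nonzero element of $\mathcal N(A_{\Lambda^*})$ and extend it by zeros outside $\Lambda^*$). First I would check that $\h x^*+t\h h$ stays feasible for all small $|t|$: the affine constraint is preserved since $A(\h x^*+t\h h)=A\h x^*=\h b$, and when $\mathcal X=\R^n_+$ the components of $\h x^*$ on $\Lambda^*$ are strictly positive, so $\h x^*+t\h h\in\R^n_+$ for $|t|$ small. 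Moreover, because $x_i^*\neq 0$ for $i\in\Lambda^*$ and $h_i=0$ for $i\notin\Lambda^*$, we have $\supp(\h x^*+t\h h)=\Lambda^*$ for such $t$; in particular $\h x^*+t\h h\neq\h 0$, so the convention $\frac{\|\h 0\|_1}{\|\h 0\|_2}=1$ never intervenes.

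The next step exploits that the numerator of the ratio is affine along this segment: for $|t|$ small, $\sign(x_i^*+th_i)=\sign(x_i^*)$ for every $i\in\Lambda^*$, hence $\|\h x^*+t\h h\|_1=\sum_{i\in\Lambda^*}\sign(x_i^*)(x_i^*+th_i)=\|\h x^*\|_1+tc$ with $c:=\sum_{i\in\Lambda^*}\sign(x_i^*)h_i$, while $\|\h x^*+t\h h\|_2^2=\|\h x^*\|_2^2+2t\langle\h x^*,\h h\rangle+t^2\|\h h\|_2^2$. Writing $a:=\|\h x^*\|_1>0$, $b:=\|\h x^*\|_2^2>0$, $p:=\langle\h x^*,\h h\rangle$, $q:=\|\h h\|_2^2>0$, the square of the objective along the segment is the rational function $\psi(t)=\frac{(a+tc)^2}{b+2pt+qt^2}$. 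It then suffices to show that $t=0$ is not a local minimizer of $\psi$ (equivalently of $\sqrt{\psi}$), since that contradicts the local minimality of $\h x^*$.

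I would finish with a first/second-order test at $t=0$. A direct computation gives $\psi'(0)=\frac{2a(cb-ap)}{b^2}$, so if $cb\neq ap$ then $\psi'(0)\neq 0$ and $t=0$ is not a local minimizer. In the remaining case $cb=ap$, one has $\psi'(0)=0$ and $\psi''(0)=\frac{2(bc^2-a^2q)}{b^2}$; substituting $c=ap/b$ gives $bc^2-a^2q=\frac{a^2}{b}\big(\langle\h x^*,\h h\rangle^2-\|\h x^*\|_2^2\|\h h\|_2^2\big)<0$ by the Cauchy--Schwarz inequality, which is strict here because $\h x^*$ and $\h h$ cannot be parallel (if $\h x^*=\lambda\h h$ then $\h b=A\h x^*=\lambda A\h h=\h 0$, impossible). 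Thus $\psi''(0)<0$, so $t=0$ is even a strict local maximizer. In either case there are arbitrarily small $t$ with $\frac{\|\h x^*+t\h h\|_1}{\|\h x^*+t\h h\|_2}<\frac{\|\h x^*\|_1}{\|\h x^*\|_2}$ and $\h x^*+t\h h\in\mathcal H$, contradicting the local minimality of $\h x^*$ for (\ref{L1o2Con}). Therefore $A_{\Lambda^*}$ has full column rank, i.e.\ $(A_{\Lambda^*})^\top A_{\Lambda^*}\succ\h 0$.

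The routine parts are the feasibility check and the derivative formulas; the one delicate point is the degenerate case $\psi'(0)=0$, where the crucial observation is that the equality $cb=ap$ makes the Cauchy--Schwarz bound $\langle\h x^*,\h h\rangle^2\le\|\h x^*\|_2^2\|\h h\|_2^2$ enter in its strict form and force $\psi''(0)<0$. (Equivalently, one can argue geometrically: by scale invariance the ratio restricted to the $2$-plane $\mathrm{span}\{\h x^*,\h h\}$ is, near $\h x^*/\|\h x^*\|_2$ on the unit circle, a sinusoid $\rho\cos(\theta-\theta_0)$ with amplitude $\rho\ge\frac{\|\h x^*\|_1}{\|\h x^*\|_2}>0$; the direction of $\h x^*$ cannot be the sinusoid's unique local minimizer $\theta_0+\pi$, since that would force $\frac{\|\h x^*\|_1}{\|\h x^*\|_2}=-\rho<0$.)
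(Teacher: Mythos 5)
Your proof is correct, and it starts exactly where the paper does: assuming $A_{\Lambda^*}$ has linearly dependent columns, you build a nonzero $\h h\in\mathcal N(A)$ with $\supp(\h h)\subseteq\Lambda^*$ and perturb $\h x^*$ along it, noting that feasibility (including nonnegativity when ${\cal X}=\mathbb R^n_+$) and the sign pattern are preserved for small $|t|$. Where you diverge is in how the contradiction is extracted. The paper compares the two symmetric points $\h x^*\pm\h v$: the $\ell_1$ norm averages exactly, the $\ell_2$ norm strictly sub-averages (strict triangle inequality, since $\h x^*+\h v$ and $\h x^*-\h v$ cannot be proportional when $\h b\neq\bf 0$), and a mediant-type inequality then forces $\frac{\|\h x^*\|_1}{\|\h x^*\|_2}>\min\bigl\{\frac{\|\h x^*+\h v\|_1}{\|\h x^*+\h v\|_2},\frac{\|\h x^*-\h v\|_1}{\|\h x^*-\h v\|_2}\bigr\}$. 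You instead run a first/second-order test on the squared ratio $\psi(t)=\frac{(a+tc)^2}{b+2pt+qt^2}$ along the segment, and in the degenerate case $\psi'(0)=0$ you use strict Cauchy--Schwarz (strict because $\h x^*$ parallel to $\h h$ would force $\h b=\bf 0$) to get $\psi''(0)<0$, so $t=0$ is a strict local maximizer; the paper's non-proportionality argument plays the same role as your strict Cauchy--Schwarz step. Both arguments are short; the paper's is purely algebraic and avoids derivatives, while yours is arguably more systematic (it shows the ratio restricted to any such feasible line never has an interior local minimum off the degenerate direction), treats ${\cal X}=\mathbb R^n$ and ${\cal X}=\mathbb R^n_+$ uniformly rather than by an omitted "similar" case, and even yields the stronger conclusion that $t=0$ is a strict local maximizer when $\psi'(0)=0$.
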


\begin{proof}
We divide into two cases to verify.\\
Case 1. ${\cal X}={\mathbb R}^n$. Let ${\h x}^*$ be a local minimizer of the constrained model (\ref{L1o2Con}).
We use contradiction. Suppose the columns of $A_{\Lambda^*}$ are linearly dependent; then
there exists ${\bf v}\neq{\bf 0}$ and ${\h v}\in{\cal N}(A)$ such that ${\text{supp}}(\h v)\subseteq \Lambda^*$.
For any fixed neighborhood ${\cal B}_r(\h x^*)$ of ${\h x}^*$, we scale $\h v$ so that
$ \|\h v\|_2<\min\{\mathop{\min}\limits_{i\in{{\Lambda}^*}}|x_i^*|,\;r\}. $
Consider two feasible vectors in ${\cal B}_r({\h x}^*)$, ${\hat{\h x}}={\h x}^*+{\h v}$ and $\check{\h x}={\h x}^*-{\h v}$.
Since ${\text{supp}}(\h v)\subseteq \Lambda^*$, we have ${\text{supp}}({\hat {\h x}})\subseteq {\Lambda}^*$ and
${\text{supp}}(\check{\h x})\subseteq {\Lambda}^*$. Since for any $i\in{\Lambda}^*$,
$|x_i^*|\pm {\text{sign}}(x_i^*) v_i\ge \mathop{\min}\limits_{i\in{\Lambda}^*}|x_i^*|-\|\h v\|_2>0,\;\forall\; i\in{\Lambda}^*$.
Thus,  $({\h x}^*\pm{\h v})_i={\text{sign}}(x_i^*) (|x_i^*|\pm {\text{sign}}(x_i^*) v_i)$ for any  $i\in\Lambda^*.$
It implies that ${\h x}^*$, ${\hat{\h x}}$ and $\check{\h x}$ are located in the same octant.
Consequently,
$ \|\h x^*\|_1 = \frac{1}{2}\left( \| {\hat {\h x}}\|_1 + \| \check{\h x }\|_1 \right),\;\|\h x^*\|_2 <\frac{1}{2}\left( \| {\hat {\h x}}\|_2 + \| \check{\h x }\|_2 \right)$.
Suppose not. Then,
there exists a positive scalar $\kappa$($\neq 1$) such that $\check{\h x }=\kappa {\hat {\h x}}$ which contradicts
the facts of $A{\hat{\h x}}={\h b}$ and $A{\h {\check x}}={\h b}$.
Finally, it yields that
$\frac{\|\h x^*\|_1}{\|\h x^*\|_2} >\min\left\{ \frac{\|{\hat{\h x}}\|_1}{\|{\hat{\h x}}\|_2}, \frac{\|{\check{\h x}}\|_1}{\|{\check{\h x}}\|_2}\right\},$
which contradicts the fact that ${\h x}^*$ is a local minimizer.\\
Case 2. ${\cal X}={\mathbb R}_+^n$. The  proof is similar to Case 1, thus
 omitted  here.
\end{proof}

Next, we show that the conclusion of Theorem \ref{theo4.9} also holds for the unconstrained model.

\begin{theorem} \label{theor4.10}
Let ${\h x}^*$ be a local minimizer of the unconstrained problem (\ref{L1o2uncon}) and
${\Lambda^*}={\text{supp}}({\h x}^*)$. Then,
$  (A_{\Lambda^*})^\top (A_{\Lambda^*})\succ{\bf 0}.$ \end{theorem}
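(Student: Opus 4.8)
The plan is to mimic the structure of the proof of Theorem~\ref{theo4.9}, but now working with the smooth data-fidelity term $\frac12\|A\h x-\h b\|_2^2$ present in $F$. Suppose, for contradiction, that ${\h x}^*\neq{\bf 0}$ is a local minimizer of (\ref{L1o2uncon}) whose restricted columns $A_{\Lambda^*}$ are linearly dependent. Then there exists ${\h v}\neq{\bf 0}$ with $A{\h v}={\bf 0}$ and ${\text{supp}}(\h v)\subseteq\Lambda^*$. As before, I would rescale $\h v$ so that $\|\h v\|_2<\min\{\min_{i\in\Lambda^*}|x_i^*|,\,r\}$ for a fixed neighborhood ${\cal B}_r({\h x}^*)$; this guarantees that ${\hat{\h x}}:={\h x}^*+{\h v}$ and ${\check{\h x}}:={\h x}^*-{\h v}$ lie in ${\cal B}_r({\h x}^*)$, share the support $\Lambda^*$, and sit in the same octant as ${\h x}^*$. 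The key point I would now exploit is that, because $A{\h v}={\bf 0}$, we have $A{\hat{\h x}}=A{\check{\h x}}=A{\h x}^*$, so the fidelity terms coincide: $\frac12\|A{\hat{\h x}}-\h b\|_2^2=\frac12\|A{\check{\h x}}-\h b\|_2^2=\frac12\|A{\h x}^*-\h b\|_2^2$. Hence comparing $F$ at these three points reduces exactly to comparing $\frac{\|\cdot\|_1}{\|\cdot\|_2}$, and the contradiction follows from the same strict convexity-type inequality as in Theorem~\ref{theo4.9}.

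More precisely, the octant argument gives $\|{\h x}^*\|_1=\frac12(\|{\hat{\h x}}\|_1+\|{\check{\h x}}\|_1)$ and, since ${\hat{\h x}}$ and ${\check{\h x}}$ are not positive multiples of each other (otherwise $A{\hat{\h x}}=A{\check{\h x}}$ would force ${\h v}={\bf 0}$ up to the scaling, contradicting ${\h v}\neq{\bf 0}$), strict convexity of $\|\cdot\|_2$ yields $\|{\h x}^*\|_2<\frac12(\|{\hat{\h x}}\|_2+\|{\check{\h x}}\|_2)$. Combining, $\frac{\|{\h x}^*\|_1}{\|{\h x}^*\|_2}>\min\{\frac{\|{\hat{\h x}}\|_1}{\|{\hat{\h x}}\|_2},\frac{\|{\check{\h x}}\|_1}{\|{\check{\h x}}\|_2}\}$, and adding the common fidelity term to all three quantities preserves the strict inequality: $F({\h x}^*)>\min\{F({\hat{\h x}}),F({\check{\h x}})\}$. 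Since both ${\hat{\h x}}$ and ${\check{\h x}}$ lie in ${\cal B}_r({\h x}^*)\cap{\cal X}$, this contradicts ${\h x}^*$ being a local minimizer. As in Theorem~\ref{theo4.9}, I would handle ${\cal X}={\mathbb R}^n$ and ${\cal X}={\mathbb R}^n_+$ together, noting that in the nonnegative case the octant containing ${\h x}^*$ is the nonnegative one restricted to $\Lambda^*$, so ${\hat{\h x}},{\check{\h x}}\in{\mathbb R}^n_+$ automatically.

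The only genuinely new wrinkle compared with Theorem~\ref{theo4.9} is the presence of the fidelity term, and the observation that $A{\h v}={\bf 0}$ makes it cancel is what makes the argument go through unchanged; I do not anticipate a real obstacle there. The one place requiring a little care is the claim that ${\hat{\h x}}$ is not a positive scalar multiple of ${\check{\h x}}$ — one must rule out ${\check{\h x}}=\kappa{\hat{\h x}}$ with $\kappa>0$. If $\kappa=1$ then ${\h v}={\bf 0}$, contradiction; if $\kappa\neq1$ then applying $A$ gives $A{\h x}^*=\kappa A{\h x}^*$, hence $A{\h x}^*={\bf 0}$, i.e. ${\h b}=A{\h x}^*-(A{\h x}^*-{\h b})$... here one uses instead that ${\hat{\h x}}-{\check{\h x}}=2{\h v}\neq{\bf 0}$ while ${\hat{\h x}}-{\check{\h x}}=(1-\kappa){\hat{\h x}}$ would force ${\hat{\h x}}$ and hence ${\h x}^*$ to have support contained in $\{i:v_i\neq0\}$; combined with $\kappa\neq1$ and ${\hat{\h x}}\neq{\bf 0}$ one derives $|x_i^*+v_i|$ and $|x_i^*-v_i|$ proportional with a single ratio, which is incompatible with $\|\h v\|_2>0$ unless the scaling collapses. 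This is exactly the sub-step already dispatched in Theorem~\ref{theo4.9} (``Suppose not. Then there exists a positive scalar $\kappa\neq1$ \ldots''), so I would simply invoke that reasoning verbatim and then conclude.
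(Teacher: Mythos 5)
Most of your argument is sound, and it is essentially the paper's mechanism made explicit: the paper proves this theorem in two lines by showing that a local minimizer of (\ref{L1o2uncon}) is automatically a local minimizer of the constrained problem (\ref{L1o2Con}) with ${\h b}:=A{\h x}^*$ (any nearby feasible point with a strictly smaller $\ell_1/\ell_2$ ratio would strictly decrease $F$, since the fidelity term is unchanged) and then invoking Theorem \ref{theo4.9}; your direct construction of $\hat{\h x}={\h x}^*+{\h v}$, $\check{\h x}={\h x}^*-{\h v}$ with $A{\h v}={\bf 0}$ and the cancellation of the fidelity term is the same idea inlined. The genuine gap is in your last paragraph, the proportionality case. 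If $\check{\h x}=\kappa\hat{\h x}$ with $\kappa>0$, $\kappa\neq 1$, then ${\h v}=\frac{1-\kappa}{1+\kappa}\,{\h x}^*$, i.e.\ ${\h v}$ is a nonzero multiple of ${\h x}^*$ and $A{\h x}^*={\bf 0}$. This is perfectly consistent with $\|{\h v}\|_2>0$, so your claim that it is ``incompatible\dots unless the scaling collapses'' is not an argument; and the sub-step of Theorem \ref{theo4.9} you propose to invoke verbatim rests on $A\hat{\h x}={\h b}\neq{\bf 0}$ (the standing assumption), whereas here one only knows $A\hat{\h x}=A{\h x}^*$, which may vanish. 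In particular, when ${\cal N}(A_{\Lambda^*})$ is spanned by ${\h x}^*|_{\Lambda^*}$, every admissible ${\h v}$ is a multiple of ${\h x}^*$, both the $\ell_1$ identity and the $\ell_2$ inequality become equalities, $F(\hat{\h x})=F(\check{\h x})=F({\h x}^*)$, and no contradiction is produced.

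This degenerate case is not a formality. For instance, with $n=3$, $m=2$, $A{\h x}=(x_1-x_2,\;x_3)^\top$, ${\h b}=(0,1)^\top$, $\gamma=2$ and ${\cal X}={\mathbb R}^3_+$ (note ${\h b}\neq{\bf 0}$ and ${\cal H}\neq\emptyset$), the point ${\h x}^*=(1,1,0)^\top$ is d-stationary and, by a direct expansion, a (non-strict) local minimizer of $F$ — $F$ is constant along the ray through ${\h x}^*$ because $A{\h x}^*={\bf 0}$ — yet $A_{\Lambda^*}^\top A_{\Lambda^*}$ is singular. So the case $A{\h x}^*={\bf 0}$ is exactly where the difficulty lives and cannot be waved away. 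To repair your write-up you must either add the hypothesis (or a separate argument ensuring) $A{\h x}^*\neq{\bf 0}$ before ruling out $\check{\h x}=\kappa\hat{\h x}$, or follow the paper's reduction to Theorem \ref{theo4.9} with ${\h b}:=A{\h x}^*$ — observing that this reduction itself tacitly uses $A{\h x}^*\neq{\bf 0}$ through the blanket assumption ${\h b}\neq{\bf 0}$, so the same caveat applies to the paper's own proof; your proposal simply surfaces it without resolving it.
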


\begin{proof}
First, we show that ${\h x}^*$ is also a local minimizer of the constrained problem (\ref{L1o2Con}) where
${\h b}:=A{\h x}^*$. Suppose not.
Then, for any $r>0$, there exists ${\h x}_r\in {\cal B}_r(\h x^*)\cap{\cal X}$
such that $A{\h x}_r = A{\h x}^*$ and $\frac{\|{\h x}_r\|_1}{\|{\h x}_r\|_2}< \frac{\|{\h x}^*\|_1}{\|{\h x}^*\|_2}.$
It further implies that $F({\h x}_r) <F({\h x}^*)$ where $F$ is defined in (\ref{L1o2uncon}),
which contradicts  that ${\h x}^*$ is a local minimizer of (\ref{L1o2uncon}). Therefore, ${\h x}^*$  is a local
minimizer of (\ref{L1o2Con}) where
${\h b}:=A{\h x}^*$. By invoking Theorem \ref{theo4.9}, the conclusion follows
directly.
\end{proof}

\begin{remark}
From Theorems \ref{theo4.9} and \ref{theor4.10}, we see that if a computed solution ${\h x}$ from the model (\ref{L1o2Con}) or (\ref{L1o2uncon}) fails to extract linearly independent columns from the
sensing matrix $A$.
Then, ${\h x}$ cannot be a local
minimizer.
\end{remark}
The next lemma presents a sufficient and necessary condition for characterizing ${\h x}\in{\cal H}_{L}$ which turns out to be checkable.

\begin{lemma}\label{tmpadd} ${\h x}\in{\cal H}_{L}$ if and only if  $A^\top_{\Lambda} A_{\Lambda}\succ{\bf 0}\;\mbox{where}\;\Lambda=\supp(\h x)$.
\end{lemma}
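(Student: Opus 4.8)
The plan is to reduce the matrix condition $A_\Lambda^\top A_\Lambda \succ {\bf 0}$ to the statement that the columns of $A_\Lambda$ are linearly independent, equivalently $\mathcal{N}(A_\Lambda)=\{{\bf 0}\}$, and then prove the two implications by relating the existence of a nonzero null vector of $A$ supported inside $\Lambda=\supp({\h x})$ to the failure of local sparsity. Both directions are handled by contraposition.

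For the ``if'' direction, assume $A_\Lambda^\top A_\Lambda\succ{\bf 0}$ and suppose ${\h x}\notin{\cal H}_L$. By Definition \ref{def2.3} there exists ${\h y}\in{\cal H}\backslash\{{\h x}\}$ with $\supp({\h y})\subseteq\Lambda$. Put ${\h w}={\h x}-{\h y}\neq{\bf 0}$; then $\supp({\h w})\subseteq\Lambda$ and $A{\h w}={\h b}-{\h b}={\bf 0}$, so ${\h w}|_\Lambda$ is a nonzero vector in $\mathcal{N}(A_\Lambda)$, contradicting $A_\Lambda^\top A_\Lambda\succ{\bf 0}$. Note this step uses nothing about whether ${\cal X}={\mathbb R}^n$ or ${\cal X}={\mathbb R}^n_+$.

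For the ``only if'' direction, assume ${\h x}\in{\cal H}_L$ and suppose $A_\Lambda^\top A_\Lambda$ is not positive definite, i.e. the columns of $A_\Lambda$ are linearly dependent, so there is ${\h v}\in\mathcal{N}(A)$, ${\h v}\neq{\bf 0}$, with $\supp({\h v})\subseteq\Lambda$. Set ${\h y}_t={\h x}+t{\h v}$. The key observation is that for $i\in\Lambda$ we have $x_i\neq 0$ (and $x_i>0$ when ${\cal X}={\mathbb R}^n_+$), while for $i\notin\Lambda$ both $x_i=0$ and $v_i=0$; hence there is $t_0>0$ such that for all $0<|t|<t_0$, ${\h y}_t\in{\cal X}$, $A{\h y}_t={\h b}$, $\supp({\h y}_t)=\Lambda$ and ${\h y}_t\neq{\h x}$. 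Thus ${\h y}_t\in{\cal H}\backslash\{{\h x}\}$ with $\supp({\h y}_t)\subseteq\Lambda$, contradicting ${\h x}\in{\cal H}_L$, and the proof is complete.

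There is no substantial obstacle here; the only point needing a little care is the perturbation in the last paragraph, namely choosing $t$ small enough so that ${\h y}_t$ retains exactly the support $\Lambda$ and stays feasible in ${\cal X}$ — this is what lets the same argument cover both ${\cal X}={\mathbb R}^n$ and ${\cal X}={\mathbb R}^n_+$ without separate cases.
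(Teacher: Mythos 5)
Your proof is correct, and the ``if'' direction coincides with the paper's argument verbatim (a distinct feasible $\h{y}$ with $\supp(\h{y})\subseteq\Lambda$ gives the nonzero vector $\h{y}-\h{x}$ restricted to $\Lambda$ in $\mathcal{N}(A_\Lambda)$). The ``only if'' direction takes a slightly different route: the paper moves along the null direction by the \emph{maximal} admissible step, $\tilde{\h{x}}=\h{x}-\sign(x_{\tilde i}\tilde v_{\tilde i})\,\zeta\,\tilde{\h{v}}$ with $\zeta=\min_{i}\bigl|x_i/\tilde v_i\bigr|$, which zeroes out the entry $\tilde i$ and exhibits a \emph{strictly sparser} feasible point, whereas you take a \emph{small} step $\h{y}_t=\h{x}+t\h{v}$ that keeps the support equal to $\Lambda$ and merely exhibits a different feasible point with the same support. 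Both contradict Definition \ref{def2.3}, since local sparsity forbids any $\h{y}\in{\cal H}\backslash\{\h{x}\}$ with $\supp(\h{y})\subseteq\supp(\h{x})$, containment that need not be strict. What your variant buys is simplicity: no sign or step-length bookkeeping is needed, and feasibility in ${\cal X}$ (for both ${\mathbb R}^n$ and ${\mathbb R}^n_+$) follows at once from $x_i\neq 0$ (resp. $x_i>0$) on $\Lambda$ for $|t|$ small; what the paper's maximal step buys is the slightly stronger conclusion that a strictly sparser feasible point exists, at the cost of verifying that the chosen $\zeta$ and sign keep $\tilde{\h{x}}$ in ${\cal X}$.
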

\begin{proof}
We use contradiction to show  the direction of ``only if".
 Let $\alpha=\sharp(\Lambda)$. 
Suppose not. It implies that there exists a vector ${\h v}\in{\mathbb R}^\alpha\;({\h v}\neq {\bf 0})$ such
that ${\h v}\in{\cal N}(A_{\Lambda})$. Thus, there exists a vector ${\tilde {\h v}}\in{\mathbb R}^n$ such that $({\tilde {\h v}})|_{\Lambda}={\h v}$ and $({\tilde {\h v}})|_{\Lambda^c}={\bf 0}$.
 We define
                                      an index set: $\Lambda_{{\tilde {\h v}}}:=\supp({\tilde{\h v}})$.
                                      Note that $\supp({\tilde {\h v}})\subseteq \Lambda$. Let
                                      $\zeta:=\min_{i\in\Lambda_{{\tilde {\h v}}}} |\frac{x_i}{{\tilde v}_i} |>0$
                                      and ${\tilde i}\in\arg\min_{i\in\Lambda_{{\tilde {\h v}}}}  |\frac{x_i}{{\tilde v}_i} |$.
                                      Next, we define the vector:
                                      ${\tilde {\h x}} = {\h x}-{\text{sign}}(x_{\tilde i}{\tilde v}_{\tilde i})\zeta {\tilde {\h v}}.$
                                      Then, ${\tilde {\h x}}\in{\cal H}$, and it is much sparser than $\h x$ and $\supp({\tilde {\h x}})\subseteq
                                      \supp(\h x)$ which contradicts  ${\h x}\in {\cal H}_L$.
Second, for  the direction of ``if", we also use contradiction.
Suppose there exists a feasible solution $\h y(\neq{\h x})$ such that ${\supp(\h y)}\subseteq{\supp}({\h x})$, and $A{\h y}={\h b}$,
${\h y}\in{\cal X}$. Then, define ${\h v}:={\h y}-{\h x}(\neq{\bf 0})\in{\cal N}(A_{\Lambda})$ which contradicts  $(A_{\Lambda})^\top(A_{\Lambda})\succ {\bf 0}$.
\end{proof}

Combining Theorems  \ref{theo4.9}, \ref{theor4.10} and Lemma \ref{tmpadd}, we conclude the following corollary.

\begin{corollary}  \label{coro4.11}
We have these facts hold:
\begin{itemize}
\item[(i)] Suppose that $rank(A)=m$ and ${\h x}^*$ is a local minimizer of (\ref{L1o2Con}) and (\ref{L1o2uncon}), the sparsity of ${\h x}^*$ is at most $m$.
\item[(ii)] The model (\ref{L1o2Con}) has a finite number of local minimizers.
\item[(iii)] If ${\h x}^*$ is a local minimizer of (\ref{L1o2Con}), then ${\h x}^*\in{\cal H}_{L}$ where ${\cal H}_{L}$ is defined in
Definition \ref{def2.3}.
\end{itemize}
\end{corollary}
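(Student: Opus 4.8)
The plan is to establish each of the three items as a direct consequence of the already-proved Theorems~\ref{theo4.9}, \ref{theor4.10} and Lemma~\ref{tmpadd}, so the proof is short and structural rather than computational.

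\textbf{Item (i).} Let ${\h x}^*$ be a local minimizer of either (\ref{L1o2Con}) or (\ref{L1o2uncon}) and set $\Lambda^*={\text{supp}}({\h x}^*)$. By Theorem~\ref{theo4.9} (for the constrained model) or Theorem~\ref{theor4.10} (for the unconstrained model), we have $(A_{\Lambda^*})^\top (A_{\Lambda^*})\succ{\bf 0}$, which forces the columns $A[:,i]_{i\in\Lambda^*}$ to be linearly independent. Hence $\sharp(\Lambda^*)\le r(A)$, and since $r(A)=m$ is assumed, we conclude $\|{\h x}^*\|_0=\sharp(\Lambda^*)\le m$. This is the entire argument for (i); no obstacle is expected.

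\textbf{Item (ii).} For (\ref{L1o2Con}), I would argue that every local minimizer is \emph{completely determined by its support}. Fix an index set $\Lambda\subseteq[n]$ that arises as the support of some local minimizer. By item (i) (applied with the standing hypotheses, or more directly by Theorem~\ref{theo4.9} which gives $(A_\Lambda)^\top A_\Lambda\succ{\bf 0}$ regardless of whether $r(A)=m$), the columns of $A_\Lambda$ are linearly independent, so the linear system $A_\Lambda {\h z}={\h b}$ has at most one solution ${\h z}\in{\mathbb R}^{\sharp(\Lambda)}$. Consequently there is at most one feasible ${\h x}\in{\cal H}$ with ${\text{supp}}({\h x})\subseteq\Lambda$ (and in particular at most one with support exactly $\Lambda$). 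Since there are only finitely many subsets $\Lambda\subseteq[n]$, the model (\ref{L1o2Con}) has at most finitely many local minimizers. The one point to be careful about is that Theorem~\ref{theo4.9} is stated for ${\h x}^*\neq{\bf 0}$, but ${\bf 0}\notin{\cal H}$ because ${\h b}\neq{\bf 0}$, so this caveat is vacuous here.

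\textbf{Item (iii).} Let ${\h x}^*$ be a local minimizer of (\ref{L1o2Con}) and put $\Lambda^*={\text{supp}}({\h x}^*)$. By Theorem~\ref{theo4.9}, $A^\top_{\Lambda^*}A_{\Lambda^*}\succ{\bf 0}$. Lemma~\ref{tmpadd} states precisely that this condition is equivalent to ${\h x}^*\in{\cal H}_L$, so ${\h x}^*\in{\cal H}_L$ as claimed. The only thing worth remarking is the direction of Lemma~\ref{tmpadd} being invoked: we use the ``if'' direction, i.e.\ $A^\top_{\Lambda^*}A_{\Lambda^*}\succ{\bf 0}\Rightarrow{\h x}^*\in{\cal H}_L$, applied to the feasible point ${\h x}^*\in{\cal H}$. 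Since all three items reduce to citing earlier results, the main (and very minor) obstacle is simply bookkeeping: making sure the nonnegativity case ${\cal X}={\mathbb R}^n_+$ is covered, which it is, because Theorems~\ref{theo4.9}, \ref{theor4.10} and Lemma~\ref{tmpadd} are each proved for both choices of ${\cal X}$.
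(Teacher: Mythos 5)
Your proposal is correct and follows exactly the route the paper intends: the paper itself dismisses (i) and (ii) as elementary and derives (iii) directly from Theorems~\ref{theo4.9}, \ref{theor4.10} and Lemma~\ref{tmpadd}, which is precisely what you do, with the omitted details (linear independence of $A_{\Lambda^*}$ bounding the support size, and uniqueness of the feasible point on each admissible support giving finiteness) filled in correctly. Your bookkeeping remarks (${\bf 0}\notin{\cal H}$ since ${\h b}\neq{\bf 0}$, and the ``if'' direction of Lemma~\ref{tmpadd}) are accurate.
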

\begin{proof} The proof for (i) and (ii) are elementary, thus omitted.
(iii) It follows from Theorems \ref{theo4.9}, \ref{theor4.10} and Lemma \ref{tmpadd} directly.
\end{proof}
\noindent
Equipped with Lemma \ref{tmpadd}, we provide an  exact recovery condition of the constrained model (\ref{L1o2Con})
 which extends  \cite[Theorem III.2]{YEX14} to nonnegative/arbitrary signal and
turns out to be much  easier to check.
The proof is analogous to \cite[Theorem III.2]{YEX14}, thus omitted.

\begin{theorem} \label{theo5.6} If ${\h x}_0$ uniquely solves (\ref{CSzero}) and $\|{\h x}_0\|_0=s$ and
if
\begin{eqnarray}\label{tempexR} \kappa(\h x)>\frac{(\sqrt{\|\h x\|_0}-\sqrt{\|\h x\|_0-s})^2}{s},\;\;\forall\; {\h x}\in{\tilde{\cal F}}\backslash\{{\h x}_0\},\end{eqnarray}
$ {\tilde {\cal F}}:=\{{\h x}\in{\cal H}\;|\;\Lambda=\supp (\h x),\;A_{\Lambda}^\top A_{\Lambda} \succ {\bf 0} \},$
where ${\cal H}$ is defined in (\ref{L1o2Con}).
Then, ${\h x}_0$ also uniquely solves (\ref{L1o2Con}).
\end{theorem}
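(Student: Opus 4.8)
The plan is to establish the stronger statement that $\frac{\|{\h x}\|_1}{\|{\h x}\|_2}>\frac{\|{\h x}_0\|_1}{\|{\h x}_0\|_2}$ for every ${\h x}\in{\cal H}\setminus\{{\h x}_0\}$, which yields at once that the infimum in (\ref{L1o2Con}) is attained and that ${\h x}_0$ is its unique minimizer. Two preliminary facts are needed. First, since ${\h x}_0$ is the \emph{unique} solution of (\ref{CSzero}), the columns of $A_{\Lambda_0}$ with $\Lambda_0=\supp({\h x}_0)$ must be linearly independent: otherwise a nonzero null vector ${\h v}$ of $A$ supported in $\Lambda_0$ would give, for a suitable scalar $t$, a strictly sparser feasible vector ${\h x}_0+t{\h v}$, contradicting uniqueness; hence ${\h x}_0\in{\tilde{\cal F}}$ and, by Cauchy--Schwarz, $\frac{\|{\h x}_0\|_1}{\|{\h x}_0\|_2}\le\sqrt{\|{\h x}_0\|_0}=\sqrt{s}$. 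Second, any feasible ${\h x}\ne{\h x}_0$ has $\|{\h x}\|_0\ge s+1$, again because ${\h x}_0$ is the unique sparsest feasible point.

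The heart of the matter is a sharp uniformity estimate: for every ${\h x}\ne{\bf 0}$, writing $t:=\|{\h x}\|_0$ and $\kappa:=\kappa({\h x})$ (Definition \ref{def2.4}), one has $\frac{\|{\h x}\|_1}{\|{\h x}\|_2}\ge\frac{2\sqrt{t\kappa}}{1+\kappa}$. This follows from a short optimization: the ratio is scale invariant, so normalizing $\max_i|x_i|=1$ places $|{\h x}|$, restricted to its support, in the box $[\kappa,1]^t$; on the positive orthant the map ${\h z}\mapsto\frac{{\bf 1}^\top{\h z}}{\|{\h z}\|_2}$ is quasi-concave (its superlevel sets are convex by the triangle inequality), so its minimum over that box is attained at a vertex, i.e.\ with entries in $\{\kappa,1\}$; minimizing the resulting one-variable expression $\frac{(k+(t-k)\kappa)^2}{k+(t-k)\kappa^2}$ over $k\in[0,t]$ gives the value $\frac{4t\kappa}{(1+\kappa)^2}$, which proves the estimate. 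Next I would observe that condition (\ref{tempexR}) is precisely calibrated to this bound: $\frac{2\sqrt{t\kappa}}{1+\kappa}>\sqrt{s}$ is equivalent to $s\kappa^2-(4t-2s)\kappa+s<0$, and the roots of $s\kappa^2-(4t-2s)\kappa+s$ are $\frac{(\sqrt{t}\pm\sqrt{t-s})^2}{s}$; since $t\ge s+1$ makes the larger root exceed $t/s>1\ge\kappa({\h x})$, the hypothesis $\kappa({\h x})>\frac{(\sqrt{t}-\sqrt{t-s})^2}{s}$ is exactly what forces $\frac{2\sqrt{t\kappa}}{1+\kappa}>\sqrt{s}$. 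Hence, for every ${\h x}\in{\tilde{\cal F}}\setminus\{{\h x}_0\}$, $\frac{\|{\h x}\|_1}{\|{\h x}\|_2}\ge\frac{2\sqrt{t\kappa({\h x})}}{1+\kappa({\h x})}>\sqrt{s}\ge\frac{\|{\h x}_0\|_1}{\|{\h x}_0\|_2}$.

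It remains to treat ${\h x}\in{\cal H}\setminus{\tilde{\cal F}}$, where $A_\Lambda$ with $\Lambda=\supp({\h x})$ has linearly dependent columns. Here I would move along a nonzero ${\h v}\in{\cal N}(A)$ supported in $\Lambda$: on the maximal interval around $0$ on which ${\h x}+t{\h v}$ retains the sign pattern of ${\h x}$, the numerator $\|{\h x}+t{\h v}\|_1$ is affine and the denominator $\|{\h x}+t{\h v}\|_2$ is strictly convex (they cannot be proportional along a subinterval, since ${\h v}\notin\mathbb{R}{\h x}$ because $A{\h x}={\h b}\ne{\bf 0}$), so $t\mapsto\frac{\|{\h x}+t{\h v}\|_1}{\|{\h x}+t{\h v}\|_2}$ is quasi-concave and strictly unimodal, and its value at the interior point $t=0$ strictly exceeds its infimum over the interval; generically this infimum is a finite endpoint value, where a coordinate of ${\h x}+t{\h v}$ vanishes, so one obtains a feasible vector of strictly smaller support, in the same orthant, with strictly smaller ratio. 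Iterating (the support is a strictly decreasing positive integer, and a feasible vector supported on independent columns lies in ${\tilde{\cal F}}$) reaches, with a net strict decrease of the ratio, a point ${\h x}^\star$ that is either ${\h x}_0$ (whence $\frac{\|{\h x}_0\|_1}{\|{\h x}_0\|_2}<\frac{\|{\h x}\|_1}{\|{\h x}\|_2}$) or lies in ${\tilde{\cal F}}\setminus\{{\h x}_0\}$ (whence the estimate of the previous paragraph gives $\frac{\|{\h x}_0\|_1}{\|{\h x}_0\|_2}<\frac{\|{\h x}^\star\|_1}{\|{\h x}^\star\|_2}<\frac{\|{\h x}\|_1}{\|{\h x}\|_2}$). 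Combining the two regimes proves the theorem. I expect the main obstacle to be the sharp uniformity estimate together with verifying that the threshold $\frac{(\sqrt{t}-\sqrt{t-s})^2}{s}$ in (\ref{tempexR}) is exactly the value at which $\frac{2\sqrt{t\kappa}}{1+\kappa}$ overtakes $\sqrt{s}$; the reduction of general feasible points to ${\tilde{\cal F}}$ — including the borderline case in which the infimum above is approached only as $t\to\pm\infty$ within an orthant, which requires a separate argument using $\|{\h v}\|_1/\|{\h v}\|_2\ge 1$ and the structure of ${\cal N}(A)$ forced by the uniqueness of ${\h x}_0$ — is more technical but follows the pattern of the proofs of Theorem \ref{theo4.9} and Lemma \ref{tmpadd}.
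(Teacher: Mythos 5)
Your first two paragraphs are correct and capture what the paper (following \cite{YEX14}) actually relies on: the quasi-concavity/vertex argument giving $\frac{\|\h x\|_1}{\|\h x\|_2}\ge \frac{2\sqrt{t\kappa}}{1+\kappa}$ with $t=\|\h x\|_0$ and $\kappa=\kappa(\h x)$, the identification of the roots of $s\kappa^2-(4t-2s)\kappa+s$ with $(\sqrt{t}\pm\sqrt{t-s})^2/s$, and the elementary facts $\frac{\|\h x_0\|_1}{\|\h x_0\|_2}\le\sqrt{s}$ (by Cauchy--Schwarz, using that $L_0$-uniqueness forces $\h x_0\in\tilde{\cal F}$) and $\|\h x\|_0\ge s+1$ for feasible $\h x\ne\h x_0$. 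Together these do give the strict comparison $\frac{\|\h x\|_1}{\|\h x\|_2}>\sqrt{s}\ge\frac{\|\h x_0\|_1}{\|\h x_0\|_2}$ on $\tilde{\cal F}\setminus\{\h x_0\}$, which is the heart of the exact-recovery condition.

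The genuine gap is your third paragraph, i.e.\ the attempt to upgrade this to ``every $\h x\in{\cal H}\setminus\{\h x_0\}$ has strictly larger ratio.'' The descent along $\h v\in{\cal N}(A)$ with $\supp(\h v)\subseteq\supp(\h x)$ breaks exactly in the case you flag: when the sign-preserving feasibility interval is a half-line and the ratio decreases along it, the infimum is only approached as $t\to\infty$ with limiting value $\frac{\|\h v\|_1}{\|\h v\|_2}$, and the finite endpoint may have a \emph{larger} ratio than $\h x$, so no sparser feasible point with smaller ratio need exist. Your proposed patch cannot close this: $L_0$-uniqueness of $\h x_0$ imposes no lower bound on $\frac{\|\h v\|_1}{\|\h v\|_2}$ for null vectors supported off $\supp(\h x_0)$, and the bound $\frac{\|\h v\|_1}{\|\h v\|_2}\ge 1$ is weaker than $\ge\frac{\|\h x_0\|_1}{\|\h x_0\|_2}$ whenever $s\ge 2$. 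Indeed, if two columns of $A$ outside $\supp(\h x_0)$ sum to zero (for ${\cal X}={\mathbb R}^n_+$) or coincide (for ${\cal X}={\mathbb R}^n$), then $L_0$-uniqueness and (\ref{tempexR}) can both hold --- (\ref{tempexR}) constrains only the finite set $\tilde{\cal F}$, and such supports are excluded from $\tilde{\cal F}$ --- while the ratio tends along an unbounded feasible ray to a value $\le\sqrt 2$ that can lie strictly below $\frac{\|\h x_0\|_1}{\|\h x_0\|_2}$. So the strong claim you set out to prove simply does not follow from the stated hypotheses, and no repair of the path argument can deliver it. The reduction available within the paper's framework is the weaker but sufficient one: any global solution of (\ref{L1o2Con}) is a nonzero local minimizer, hence by Theorem \ref{theo4.9} and Lemma \ref{tmpadd} (cf.\ Corollary \ref{coro4.11}(iii)) it lies in $\tilde{\cal F}$, and then your paragraph-two estimate forces it to be $\h x_0$. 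This yields ``any minimizer must be $\h x_0$''; attainment of the infimum is a separate matter and must be assumed or supplied by conditions of the type in Theorem \ref{theo4.6}, not derived from (\ref{tempexR}).
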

\section{Computational approach}\label{ADMMS}
We focus on solving (\ref{L1o2uncon}) with
${\cal X}=\mathbb R_+^n$. Inspired by \cite{Tao20}, we derive the closed-form solution of the proximal operator of $(L_1/L_2)^+$, and accompanied by a practical solver for finding one global solution.
\subsection{Proximal operator}  Define a proximal operator of $\left(L_{1} / L_{2}\right)^{+} (:=\frac{\|\h x\|_1}{\|\h x\|_2}+\iota_{{\mathbb R}^n_+}(\h x))$ with a parameter $\rho>0$ as
\begin{eqnarray}\label{close} \prox_{[({L}_1/{L}_2)^+/\rho]}(\h q):={\arg\min}_{\h x\in{\mathbb R}_+^n}\Big(\frac{\|\h x\|_1}{\|\h x\|_2}+\frac{\rho}{2}\|\h x-\h q\|_2^2\Big). \end{eqnarray}
It follows from \cite[Definition 1.23]{RockWets} and \cite[Theorem 1.25]{RockWets}, the solution set of (\ref{close}) is nonempty.
Obviously, if ${\h q} \in{\mathbb R}^n_{-}$, the solution of (\ref{close}) is ${\bf 0}$. Next,  Example 1 shows that the optimal solution of (\ref{close}) may not be unique.

\noindent{\bf Example 1.} Let $n=2$ and $q_{1}=q_{2}=\sqrt{2(\sqrt{2}-1)}$. Consider an objective function
\begin{eqnarray*} \min _{{\h x} \in{\mathbb R}^n_+} \frac{\left|x_{1}\right|+\left|x_{2}\right|}{\sqrt{x_{1}^{2}+x_{2}^{2}}}+\frac{1}{2}\left(x_{1}-q_{1}\right)^{2}+\frac{1}{2}\left(x_{2}-q_{2}\right)^{2}.
\end{eqnarray*}

\noindent Indeed, it has three globally optimal solutions:
\begin{eqnarray*}{\h x}_{1}=(\sqrt{2(\sqrt{2}-1)}, 0)^{\top},\; {\h x}_2=(0, \sqrt{2(\sqrt{2}-1)})^{\top}\;\;\mbox{and}\;\;{\h x}_3=\left(\sqrt{2(\sqrt{2}-1)}, \sqrt{2(\sqrt{2}-1)}\right)^{\top}.\nn\\
\label{x123}\end{eqnarray*}

Next, we characterize one of globally optimal solutions of  $\operatorname{Prox}_{[({L}_1/{L}_2)^+/\rho]}$ $({\h q})$
in a closed-form.

\begin{theorem}\label{Theo21} Given $\h q\in{\mathbb R}^n$ and ${\h q} \not\le \h 0$ and $\rho>0.$ 	
We can sort ${\h q}$ in a descending order in a way of ${q}_{\pi(1)}\ge \cdots\ge { q}_{\pi(\nu)}>0\ge q_{\pi(\nu+1)}\ge\cdots\ge{q}_{\pi(n)}$
where $\pi$ is a proper permutation of $[n]$.
Then, the following assertions hold:
	\begin{enumerate}
		\item[(i)] There exists an optimal solution ${\bar{\h x}}$ of \eqref{close} such that it has the same descending
order  as ${\h q}$, i.e.,
		$$ \bar{ x}_{\pi(1)} \ge\cdots\ge {\bar x}_{\pi(\nu)}\ge 0= {\bar x}_{\pi(\nu+1)}=\cdots={\bar x}_{\pi(n)}.$$
		\item[(ii)] We denote the multiplicity of the largest magnitude in $\h q$ as $\mu$,  i.e., $q_{\pi(1)}=\cdots=q_{\pi(\mu)}> q_{\pi(\mu+1)}$.

One of the following assertions holds:
 \begin{itemize}
  \item[(a)] If $0<\rho\le 1/(q_{\pi(1)}^2)$, then (\ref{close}) has a one-sparse solution given by
			\begin{eqnarray*}\label{eq:prox4x_case2}
		{\bar{ x}}_{\pi(i)}=\left\{\begin{array}{ll}
		{ q}_{\pi(i)} &i=1;\\
		0 &\mbox{otherwise}.
		\end{array}
		\right.
		\end{eqnarray*}
  \item[(b)]  If $\rho>\displaystyle{\frac{1}{q_{\pi(1)}^2}}$, there exist an integer $t$ ($t\le \nu$) and a scalar pair of $(a,r)$ such that $(Q^t=\sum_{i=1}^t q_{\pi(i)})$
		\begin{subequations} \nn
			\begin{numcases}{\hbox{\quad}}
			\nn \frac{a^2}{r^3}-\rho a+\rho Q^t-\frac{t}{ r}=0,\\[0.0cm]
			\nn r^3-\left(\sum_{i=1}^{t} { q}_{\pi(i)}^2\right) r+\frac { Q^t-a}\rho=0,
			\end{numcases}
		\end{subequations}
and the $r$ is also satisfied with
\begin{eqnarray*}\label{condadj}
q_{\pi(t)}-\frac 1 {\rho r}>0 \ \mbox{  and  }\ q_{\pi(t+1)}-\frac{1}{\rho r}\le 0,
 \end{eqnarray*}
 and the vector  ${\bar{\h x}}$ is characterized by
		\begin{eqnarray*}\label{eq:prox4x}
		{\bar{ x}}_{\pi(i)}=\left\{\begin{array}{ll}
		\displaystyle{\frac{\rho { q}_{\pi(i)} -\frac{1}{r}}{\rho-\frac{a}{r^3}}}&1\le i\le t,\\
		0 &\mbox{otherwise},
		\end{array}
		\right.
		\end{eqnarray*}
is an optimal solution of (\ref{close}), where $t=\|{\bar{\h x}}\|_0$, $a=\|{\bar{\h x}}\|_1$, $r=\|{\bar{\h x}}\|_2$.
    \end{itemize}

	\end{enumerate}
\end{theorem}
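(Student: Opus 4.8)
Write $g(\h x):=\frac{\|\h x\|_{1}}{\|\h x\|_{2}}+\frac{\rho}{2}\|\h x-\h q\|_{2}^{2}$ for the objective of \eqref{close}. The plan is to prove (i) by a rearrangement/truncation argument, and to prove (ii) by taking an actual global minimizer of \eqref{close} (which exists, as noted), normalizing it via (i) to a prefix‑supported form, and reading off its first‑order optimality conditions.

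\emph{Part (i).} Let $\h x^{*}$ be a global minimizer; $\h x^{*}\neq\h 0$ because $g(q_{\pi(1)}\h e_{\pi(1)})<g(\h 0)$ (using $\frac{\|\h 0\|_{1}}{\|\h 0\|_{2}}=1$ and $q_{\pi(1)}>0$). Sort the entries of $\h x^{*}$ into $x^{*}_{\sigma(1)}\ge\cdots\ge x^{*}_{\sigma(n)}\ge 0$ and put $\bar x_{\pi(i)}:=x^{*}_{\sigma(i)}$. Then $\|\bar{\h x}\|_{1}=\|\h x^{*}\|_{1}$, $\|\bar{\h x}\|_{2}=\|\h x^{*}\|_{2}$, and the rearrangement inequality gives $\langle\bar{\h x},\h q\rangle\ge\langle\h x^{*},\h q\rangle$, whence $\|\bar{\h x}-\h q\|_{2}\le\|\h x^{*}-\h q\|_{2}$, so $\bar{\h x}$ is again optimal and ordered like $\h q$. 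If $\supp(\bar{\h x})\not\subseteq\{\pi(1),\dots,\pi(\nu)\}$, let $\pi(k)$ carry the smallest positive entry $s:=\bar x_{\pi(k)}$; then $k>\nu$, so $q_{\pi(k)}\le 0$, and deleting this entry strictly decreases the quadratic term ($(s-q_{\pi(k)})^{2}-q_{\pi(k)}^{2}=s^{2}-2sq_{\pi(k)}\ge s^{2}>0$) while not increasing the ratio, since minimality of $s$ gives $\|\bar{\h x}\|_{2}^{2}\ge s\|\bar{\h x}\|_{1}$, which after squaring is exactly what yields $\frac{\|\bar{\h x}\|_{1}-s}{\sqrt{\|\bar{\h x}\|_{2}^{2}-s^{2}}}\le\frac{\|\bar{\h x}\|_{1}}{\|\bar{\h x}\|_{2}}$ (using $\|\bar{\h x}\|_{1}>s$). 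This contradicts optimality, proving (i).

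\emph{Part (ii): the optimality system.} By (i), take a global minimizer $\bar{\h x}$ ordered like $\h q$ with $\supp(\bar{\h x})=\{\pi(1),\dots,\pi(t)\}$, $t=\|\bar{\h x}\|_{0}\le\nu$; write $a=\|\bar{\h x}\|_{1}$, $r=\|\bar{\h x}\|_{2}$, $Q^{t}=\sum_{i\le t}q_{\pi(i)}$, $\sigma_{t}=\sum_{i\le t}q_{\pi(i)}^{2}$. Along each line $\bar{\h x}+s\h e_{\pi(i)}$ with $i\le t$, $g$ is smooth near $s=0$ (as $\bar x_{\pi(i)}>0$), so stationarity gives $\frac{1}{r}-\frac{a}{r^{3}}\bar x_{\pi(i)}+\rho(\bar x_{\pi(i)}-q_{\pi(i)})=0$, i.e.\ the displayed closed form $\bar x_{\pi(i)}=(\rho q_{\pi(i)}-\tfrac1r)/(\rho-\tfrac{a}{r^{3}})$. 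Summing these $t$ equations gives the first equation of (b); multiplying the $i$‑th by $\bar x_{\pi(i)}$ and summing gives the identity $\langle\bar{\h x},\h q\rangle=r^{2}$; multiplying the $i$‑th by $q_{\pi(i)}$, summing, and substituting $\langle\bar{\h x},\h q\rangle=r^{2}$ gives the second equation of (b). Minimality along the half‑line $\bar{\h x}+s\h e_{\pi(t+1)}$, $s\ge0$, forces the one‑sided derivative $\tfrac1r-\rho q_{\pi(t+1)}\ge0$, i.e.\ $q_{\pi(t+1)}-\tfrac1{\rho r}\le0$. Since $\bar x_{\pi(t)}>0$, numerator and denominator of its closed form share a sign; comparing $\bar x_{\pi(1)}\ge\bar x_{\pi(t)}$ shows $\rho-\tfrac{a}{r^{3}}>0$ whenever $q_{\pi(1)}>q_{\pi(t)}$, hence $q_{\pi(t)}-\tfrac1{\rho r}>0$, while if $q_{\pi(1)}=\cdots=q_{\pi(t)}$ the equations collapse to $\bar{\h x}=q_{\pi(1)}\sum_{i\le t}\h e_{\pi(i)}$, $r=q_{\pi(1)}\sqrt t$, and $q_{\pi(t)}-\tfrac1{\rho r}=q_{\pi(1)}-\tfrac1{\rho q_{\pi(1)}\sqrt t}>0$ iff $\rho q_{\pi(1)}^{2}\sqrt t>1$, true when $\rho>1/q_{\pi(1)}^{2}$. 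Thus, in the regime $\rho>1/q_{\pi(1)}^{2}$, the triple $(t,a,r)$ read off $\bar{\h x}$ satisfies all relations of (b) and $\bar{\h x}$ is the claimed optimal vector, giving (b).

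\emph{Part (ii): the regime $\rho\le 1/q_{\pi(1)}^{2}$, and the main obstacle.} If the minimizer has $t=1$, then $\bar{\h x}=c\h e_{\pi(1)}$ and minimizing the scalar quadratic forces $c=q_{\pi(1)}$, the one‑sparse vector of (a); one must then show it is globally optimal in this regime. For that, replace any feasible $\h x\neq\h 0$ by its best positive multiple: if $\langle\h x,\h q\rangle\le 0$ the resulting value is at least $g(\h 0)$, which exceeds $g(q_{\pi(1)}\h e_{\pi(1)})$, and if $\langle\h x,\h q\rangle>0$ it equals $\frac{\|\h x\|_{1}}{\|\h x\|_{2}}-\frac{\rho}{2}\frac{\langle\h x,\h q\rangle^{2}}{\|\h x\|_{2}^{2}}+\frac{\rho}{2}\|\h q\|_{2}^{2}$; restricting by (i) to ordered $\h x$ supported on $\{\pi(1),\dots,\pi(\nu)\}$ and completing the square, global optimality of $q_{\pi(1)}\h e_{\pi(1)}$ is equivalent to the scalar‑free inequality
\[
\frac{\|\h x\|_{1}}{\|\h x\|_{2}}-1+\frac{\rho}{2}\sum_{i=1}^{\nu}\big(x_{\pi(i)}-q_{\pi(i)}\big)^{2}\ \ge\ \frac{\rho}{2}\sum_{i=2}^{\nu}q_{\pi(i)}^{2}.
\]
Establishing this is the technical heart: it is tight at $q_{\pi(1)}\h e_{\pi(1)}$ and — for a repeated maximum — also, at the sharp value of $\rho$, at $q_{\pi(1)}\sum_{i\le\mu}\h e_{\pi(i)}$ (the nearest competitor, which is precisely where the multiplicity $\mu$ from the statement enters and pins down the genuine threshold, $\rho\le\frac{2(\sqrt\mu-1)}{(\mu-1)q_{\pi(1)}^{2}}$, reducing to $1/q_{\pi(1)}^{2}$ when $\mu=1$), so the argument must be precise enough to see both extremizers; an equivalent route is to analyze the explicit but piecewise function $r\mapsto\min_{\|\h x\|_{2}=r,\,\h x\ge\h 0}g(\h x)$, whose pieces indexed by the active set $\{i:q_{\pi(i)}>1/(\rho r)\}=\{1,\dots,k\}$ require a convexity/monotonicity check to locate its minimizer. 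The remaining care‑points are routine: the nonsmoothness of $\|\cdot\|_{1}$ at inactive coordinates (handled by one‑sided directional derivatives, as above) and the repeated‑maximum degeneracy (handled by the explicit form $\bar{\h x}=q_{\pi(1)}\sum_{i\le t}\h e_{\pi(i)}$).
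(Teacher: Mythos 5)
Your part (i) is correct and follows a genuinely different (and cleaner) route than the paper's: you sort a global minimizer against the sorted data via the rearrangement inequality and then truncate the smallest positive entry sitting over a nonpositive $q_{\pi(k)}$, the key check being that the ratio cannot increase because $\|\bar{\h x}\|_2^2\ge s\|\bar{\h x}\|_1$; the paper instead uses a pairwise exchange argument, a KKT-based strengthening, and a two-case analysis ($\rho\ge a/r^3$ versus $\rho<a/r^3$) to rule out positive entries over nonpositive data. Your derivation of the system in (ii)(b) from stationarity (summing the equations, weighting by $\bar x_{\pi(i)}$ and by $q_{\pi(i)}$, and the one-sided derivative at the first inactive coordinate) is also essentially sound and is more self-contained than the paper, which at this point simply reduces \eqref{close} to the unconstrained reduced problem \eqref{equpro} on the positive coordinates and invokes \cite[Theorem 3.3]{Tao20} for the whole of part (ii); the only loose ends in your (b) are the degenerate possibility $\rho=a/r^{3}$, which you divide by without excluding, and the assertion that in the all-equal case the minimizer must be $q_{\pi(1)}\sum_{i\le t}\h e_{\pi(i)}$.

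The genuine gap is part (ii)(a): you reduce global optimality of $q_{\pi(1)}\h e_{\pi(1)}$ on $0<\rho\le 1/q_{\pi(1)}^2$ to an inequality and explicitly stop there, so as a proof of the theorem the proposal is incomplete — and this is exactly the content the paper imports by citation rather than proving. Moreover, your multiplicity remark is substantive: for $\mu\ge 2$, comparing the objective at $q_{\pi(1)}\h e_{\pi(1)}$ with that at $q_{\pi(1)}\sum_{i\le\mu}\h e_{\pi(i)}$ gives a difference of $1-\sqrt{\mu}+\tfrac{\rho}{2}(\mu-1)q_{\pi(1)}^{2}$, which is positive as soon as $\rho>\tfrac{2(\sqrt{\mu}-1)}{(\mu-1)q_{\pi(1)}^{2}}=\tfrac{2}{(\sqrt{\mu}+1)\,q_{\pi(1)}^{2}}<\tfrac{1}{q_{\pi(1)}^{2}}$. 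With the data of the paper's Example 1 ($q_{\pi(1)}^2=2(\sqrt2-1)$, so $1/q_{\pi(1)}^2\approx1.21$) and $\rho=1.1$, the one-sparse vector is strictly worse than $(q_{\pi(1)},q_{\pi(1)})^{\top}$, while $\rho=1$ is exactly the tie exhibited in that example. So the inequality you would have needed cannot hold on the whole range claimed in (a) when the maximum is repeated: the missing piece is not just an omitted computation on your side, but an indication that case (a), as transcribed from \cite{Tao20}, requires the $\mu$-dependent threshold (or a $\mu$-sparse solution) — note that $\mu$ is introduced in the statement of the theorem but never used. In short: (i) and (b) are fine modulo minor edge cases, (a) is unproven, and your diagnosis of why it resists proof appears to be well founded.
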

\begin{proof}
(i) First, we verify that for any global solution ${\bar{\h x}}$, we have
\begin{eqnarray} \label{key1} q_i >q_j \;\Rightarrow \; {\bar x}_i\ge {\bar x}_j.\end{eqnarray}
We use contradiction. Define the objective function of (\ref{close}) by $f({\h x})=\frac{\|{\h x} \|_1}{\|{\h x}\|_2}+\iota_{\ge 0}(\h x)+ \frac{\rho}{2}\|\h x -{\h q}\|^2_2$.
Suppose not. It means that ${\bar x}_i <{\bar x}_j$. Then,
we exchange these two entries in ${\h {\bar x}}$ to obtain
a new vector ${\hat{\h x}}$. Then,
$f({\hat{\h x}})<f({\bar{\h x}})$ since
$(q_i-{\bar{x}_i})^2+(q_j-{\bar{x}_j})^2> (q_i-{\bar{x}_j})^2+(q_j-{\bar{x}_i})^2.$
It contradicts  ${\bar{\h x}}$ being a global solution. Thus, (\ref{key1}) holds.
 Furthermore, if ${\bar x}_i$, ${\bar x}_j>0$, we can strengthen the conclusion  in (\ref{key1}) to
 ``${\bar x}_i> {\bar x}_j$".
Indeed, by invoking the optimality conditions of (\ref{close}), it leads to
 \begin{eqnarray}\label{addopt}({\h x}-{\bar{\h x}})^\top\left(\frac{\bf 1}{\|{\bar{\h x}}\|_2}-\frac{\|{\bar {\h x}}\|_1}{\|{\bar {\h x}}\|_2^3}{\bar{\h x}}+\rho({\bar{\h x}}-{\h q}) \right) \ge {\bf 0},\;\;\;\;\forall{\h x}\in {\mathbb R}_+^n.  \end{eqnarray}
 Define $\Upsilon :=\displaystyle{ \frac{\bf 1}{\|{\bar{\h x}}\|_2}-\frac{\|{\bar{\h x}}\|_1}{\|{\bar{\h x}}\|_2^3}{\bar{\h x}} +\rho({\bar{\h x}}-{\h q})}$. Since ${\bar x}_i$, ${\bar x}_j>0$,
 it leads to
 \begin{eqnarray}\label{tm1}\Upsilon_i=\Upsilon_j=0.\end{eqnarray}
 Suppose not. Then, ${\bar x}_i= {\bar x}_j$. It follows from the above equality that $q_i=q_j$
 which contradicts $q_i>q_j$. Therefore, ${\bar x}_i> {\bar x}_j$.
 If there exists several entries of ${\h q}$ with the same value, the corresponding entries in ${\bar{\h x}}$ can be
arranged in a descending order.
 Thus, there exists a global solution ${\bar{\h x}}$ such that
\begin{eqnarray} \label{des} {\bar x}_{\pi(1)}\ge {\bar x}_{\pi(2)}\ge\cdots\ge {\bar x}_{\pi(n)}.  \end{eqnarray}
Next, we show that
\begin{eqnarray}\label{Theorem3.2:prot1} q_i \le 0 \Rightarrow {\bar x}_i = 0.\end{eqnarray}
We use contradiction. Suppose not. Then, there exists at least one index ${\hat i}$ such that
 $q_{\hat i}\le 0$ and ${\bar x}_{\hat i}>0$. 
 It follows from (\ref{addopt}) that
 \begin{eqnarray} \label{Theorem3.2:prot2} {\bar x}_{\hat i}>0 \Rightarrow \Upsilon_{\hat i} = 0.\end{eqnarray}
 In the following, we divide into two cases to verify.\\
 Case 1. $\rho\ge a/r^3$. Since ${\bar x}_{\hat i}>0$,
$\Upsilon_{\hat i}= \frac{1}{\|{\bar {\h x}}\|_2}+(\rho-\frac{a}{r^3}){\bar x}_{\hat i}-\rho q_{\hat i}>0$ due to $q_{\hat i}\le 0$ and $\rho-a/r^3\ge0$, and it
 contradicts  (\ref{Theorem3.2:prot2}).\\
 Case 2. $\rho <a/r^3$. Invoking (\ref{Theorem3.2:prot2}), it leads to
 \begin{eqnarray} \label{case2} {\bar x}_i=\displaystyle{\frac{\rho q_i - \frac{1}{r}}{\rho - \frac{a}{r^3}},\;\;\;\;\;\;\forall\; i\in{\text{supp}}({\bar{\h x}})}. \end{eqnarray}
 We define two index sets: $\Lambda^+:=\{i\;|\; q_i>0\}$ and $\Lambda^-:=\{i\;|\;q_i\le 0\}$.
 We divide ${\bar{\h x}}$ into two parts: ${\bar {\h x}}^+ ={\bar {\h x}}\mid_{\Lambda^+}$ and ${\bar {\h x}}^- ={\bar {\h x}}\mid_{\Lambda^-}$. By assumption, we know that
 ${\bar{\h x}}^-\neq {\bf 0}$. Thus, ${\bar{\h x}}^+\neq{\bf 0}$ due to (\ref{des}). Picking up $i
 \in {\Lambda^+}$ and setting $j={\hat i}\in \Lambda^-$ (i.e., ${\bar x}_j>0$).
 Since $q_i>q_j$ and ${\bar x}_j>0$, it leads to ${\bar x}_i>0$ due to (\ref{key1}).
 It implies that $i,j\in{\text{supp}}({\bar{\h x}})$.
 Consequently, it follows from (\ref{case2}) that
 \begin{eqnarray}\label{Theorem3.2:prot4} {\bar x}_i<{\bar x}_j,\end{eqnarray}
 due to $\rho<a/r^3$. On the other hand, since ${\bar{\h x}}$ is an optimal solution, we have
 proved that
 \begin{eqnarray*} q_i>q_j \Rightarrow {\bar x}_i> {\bar x}_j,\end{eqnarray*}
 which contradicts  (\ref{Theorem3.2:prot4}). Therefore, the assertion (\ref{Theorem3.2:prot1}) holds. Thus, the assertion (i) follows immediately.
 (ii) In view of (\ref{Theorem3.2:prot1}), it implies that the minimization problem (\ref{close}) amounts to solving a low-dimension
 minimization problem:
 \begin{eqnarray}\label{equpro} {\arg\min}_{{\h y}\in \mathbb{R}^{\nu}}\left( \frac{\|\h y\|_1}{\|\h y\|_2}+\frac{\rho}{2}\|{\h y}-{\h p}\|_2^2\right), \end{eqnarray}
 where ${\h p}={\h q}\mid_{\sigma}$ and $\sigma:=\{\pi(1),\cdots,\pi(\nu)\}$. Denoting ${\bar{\h y}}$ as an optimal solution of (\ref{equpro}), the vector ${\h x}$ defined by ${\h x}|_{\sigma}={\h {\bar y}}$ and ${\h x}|_{{\sigma}^c}={\bf 0}$
                                        is an optimal solution of (\ref{close}). By invoking \cite[Theorem 3.3]{Tao20}, the assertion of (ii) follows immediately.
\end{proof}

 For finding one global  solution of (\ref{equpro}),
a fast solver has been developed in \cite[Algorithm 3.1]{Tao20}.
 Indeed, Algorithm 3.1 of \cite{Tao20} aims to find one  global solution of $\prox_{[({L}_1/{L}_2)^+/\rho]}(\h q):={\arg\min}_{\h x\in{\mathbb R}^n}\Big(\frac{\|\h x\|_1}{\|\h x\|_2}+\frac{\rho}{2}\|\h x-\h q\|_2^2\Big)$. In \cite[Theorem 3.3]{Tao20}, one global solution  of
 $\prox_{[({L}_1/{L}_2)^+/\rho]}(\h q)$ has been characterized in a closed-form.
 It includes two cases: (a)  If $0<\rho\le 1/(q_{\pi(1)}^2)$, there is a one-sparse solution;
 (b) If $\rho>1/q_{\pi(1)}^2$, there is a $t$-sparse solution.
 Algorithm 3.1 \cite{Tao20} either returns a one-sparse solution for case (a) or produces a $t$-sparse solution for case (b). For the latter case, it adopts a bisection search to find the true sparsity $t$ and incorporates the fixed-point iterative method
to get the unique solution pair  $(a,r)$  \cite[Lemma 3.6]{Tao20}  of the two-dimension nonlinear system. With this $(a,r)$, Algorithm 3.1 of \cite{Tao20} computes
the  $t$-sparse solution in a closed-form.
More  discussions can be found in \cite[Section 3]{Tao20}.
In summary, an overall algorithm for finding ${\bar{\h x}}\in\prox_{[({L}_1/{L}_2)^+/\rho]}(\h q) $ with ${\h q}\in{\mathbb R}^n$ and $\rho>0$ is presented in Algorithm \ref{subx:admm:Bes}.

\begin{algorithm}[t]
	\caption{Finding a solution of $\prox_{[({L}_1/{L}_2)^+/\rho]}(\h q)$}
	\label{subx:admm:Bes}
	\begin{algorithmic}[1]
		\Require{$\rho >0$, ${\h q}\in{\mathbb R}^n$, ${q}_{\pi(1)}\ge \cdots\ge { q}_{\pi(\nu)}>0\ge q_{\pi(\nu+1)}\ge\cdots\ge{q}_{\pi(n)}$. Set ${\h p}={\h q}|_{\sigma}$ where $\sigma=\{\pi(1),\cdots,\pi(\nu)\}$.}
   \State{Using  \cite[Algorithm 3.1]{Tao20} to find ${\h {\bar y}}\in \prox_{[({L}_1/{L}_2)/\rho]}(\h p)$.}
   \State{Define ${\bar {\h x}}|_{\sigma}={\h {\bar y}}$ and ${\bar {\h x}}|_{{\sigma}^c}={\bf 0}$.}
	\State{ {\bf Output} ${\bar {\h x}}$.}
	\end{algorithmic}
\end{algorithm}
\subsection{ADMM for solving  (\ref{L1o2uncon}) with ${\cal X}=\mathbb R_+^n$}
Although there exist a few different ways for reformulating the unconstrained model (\ref{L1o2uncon}) with $\mathcal{X}=\mathbb{R}_+^{n}$,
most of them  result in a scheme of ADMM with violation of convergence guarantee \cite{LiPong15,HongLuoRazaviyayn16,WangYinZeng15}.
 Equipped with the newly-derived
solution of the proximity of $(L_1/L_2)^+$, we apply ADMM to  (\ref{L1o2uncon}) in a particular splitting way:
\begin{eqnarray}\label{ADMMI}
\begin{array}{ll}
\min_{\h x,\h y\in{\cal X}}&\displaystyle{\gamma\frac{\|\h x\|_1}{\|\h x\|_2}}+\frac{1}{2}\|A{\h y}-{\h b}\|^2_2\\[0.2cm]
s.t.                    &\h x=\h y,\;{\h x}\in {\cal X}.
\end{array}
\end{eqnarray}
The  augmented Lagrangian of (\ref{ADMMI}) is defined by
\begin{eqnarray}\label{eq:augL}
{\cal L}_{{\cal A}}(\h x,\h y,\h z)\!=\!\gamma\frac{\|\h x\|_1}{\|\h x\|_2}\!+\!\iota_{\cal X}(\h x)\!+\!\frac{1}{2}\|A{\h y}-{\h b}\|^2
+\h z^\top(\h x-\h y)\!+\!\frac{\beta}{2}\|\h x-\h y\|_2^2,
\end{eqnarray}
where $\h z$ is the Lagrangian multiplier and $\beta>0$ is the penalty parameter.
Given $({\h y}^k,{\h z}^k)$, the ADMM scheme generates the iterative sequence $\{\h w^k\}$  (${\h w}^k=({\h x}^k,{\h y}^k,
{\h z}^k)$) as follows,
\begin{subequations} \label{ADMMschI}
	\begin{numcases}{\hbox{\quad}}
	\label{xsub-ADMMI}\h x^{k+1}\in{\arg\min}_{\h x\in{\cal X}} {\cal L}_{{\cal A}}(\h x,\h y^k,\h z^k),\\[0.0cm]
	\label{ysub-ADMMI}\h y^{k+1}={\arg\min}_{\h y} {\cal L}_{{\cal A}}(\h x^{k+1},\h y,\h z^k),\\[0.0cm]
	\label{zsub-ADMMI}\h z^{k+1}=\h z^k+\beta(\h x^{k+1}-\h y^{k+1}).
	\end{numcases}
\end{subequations}
The $\h x$-subproblem (\ref{xsub-ADMMI})  amounts to
$\h x^{k+1}\in\prox_{[\frac{\gamma}{\beta}({L}_1/{L}_2)^+]}({\h y}^k-{\h z}^k/\beta).$
By using  Sherman-Morrison-Woodbury Theorem,  the $\h y$-subproblem \eqref{ysub-ADMMI} can be given by a more efficient
scheme:
\begin{eqnarray}\label{ynew}\h y^{k+1}\!=\!M \left( \frac{A^\top{\h b}}{\beta}+\frac{\h z^k}{\beta}+\h x^{k+1}\right),\end{eqnarray}
where $M\!=\!I_n-\frac{1}{\beta}A^\top (I_m+\frac{1}{\beta}AA^\top)^{-1}A$ since $m\ll n$.
We summarize the overall  scheme in Algorithm \ref{alg:ADMMschI}, and denote it by ADMM$_p^+$.

\begin{algorithm}[t]
	\caption{ADMM$_p^+$}
	\label{alg:ADMMschI}
	\begin{algorithmic}[1]
		\Require{$A\in{\mathbb R}^{m\times n}$, $\h b\in{\mathbb R}^m$, $\beta, \;\varepsilon>0$. }
		\State Initialize: $\h y^0=\h z^0$.
		\While{$k<k_{\max}$ or $\|\h x^{k-1}-\h x^{k}\|/\|\h x^{k}\|>\varepsilon$}
		\State Solving the $\h x$-subproblem (\ref{xsub-ADMMI}) via Algorithm \ref{subx:admm:Bes}.
		\State Computing $\h y^{k+1}$ via (\ref{ynew}).
		\State Updating $\h z^{k+1}$ via (\ref{zsub-ADMMI}).
		\EndWhile
	\end{algorithmic}
\end{algorithm}

\subsection{Global Convergence}		\label{convanaly}
 In contrast to the existing literature on the convergence analysis of ADMM or its variants \cite{LiPong15,HongLuoRazaviyayn16,WangYinZeng15,Tao20}, we
 proves it converges to a d-stationary point without the KL assumption. We define the merit function:
\begin{eqnarray}
\label{shMF}
{\cal T}(\h x,\h y)=\gamma\frac{\|\h x\|_1}{\|\h x\|_2}\!+\!\iota_{\cal X}(\h x)\!+\!\frac{1}{2}\|A{\h x}-{\h b}\|^2\!+\!\frac{\beta}{2}\|\h x-\h y\|_2^2,
\end{eqnarray}
and denote ${\cal T}^k:={\cal T}({\h x^k},{\h y^k})$ for succinctness.


\begin{lemma}\label{theo49}
		Let $\{\h w^k\}$ be the sequence generated by  ADMM$_p^+$. If
$\beta>2L$, then there exists a constant $c_1>0$ such that
			${\cal T}^{k+1}\le {\cal T}^k-c_1 \|\h y^k-\h y^{k+1}\|_2^2 .$
	\end{lemma}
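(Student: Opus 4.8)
The plan is to establish the sufficient-decrease property of the merit function $\mathcal{T}$ along the ADMM$_p^+$ iterates by decomposing the change $\mathcal{T}^{k+1}-\mathcal{T}^k$ according to the three update steps and controlling each piece. First I would write
\[
\mathcal{T}^{k+1}-\mathcal{T}^k = \big(\mathcal{T}(\h x^{k+1},\h y^{k+1})-\mathcal{T}(\h x^{k+1},\h y^k)\big) + \big(\mathcal{T}(\h x^{k+1},\h y^k)-\mathcal{T}(\h x^k,\h y^k)\big),
\]
so the task splits into a ``$\h y$-move'' term and an ``$\h x$-move'' term. The $\h y$-move only affects the quadratic coupling $\tfrac{\beta}{2}\|\h x-\h y\|_2^2$ in $\mathcal{T}$, so that difference is $\tfrac{\beta}{2}\|\h x^{k+1}-\h y^{k+1}\|_2^2-\tfrac{\beta}{2}\|\h x^{k+1}-\h y^k\|_2^2$, which I will need to bound above by a negative multiple of $\|\h y^k-\h y^{k+1}\|_2^2$ plus slack that the $\h x$-move absorbs. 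Note $\mathcal{T}$ does not contain the term $\tfrac12\|A\h y-\h b\|^2$ that appears in $\mathcal{L}_\mathcal{A}$; this mismatch is exactly what the $L$-smoothness of $h(\h y):=\tfrac12\|A\h y-\h b\|^2$ (with $L=\|A^\top A\|$) will be used to handle.

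The key steps, in order: (1) From the $\h y$-subproblem \eqref{ysub-ADMMI}, which minimizes a $\beta$-strongly convex function of $\h y$ (the quadratic $\tfrac12\|A\h y-\h b\|^2 + (\h z^k)^\top(\h x^{k+1}-\h y) + \tfrac\beta2\|\h x^{k+1}-\h y\|_2^2$ is strongly convex with modulus $\beta$ since $A^\top A\succeq 0$), deduce $\mathcal{L}_\mathcal{A}(\h x^{k+1},\h y^{k+1},\h z^k) \le \mathcal{L}_\mathcal{A}(\h x^{k+1},\h y^k,\h z^k) - \tfrac\beta2\|\h y^k-\h y^{k+1}\|_2^2$, and also use the optimality condition $A^\top(A\h y^{k+1}-\h b) - \h z^k - \beta(\h x^{k+1}-\h y^{k+1}) = 0$, i.e. $\h z^{k+1} = A^\top(A\h y^{k+1}-\h b)$ after the $\h z$-update. (2) Convert between $\mathcal{L}_\mathcal{A}$ and $\mathcal{T}$: since $\h z^{k+1}=A^\top(A\h y^{k+1}-\h b)$, the dual term $(\h z)^\top(\h x-\h y)$ evaluated along the iterates can be rewritten and combined with $\tfrac12\|A\h y-\h b\|^2$ using $L$-smoothness — concretely, $\tfrac12\|A\h x^{k+1}-\h b\|^2 \le \tfrac12\|A\h y^{k+1}-\h b\|^2 + \langle A^\top(A\h y^{k+1}-\h b), \h x^{k+1}-\h y^{k+1}\rangle + \tfrac{L}{2}\|\h x^{k+1}-\h y^{k+1}\|_2^2$, which lets me replace the $\h y$-dependent quadratic in $\mathcal{L}_\mathcal{A}$ with the $\h x$-dependent quadratic in $\mathcal{T}$ at the cost of an $O(L)\|\h x^{k+1}-\h y^{k+1}\|_2^2$ term. (3) For the $\h x$-move, use that $\h x^{k+1}$ minimizes $\mathcal{L}_\mathcal{A}(\cdot,\h y^k,\h z^k)$, so $\mathcal{L}_\mathcal{A}(\h x^{k+1},\h y^k,\h z^k)\le \mathcal{L}_\mathcal{A}(\h x^k,\h y^k,\h z^k)$, giving a nonpositive contribution outright; then translate this back to $\mathcal{T}$ using step (2) again. (4) Control $\|\h x^{k+1}-\h y^{k+1}\|_2^2 = \tfrac{1}{\beta^2}\|\h z^{k+1}-\h z^k\|_2^2$ by a dual-difference bound: from $\h z^{k+1}=A^\top(A\h y^{k+1}-\h b)$ one gets $\|\h z^{k+1}-\h z^k\| = \|A^\top A(\h y^{k+1}-\h y^k)\| \le L\|\h y^{k+1}-\h y^k\|$, hence $\|\h x^{k+1}-\h y^{k+1}\|_2^2 \le \tfrac{L^2}{\beta^2}\|\h y^{k+1}-\h y^k\|_2^2$. (5) Collect all terms: the guaranteed decrease is $-\tfrac\beta2\|\h y^k-\h y^{k+1}\|_2^2$ from step (1), the slack terms are $O(L)\cdot\tfrac{L^2}{\beta^2}\|\h y^{k+1}-\h y^k\|_2^2$ from steps (2),(4), so for $\beta$ sufficiently large relative to $L$ — and the stated hypothesis $\beta>2L$ should, after the arithmetic, give a clean positive constant $c_1$ — the net coefficient of $\|\h y^k-\h y^{k+1}\|_2^2$ is $-c_1<0$.

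The main obstacle I expect is step (2)/(4): the careful bookkeeping that turns the standard augmented-Lagrangian descent (which is in terms of $\mathcal{L}_\mathcal{A}$ and the dual increment $\|\h z^{k+1}-\h z^k\|$) into a descent for the specially chosen merit function $\mathcal{T}$, whose whole point is to replace $\tfrac12\|A\h y-\h b\|^2$ by $\tfrac12\|A\h x-\h b\|^2$ so that $\mathcal{T}$ equals the true objective $F(\h x)$ plus the coupling penalty. Getting the identity $\h z^{k+1}=A^\top(A\h y^{k+1}-\h b)$ (which hinges on this particular splitting, where the smooth term lives entirely on the $\h y$-block) and then chaining the $L$-smoothness inequalities without losing the sign is where the constant $c_1$ and the threshold on $\beta$ are actually determined; everything else is routine. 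I would also double-check that $\mathcal{T}$ is bounded below (it is, since $F\ge 0$ and the penalty is nonnegative), which is needed for the downstream convergence argument but not for this lemma itself.
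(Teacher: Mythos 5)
Your proposal is correct and follows essentially the same route as the paper: use the $\h y$-optimality identity $\h z^{k+1}=A^\top(A\h y^{k+1}-\h b)$ to get $\|\h z^{k+1}-\h z^k\|\le L\|\h y^{k+1}-\h y^k\|$, combine the $\h x$-minimization descent, the $\beta$-strong convexity of the $\h y$-subproblem, and the dual update to get sufficient decrease of ${\cal L}_{\cal A}$, then bridge ${\cal L}_{\cal A}$ and ${\cal T}$ via the $L$-smoothness of $\frac12\|A\cdot-\h b\|^2$, with $\beta>2L$ yielding $c_1=3L/8$. The only small touch-up: the "translate back" step ${\cal L}_{\cal A}(\h x^k,\h y^k,\h z^k)\le{\cal T}^k$ follows from convexity (the first-order lower bound at $\h y^k$ with $\h z^k=A^\top(A\h y^k-\h b)$), not from reapplying the smoothness upper bound, and the slack bookkeeping should also include the dual-ascent term $\frac1\beta\|\h z^{k+1}-\h z^k\|^2\le\frac{L^2}{\beta}\|\h y^{k+1}-\h y^k\|^2$, which your $\beta>2L$ arithmetic still absorbs.
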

\begin{proof}
First, it follows from the optimality condition of (\ref{ysub-ADMMI}) that ${\h z}^{k+1} = A^\top(A{\h y}^{k+1}-{\h b})$.
Then, it further implies that
\begin{eqnarray} \label{boundual} \|{\h z}^k-{\h z}^{k+1}\|_2\le L\|{\h y}^k-{\h y}^{k+1}\|_2, \end{eqnarray}
where $L=\sigma_{\max}(A^\top A)$ where $\sigma_{\max}(\cdot)$ represents the largest eigenvalue.
Next, invoking (\ref{xsub-ADMMI}), it leads to
${\cal L}_{\cal A} ({\h x}^{k+1},{\h y}^k,{\h z}^k)\le {\cal L}_{\cal A} ({\h x}^{k},{\h y}^k,{\h z}^k).$
Then, using (\ref{ysub-ADMMI}), it yields that
$$ {\cal L}_{\cal A}({\h x}^{k+1},{\h y}^{k+1},{\h z}^k) \le {\cal L}_{\cal A}({\h x}^{k+1},{\h y}^{k},{\h z}^k)-\frac{\beta}{2}\|{\h y}^k-{\h y}^{k+1}\|^2,$$
which is due to ${\cal L}_{\cal A}({\h x}^{k+1},{\h y},{\h z}^k)$ is strongly convex with respect to ${\h y}$ with strongly convex coefficient of
$\frac{\beta}{2}$.
In view of (\ref{zsub-ADMMI}), we obtain that
${\cal L}_{\cal A}({\h x}^{k+1},{\h y}^{k+1},{\h z}^{k+1}) = {\cal L}_{\cal A}({\h x}^{k+1},{\h y}^{k+1},{\h z}^k)+\frac{1}{\beta}\|{\h z}^k-{\h z}^{k+1}\|^2.$
Combining above three inequalities with (\ref{zsub-ADMMI}), we have that
\begin{align}\label{sixpsix} {\cal L}_{\cal A}({\h x}^{k+1},{\h y}^{k+1},{\h z}^{k+1})
\le {\cal L}_{\cal A}({\h x}^{k},{\h y}^{k},{\h z}^{k})-(\beta/2-L^2/\beta)\|{\h y}^k-
{\h y}^{k+1}\|^2. \end{align}
Next, we show that
\begin{eqnarray}\label{Teq} {\cal T}^{k+1} \le {\cal L}_{\cal A}({\h x}^{k+1},{\h y}^{k+1},{\h z}^{k+1}) +\frac{L}{2}\|{\h x}^{k+1}-{\h y}^{k+1}\|^2.\end{eqnarray}
Recall the definition ${\cal T}^{k+1}$ in (\ref{shMF}) and ${\cal L}_{\cal A}({\h x}^{k+1},{\h y}^{k+1},{\h z}^{k+1})$ in  (\ref{eq:augL}).
To show (\ref{Teq}),
we only need to prove that
\begin{eqnarray*} \frac{1}{2}\|A{\h x}^{k+1}-{\h b}\|^2 \le  \frac{1}{2}\|A{\h y}^{k+1}-{\h b}\|^2 + ({\h z}^{k+1})^\top ({\h x}^{k+1}-{\h y}^{k+1})+\frac{L}{2}\|{\h x}^{k+1}-{\h y}^{k+1}\|^2. \end{eqnarray*}
Invoking the optimality condition of (\ref{ysub-ADMMI}),  it leads to
${\h z}^{k+1}=\nabla (\frac{1}{2}\|A{\h y}^{k+1}-{\h b}\|^2)$.
By using this fact and $L=\sigma_{\max}(A^\top A)$, the above inequality
follows directly.
Consequently,
\begin{eqnarray}&&{\cal T}^{k+1}\le  {\cal L}_{\cal A}({\h x}^{k+1},{\h y}^{k+1},{\h z}^{k+1}) +\frac{L}{2}\|{\h x}^{k+1}-{\h y}^{k+1}\|^2\nn\\
&&\le {\cal L}_{\cal A}({\h x}^{k},{\h y}^{k},{\h z}^{k}) -\frac{3L}{8}\|{\h y}^k-{\h y}^{k+1}\|^2,\label{tmp1}
 \end{eqnarray}
 where the first inequality is due to (\ref{Teq}), the second is due to (\ref{zsub-ADMMI}), (\ref{sixpsix}), (\ref{boundual}) and $\beta>2L$. Next, we have that
${\cal T}^k\ge{\cal L}_{\cal A}({\h x}^{k},{\h y}^{k},{\h z}^{k}).$
Combining (\ref{tmp1}) with the above inequality, the assertion holds with $c_1=\frac{3L}{8}$.
\end{proof}

\begin{lemma}\label{lemma45}
	Let $\{\h w^k\}$ be the sequence generated by  ADMM$_p^+$. Then there exists  a constant $c_2>0$ such that
${\text{\rm dist}}(\h 0,\partial{\cal T}(\h x^{k+1},\h y^{k+1}))\le c_2\|\h y^{k+1}-\h y^k\|_2.$
\end{lemma}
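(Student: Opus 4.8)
The plan is to bound $\mathrm{dist}(\mathbf 0,\partial\mathcal T(\mathbf x^{k+1},\mathbf y^{k+1}))$ by exhibiting an explicit element of the limiting subdifferential $\partial\mathcal T(\mathbf x^{k+1},\mathbf y^{k+1})$ and estimating its norm. Since $\mathcal T(\mathbf x,\mathbf y)=\gamma\frac{\|\mathbf x\|_1}{\|\mathbf x\|_2}+\iota_{\mathcal X}(\mathbf x)+\frac12\|A\mathbf x-\mathbf b\|^2+\frac\beta2\|\mathbf x-\mathbf y\|^2$, the sum rule (valid here because the smooth terms add cleanly) gives, in the $\mathbf x$-block,
\begin{eqnarray*}
\partial_{\mathbf x}\mathcal T(\mathbf x^{k+1},\mathbf y^{k+1})\ni \gamma\Big(\frac{\mathbf 1}{\|\mathbf x^{k+1}\|_2}-\frac{\|\mathbf x^{k+1}\|_1}{\|\mathbf x^{k+1}\|_2^3}\mathbf x^{k+1}\Big)+\mathbf d+A^\top(A\mathbf x^{k+1}-\mathbf b)+\beta(\mathbf x^{k+1}-\mathbf y^{k+1}),
\end{eqnarray*}
for a suitable $\mathbf d\in\partial\iota_{\mathcal X}(\mathbf x^{k+1})$, while the $\mathbf y$-block is simply $\beta(\mathbf y^{k+1}-\mathbf x^{k+1})$.

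First I would use the optimality condition of the $\mathbf x$-subproblem \eqref{xsub-ADMMI}. Since $\mathbf x^{k+1}\in\prox_{[\frac\gamma\beta(L_1/L_2)^+]}(\mathbf y^k-\mathbf z^k/\beta)$, there is $\mathbf d\in\partial\iota_{\mathcal X}(\mathbf x^{k+1})$ with
\begin{eqnarray*}
\mathbf 0=\gamma\Big(\frac{\mathbf 1}{\|\mathbf x^{k+1}\|_2}-\frac{\|\mathbf x^{k+1}\|_1}{\|\mathbf x^{k+1}\|_2^3}\mathbf x^{k+1}\Big)+\mathbf d+\mathbf z^k+\beta(\mathbf x^{k+1}-\mathbf y^k).
\end{eqnarray*}
Subtracting this from the displayed $\mathbf x$-block element, the singular $L_1/L_2$ gradient and $\mathbf d$ cancel, leaving an $\mathbf x$-component equal to $A^\top(A\mathbf x^{k+1}-\mathbf b)-\mathbf z^k-\beta(\mathbf x^{k+1}-\mathbf y^k)+\beta(\mathbf x^{k+1}-\mathbf y^{k+1})$. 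Now I invoke the optimality condition of the $\mathbf y$-subproblem established in the proof of Lemma \ref{theo49}, namely $\mathbf z^{k+1}=A^\top(A\mathbf y^{k+1}-\mathbf b)$, together with the multiplier update \eqref{zsub-ADMMI} $\mathbf z^{k+1}=\mathbf z^k+\beta(\mathbf x^{k+1}-\mathbf y^{k+1})$. Substituting $\mathbf z^k=\mathbf z^{k+1}-\beta(\mathbf x^{k+1}-\mathbf y^{k+1})$ makes the $\beta(\mathbf x^{k+1}-\mathbf y^k)$ and $\beta(\mathbf x^{k+1}-\mathbf y^{k+1})$ pieces recombine, and the $\mathbf x$-component collapses to $A^\top(A\mathbf x^{k+1}-\mathbf b)-A^\top(A\mathbf y^{k+1}-\mathbf b)=A^\top A(\mathbf x^{k+1}-\mathbf y^{k+1})$, whose norm is at most $L\|\mathbf x^{k+1}-\mathbf y^{k+1}\|_2$ with $L=\sigma_{\max}(A^\top A)$.

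It remains to control $\|\mathbf x^{k+1}-\mathbf y^{k+1}\|_2$ by $\|\mathbf y^{k+1}-\mathbf y^k\|_2$; this uses \eqref{zsub-ADMMI} to write $\mathbf x^{k+1}-\mathbf y^{k+1}=(\mathbf z^{k+1}-\mathbf z^k)/\beta$ and then the bound \eqref{boundual} from Lemma \ref{theo49}, $\|\mathbf z^{k+1}-\mathbf z^k\|_2\le L\|\mathbf y^{k+1}-\mathbf y^k\|_2$, giving $\|\mathbf x^{k+1}-\mathbf y^{k+1}\|_2\le\tfrac{L}\beta\|\mathbf y^{k+1}-\mathbf y^k\|_2$. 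Assembling the two blocks, $\mathrm{dist}(\mathbf 0,\partial\mathcal T(\mathbf x^{k+1},\mathbf y^{k+1}))\le\big(L\cdot\tfrac{L}{\beta}+\beta\cdot\tfrac{L}{\beta}\big)\|\mathbf y^{k+1}-\mathbf y^k\|_2=(L^2/\beta+L)\|\mathbf y^{k+1}-\mathbf y^k\|_2$, so $c_2=L^2/\beta+L$ works. The main obstacle is purely bookkeeping: one must handle the singular term $\gamma\big(\tfrac{\mathbf 1}{\|\mathbf x\|_2}-\tfrac{\|\mathbf x\|_1}{\|\mathbf x\|_2^3}\mathbf x\big)$ and the normal-cone element $\mathbf d$ carefully enough to see they cancel exactly between the subdifferential inclusion and the $\mathbf x$-subproblem optimality condition — this requires $\mathbf x^{k+1}\neq\mathbf 0$ (so $\mathcal T$ is differentiable in $\mathbf x$ away from the $\iota_{\mathcal X}$ term there), which should be argued or assumed as part of the standing iterate analysis.
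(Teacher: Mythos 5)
Your overall route is the right one, and it is essentially the argument the paper has in mind (the paper does not write a proof at all; it defers to \cite[Lemma 5.7]{Tao20}, which proceeds exactly by combining the $\mathbf{x}$-subproblem stationarity, the identity $\mathbf{z}^{k+1}=A^\top(A\mathbf{y}^{k+1}-\mathbf{b})$, the update (\ref{zsub-ADMMI}), and the bound (\ref{boundual})). However, your key algebraic step is wrong as stated: the $\mathbf{x}$-component does \emph{not} collapse to $A^\top A(\mathbf{x}^{k+1}-\mathbf{y}^{k+1})$. Substituting $\mathbf{z}^k=\mathbf{z}^{k+1}-\beta(\mathbf{x}^{k+1}-\mathbf{y}^{k+1})$ and $\mathbf{z}^{k+1}=A^\top(A\mathbf{y}^{k+1}-\mathbf{b})$ into $A^\top(A\mathbf{x}^{k+1}-\mathbf{b})-\mathbf{z}^k-\beta(\mathbf{x}^{k+1}-\mathbf{y}^k)+\beta(\mathbf{x}^{k+1}-\mathbf{y}^{k+1})$ leaves
$A^\top A(\mathbf{x}^{k+1}-\mathbf{y}^{k+1})+\beta(\mathbf{x}^{k+1}-\mathbf{y}^{k+1})-\beta(\mathbf{y}^{k+1}-\mathbf{y}^k)$;
the two $\beta$-terms you claim cancel do not (they differ by $\beta(\mathbf{y}^k-\mathbf{y}^{k+1})$ plus an extra copy of $\beta(\mathbf{x}^{k+1}-\mathbf{y}^{k+1})$). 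The lemma still follows, because both leftover terms are controlled by the very estimates you already invoke: $\beta\|\mathbf{x}^{k+1}-\mathbf{y}^{k+1}\|_2=\|\mathbf{z}^{k+1}-\mathbf{z}^k\|_2\le L\|\mathbf{y}^{k+1}-\mathbf{y}^k\|_2$ and trivially $\beta\|\mathbf{y}^{k+1}-\mathbf{y}^k\|_2$. So the conclusion holds, but with a larger constant, e.g. $c_2=L^2/\beta+2L+\beta$ rather than your $L^2/\beta+L$; you should redo the bookkeeping rather than assert an exact cancellation.

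Two smaller points. First, your caveat about $\mathbf{x}^{k+1}\neq\mathbf{0}$ is legitimate: the smooth representation $\gamma\bigl(\frac{\mathbf{1}}{\|\mathbf{x}\|_2}-\frac{\|\mathbf{x}\|_1}{\|\mathbf{x}\|_2^3}\mathbf{x}\bigr)$ and the clean cancellation with the subproblem optimality condition require a nonzero iterate; in this nonnegative setting $\mathbf{x}^{k+1}=\mathbf{0}$ can only occur when $\mathbf{y}^k-\mathbf{z}^k/\beta\in\mathbb{R}^n_-$, and the paper (like the cited proof) implicitly works at nonzero iterates, so flagging this is consistent with, not weaker than, the paper. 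Second, the estimate (\ref{boundual}) used for $\|\mathbf{z}^{k+1}-\mathbf{z}^k\|_2$ relies on $\mathbf{z}^k=A^\top(A\mathbf{y}^k-\mathbf{b})$, i.e. it is available for $k\ge 1$; this matches the paper's own usage and is harmless, but worth a word if you write the proof out in full.
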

\begin{proof} The proof  is similar to \cite[Lemma 5.7]{Tao20} and thus omitted. \end{proof}

Next, we present   the subsequential convergence of ADMM$_p^+$ under the boundedness of $\{\h x^k\}$ which
  is a standard assumption   to ensure  existence of
accumulation point \cite{ZengYuPong20,Attouch13}. The boundedness of $\{\h x^k\}$ can be guaranteed  by the boundedness of  the set of $\{{\h x}\in{\cal X}|F({\h x})\le F({\h x}^0)\}$ which
can be further ensured by  no nonnegative vectors in ${\cal N}(A)$.
The proof of the following theorem is standard  \cite{Tao20,LiPong15} and  thus omitted.

\begin{theorem}\label{theo2}
		Let $\{{\h w}^k\}$ be the sequence generated by   ADMM$_p^+$.	If $\{\h x^k\}$ is bounded and $\beta>2L$, we have the following	statements:
		\begin{itemize}
			\item[(i)] $\lim_{k\to\infty}\|{\h x}^k-{\h x}^{k+1}\|=0$, $\lim_{k\to\infty}\|{\h y}^k - \h y^{k+1}\|=0$,
and $\lim_{k\to\infty}\|\h z^k-\h z^{k+1}\|=0$;
			\item[(ii)] The sequence $\{{\h w}^k\}$  has at least one accumulation point ${\h w}^\infty$.

		\end{itemize}
	\end{theorem}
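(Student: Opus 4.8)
The plan is to derive both claims from the monotone descent of the merit function established in Lemma \ref{theo49}. First I would observe that Lemma \ref{theo49} gives, for $\beta > 2L$, a constant $c_1 = 3L/8 > 0$ with ${\cal T}^{k+1} \le {\cal T}^k - c_1\|\h y^k - \h y^{k+1}\|_2^2$; hence $\{{\cal T}^k\}$ is nonincreasing. Next I would show $\{{\cal T}^k\}$ is bounded below: since $\{\h x^k\}$ is bounded, the term $\gamma\|\h x^k\|_1/\|\h x^k\|_2 + \iota_{\cal X}(\h x^k)$ stays bounded below (it is $\ge \gamma$ on ${\cal X}\setminus\{\bf 0\}$ and the convention makes it $\gamma$ at $\bf 0$), $\frac12\|A\h x^k - \h b\|^2 \ge 0$, and $\frac{\beta}{2}\|\h x^k - \h y^k\|_2^2 \ge 0$, so ${\cal T}^k \ge \gamma \cdot \tfrac{1}{?}$ — more carefully, ${\cal T}^k \ge 0$ suffices once we note each summand is nonnegative except possibly the first, which is bounded below by $0$ under the convention. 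A monotone sequence bounded below converges, so ${\cal T}^k \downarrow {\cal T}^\infty$ for some finite ${\cal T}^\infty$. Telescoping the descent inequality gives $\sum_{k} c_1 \|\h y^k - \h y^{k+1}\|_2^2 \le {\cal T}^0 - {\cal T}^\infty < \infty$, which forces $\|\h y^k - \h y^{k+1}\|_2 \to 0$. Then $\|\h z^k - \h z^{k+1}\|_2 \to 0$ follows immediately from (\ref{boundual}), i.e.\ $\|\h z^k - \h z^{k+1}\|_2 \le L\|\h y^k - \h y^{k+1}\|_2$, and $\|\h x^k - \h x^{k+1}\|_2 \to 0$ would follow from combining $\h x^{k+1} - \h y^{k+1} = \frac{1}{\beta}(\h z^{k+1} - \h z^k)$ (from (\ref{zsub-ADMMI})) with the vanishing of successive $\h y$ and $\h z$ differences: write $\h x^{k+1} - \h x^k = (\h x^{k+1} - \h y^{k+1}) + (\h y^{k+1} - \h y^k) + (\h y^k - \h x^k)$, all three pieces tending to zero.

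For part (ii), boundedness of $\{\h x^k\}$ is assumed. Boundedness of $\{\h y^k\}$ then follows since $\h x^{k+1} - \h y^{k+1} = \frac{1}{\beta}(\h z^{k+1} - \h z^k) \to 0$ and $\{\h x^k\}$ is bounded, so $\{\h y^k\}$ is bounded too; alternatively one bounds $\{\h y^k\}$ directly from $\h z^{k+1} = A^\top(A\h y^{k+1} - \h b)$ together with boundedness of $\{\h z^k\}$ on the range of $A^\top$. Boundedness of $\{\h z^k\}$ follows from $\h z^{k+1} = A^\top(A\h y^{k+1}-\h b)$ and boundedness of $\{\h y^k\}$. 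Thus the whole sequence $\{\h w^k\} = \{(\h x^k,\h y^k,\h z^k)\}$ is bounded, and by Bolzano--Weierstrass it has a convergent subsequence, whose limit $\h w^\infty$ is an accumulation point.

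The routine parts are the telescoping argument and the chain of triangle inequalities linking the successive differences; these I would state without grinding through. The one genuinely delicate point is the lower-boundedness of $\{{\cal T}^k\}$: I must confirm that the scale-invariant term does not blow down to $-\infty$, which is handled by the standing convention $\|{\bf 0}\|_1/\|{\bf 0}\|_2 = 1$ and the bound $\|\h x\|_1/\|\h x\|_2 \ge 1$ on $\mathbb{R}^n \setminus \{\bf 0\}$, so that term is $\ge \gamma > 0$ uniformly, making ${\cal T}^k \ge \gamma$ for all $k$ — no appeal to boundedness of $\{\h x^k\}$ is even needed for this, which is reassuring. I would also double-check the interlocking of boundedness of $\{\h y^k\}$ and $\{\h z^k\}$ to avoid a circular argument: the clean route is $\{\h x^k\}$ bounded $\Rightarrow$ (via $\h x^{k+1}-\h y^{k+1}\to 0$) $\{\h y^k\}$ bounded $\Rightarrow$ (via $\h z^{k+1}=A^\top(A\h y^{k+1}-\h b)$) $\{\h z^k\}$ bounded, which is acyclic.
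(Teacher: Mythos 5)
Your proposal is correct and follows exactly the standard route the paper invokes when it omits this proof (citing \cite{Tao20,LiPong15}): sufficient descent of $\cal T$ from Lemma \ref{theo49} plus the lower bound ${\cal T}^k\ge\gamma$, telescoping to get $\|\h y^k-\h y^{k+1}\|\to 0$, then (\ref{boundual}) and the update (\ref{zsub-ADMMI}) to transfer this to the $\h z$- and $\h x$-differences, and finally boundedness of $\{\h y^k\}$, $\{\h z^k\}$ from $\h x^k-\h y^k\to 0$ and $\h z^{k+1}=A^\top(A\h y^{k+1}-\h b)$. The momentary hesitation about the lower bound is resolved correctly in your last paragraph, so no gap remains.
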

 Next, we show the global convergence of ADMM$_p^+$ to a d-stationary point by assuming $A^\top{\h b}
\not\le {\bf 0}$, the boundedness of  $\{{\h x}^k\}$ and $\beta$  sufficiently large. The first assumption is
to guarantee $\bf 0$ not being a accumulation point.
The latter two assumptions are usually imposed
for  the  convergence  \cite{Tao20,LiPong15,HongLuoRazaviyayn16}.

\begin{theorem}\label{thm:global} 
	Let  $\{\h w^k\}$ be the sequence generated by   ADMM$_p^+$.
If $A^\top{\h b}
\not\le {\bf 0}$, $\beta>2L,$ and $\{\h x^k\}$ is bounded, then (i) any accumulation point of $\{\h x^k\}$  is a d-stationary point  of (\ref{L1o2uncon}), (ii) $\{{\h w}^k\}$ has finite length, i.e.
$\sum_{k=1}^\infty\|{\h w}^{k+1}-{\h w}^k\|<\infty,$
and hence $\{{\h w}^k\}$ converges to  a stationary point ${\h w}^\infty:=({\h x}^\infty,{\h y}^\infty,{\h z}^\infty)$ satisfying
 \begin{eqnarray}
 \label{dtationaryP}
	\left\{\begin{array}{l}
	({\h x}-{\h x}^\infty)^\top\left( \gamma(\frac{{\bf 1}}{\|{\h x}^\infty\|_2}-\frac{\|{\h x}^\infty\|_1}{\|{\h x}^\infty\|_2}{\h x}^\infty)+ {\h z}^\infty \right)\ge 0\;\;\;\;\forall {\h x}\in{\cal X},\\
	A^\top(A {\h y}^\infty -{\h b}) -{\h z}^\infty=0,\\
	{\h x}^\infty = {\h y}^\infty.
	\end{array}\right.
\end{eqnarray}
\end{theorem}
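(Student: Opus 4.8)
The plan is to run the by-now-standard descent-method scheme for Kurdyka--\L ojasiewicz (KL) functions on the merit function $\mathcal{T}$, fed by the three ingredients already in place: the sufficient decrease of Lemma~\ref{theo49}, the subgradient bound of Lemma~\ref{lemma45}, and the subsequential convergence of Theorem~\ref{theo2}. A tool I would use throughout is the optimality condition of the $\h y$-subproblem, ${\h z}^{k+1}=A^\top(A{\h y}^{k+1}-{\h b})$ (recorded in the proof of Lemma~\ref{theo49}), from which $\|{\h z}^k-{\h z}^{k+1}\|\le L\|{\h y}^k-{\h y}^{k+1}\|$ follows, cf.\ $(\ref{boundual})$. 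By Theorem~\ref{theo2}(ii) the bounded sequence $\{{\h w}^k\}$ has an accumulation point ${\h w}^\infty=({\h x}^\infty,{\h y}^\infty,{\h z}^\infty)$; since the successive differences tend to zero, both ${\h w}^{k_j}$ and the shifted ${\h w}^{k_j+1}$ converge to ${\h w}^\infty$ along the same subsequence, ${\h x}^\infty={\h y}^\infty$, and ${\h z}^\infty=A^\top(A{\h x}^\infty-{\h b})$.

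First I would show that ${\h x}^\infty\neq{\bf 0}$; this is where the hypothesis $A^\top{\h b}\not\le{\bf 0}$ is essential. Completing the square in $(\ref{eq:augL})$, the $\h x$-update minimizes $\Psi_k({\h x}):=\gamma\frac{\|{\h x}\|_1}{\|{\h x}\|_2}+\iota_{{\mathbb R}^n_+}({\h x})+\frac{\beta}{2}\|{\h x}-{\h q}^k\|^2$ over ${\mathbb R}^n$, with ${\h q}^k:={\h y}^k-{\h z}^k/\beta$. If ${\h x}^\infty={\bf 0}$ then ${\h z}^\infty=-A^\top{\h b}$, so ${\h q}^{k_j}\to A^\top{\h b}/\beta=:{\h q}^\infty\not\le{\bf 0}$; fixing an index $i^*$ with $q^\infty_{i^*}>0$ and the feasible point ${\h u}$ with $u_{i^*}=q^\infty_{i^*}$, $u_j=0$ otherwise, optimality gives $\Psi_{k_j}({\h x}^{k_j+1})\le\Psi_{k_j}({\h u})\to\gamma+\tfrac{\beta}{2}\bigl(\|{\h q}^\infty\|^2-(q^\infty_{i^*})^2\bigr)$, whereas ${\h x}^{k_j+1}\to{\bf 0}$ together with $\frac{\|{\h x}\|_1}{\|{\h x}\|_2}\ge1$ (including the convention at the origin) and lower semicontinuity of the quadratic force $\liminf_j\Psi_{k_j}({\h x}^{k_j+1})\ge\gamma+\tfrac{\beta}{2}\|{\h q}^\infty\|^2$, a contradiction since $(q^\infty_{i^*})^2>0$. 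Hence ${\h x}^\infty\neq{\bf 0}$, which is exactly what rules out ${\bf 0}$ as an accumulation point.

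For part~(i) I would use that on ${\mathbb R}^n_+$ one has $\frac{\|{\h x}\|_1}{\|{\h x}\|_2}=\frac{{\bf 1}^\top{\h x}}{\|{\h x}\|_2}$, which is $C^1$ away from the origin, so for $j$ large (where ${\h x}^{k_j+1}\neq{\bf 0}$) the first-order optimality of the $\h x$-subproblem over ${\mathbb R}^n_+$ reads
\[
\bigl\langle{\h x}-{\h x}^{k_j+1},\,\gamma\bigl(\tfrac{{\bf 1}}{\|{\h x}^{k_j+1}\|_2}-\tfrac{\|{\h x}^{k_j+1}\|_1}{\|{\h x}^{k_j+1}\|_2^{3}}{\h x}^{k_j+1}\bigr)+{\h z}^{k_j}+\beta({\h x}^{k_j+1}-{\h y}^{k_j})\bigr\rangle\ge0,\ \ \forall\,{\h x}\in{\mathbb R}^n_+.
\]
Letting $j\to\infty$ and using ${\h x}^{k_j+1}\to{\h x}^\infty\neq{\bf 0}$, ${\h y}^{k_j}\to{\h x}^\infty$, and ${\h z}^{k_j}\to{\h z}^\infty=A^\top(A{\h x}^\infty-{\h b})$ produces exactly $(\ref{viopt})$ with $\mathcal{X}={\mathbb R}^n_+$, so the accumulation point ${\h x}^\infty$ is a d-stationary point of $(\ref{L1o2uncon})$; running this along an arbitrary convergent subsequence proves~(i). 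The same passage to the limit yields the three relations of $(\ref{dtationaryP})$: the first is $(\ref{viopt})$ rewritten via ${\h z}^\infty=A^\top(A{\h y}^\infty-{\h b})$, the second is that identity itself, and the third is ${\h x}^\infty={\h y}^\infty$ (from $(\ref{zsub-ADMMI})$ and $\|{\h x}^k-{\h x}^{k+1}\|\to0$).

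For part~(ii), $\mathcal{T}\ge\gamma$ is bounded below and $\{\mathcal{T}^k\}$ is nonincreasing by Lemma~\ref{theo49}, hence $\mathcal{T}^k\downarrow\mathcal{T}^*$ with $\mathcal{T}^*$ finite; since ${\h x}^\infty\neq{\bf 0}$, $\mathcal{T}$ is continuous at ${\h w}^\infty$ along the iterates, so $\mathcal{T}^*=\mathcal{T}({\h x}^\infty,{\h y}^\infty)$. On the open set $\{{\h x}\neq{\bf 0}\}$ the function $\mathcal{T}$ is semialgebraic (a finite sum of the semialgebraic ratio $\frac{\|{\h x}\|_1}{\|{\h x}\|_2}$, a quadratic in $({\h x},{\h y})$, and the indicator of the semialgebraic set ${\mathbb R}^n_+$), hence satisfies the KL property of Definition~\ref{def2.1} at ${\h w}^\infty$. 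Discarding the trivial case in which $\mathcal{T}^k$ is eventually constant, the standard argument applies: the KL inequality at ${\h w}^k$ combined with Lemma~\ref{lemma45} (index shifted by one) gives $\phi'(\mathcal{T}^k-\mathcal{T}^*)\ge1/(c_2\|{\h y}^k-{\h y}^{k-1}\|)$; feeding this and $\mathcal{T}^k-\mathcal{T}^{k+1}\ge c_1\|{\h y}^k-{\h y}^{k+1}\|^2$ into the concavity estimate for $\phi$ and using $2\sqrt{uv}\le u+v$ yields, for all large $k$,
\[
2\|{\h y}^k-{\h y}^{k+1}\|\le\|{\h y}^{k-1}-{\h y}^k\|+\tfrac{c_2}{c_1}\bigl(\phi(\mathcal{T}^k-\mathcal{T}^*)-\phi(\mathcal{T}^{k+1}-\mathcal{T}^*)\bigr),
\]
and summing (the right side telescopes) gives $\sum_k\|{\h y}^k-{\h y}^{k+1}\|<\infty$. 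Then $(\ref{boundual})$ gives $\sum_k\|{\h z}^k-{\h z}^{k+1}\|<\infty$, and ${\h x}^{k+1}={\h y}^{k+1}+\beta^{-1}({\h z}^{k+1}-{\h z}^k)$ (from $(\ref{zsub-ADMMI})$) gives $\sum_k\|{\h x}^k-{\h x}^{k+1}\|<\infty$; hence $\{{\h w}^k\}$ is Cauchy and converges, and its limit is the accumulation point of part~(i), so it satisfies $(\ref{dtationaryP})$. The two delicate points, and where I expect the main obstacle, are the nondegeneracy step ${\h x}^\infty\neq{\bf 0}$ — in which the convention $\frac{\|{\bf 0}\|_1}{\|{\bf 0}\|_2}=1$, lower semicontinuity, and $A^\top{\h b}\not\le{\bf 0}$ all intervene — and the verification that $\mathcal{T}$ enjoys the KL property at ${\h w}^\infty$, which again relies on ${\h x}^\infty\neq{\bf 0}$ so that the scale-invariant ratio is locally semialgebraic and $\mathcal{T}$ escapes the pathology at the origin; the remaining estimates are routine bookkeeping with the optimality conditions already used in Lemmas~\ref{theo49} and~\ref{lemma45}.
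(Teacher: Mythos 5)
Your proposal is correct, and its overall architecture coincides with the paper's: Lemma~\ref{theo49} (sufficient decrease of $\mathcal{T}$), Lemma~\ref{lemma45} (relative-error bound), Theorem~\ref{theo2} (subsequential convergence), a nondegeneracy step ruling out ${\h x}^\infty={\bf 0}$, passage to the limit in the optimality system to get (\ref{dtationaryP}) and (\ref{viopt}), and then the standard KL finite-length argument (which the paper itself only sketches, citing Li--Pong). Where you genuinely diverge is the nondegeneracy step. The paper argues via the proximal characterization: it shows that ${\bf 0}\in\prox_{[(L_1/L_2)^+/\rho]}({\h q})$ forces ${\h q}\in\mathbb{R}^n_-$ (using the one-sparse structure of the prox from \cite[Theorem 3.2]{Tao20}), invokes outer semicontinuity of the proximal map (\cite[Theorem 1.25]{RockWets}) to pass the inclusion to the limit, and then derives $A^\top{\h b}=\beta{\bm\xi}^\infty\le{\bf 0}$, contradicting the hypothesis. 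You instead complete the square in the $\h x$-update and compare objective values against the fixed one-sparse competitor ${\h u}=q^\infty_{i^*}{\h e}_{i^*}$, using only $\frac{\|{\h x}\|_1}{\|{\h x}\|_2}\ge1$ (with the convention at the origin) and continuity of the quadratic; this is more elementary, bypasses both the prox characterization and the Rockafellar--Wets limit theorem, and isolates exactly where $A^\top{\h b}\not\le{\bf 0}$ enters. The trade-off is symmetric: the paper's route reuses machinery it has already built (Theorem~\ref{Theo21} / \cite{Tao20}), while yours is self-contained and would survive even if the explicit prox formula were unavailable. Your KL verification via semialgebraicity of $\mathcal{T}$ away from the origin is a minor variant of the paper's subanalyticity argument (\cite{BCR98}, \cite[Theorem 3.1]{BDL07}); both are valid. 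One small polish: Definition~\ref{def2.1} is stated for proper closed functions, so rather than restricting $\mathcal{T}$ to the open set $\{{\h x}\neq{\bf 0}\}$ you should, as the paper does with $\tilde{\mathcal{U}}$ and $\iota_{\{\|{\h u}\|\ge\epsilon\}}$, restrict to a closed set avoiding the origin that contains a neighborhood of $({\h x}^\infty,{\h y}^\infty)$; since $\mathcal{T}$ agrees with this closed restriction near the limit point, the KL inequality transfers and the rest of your telescoping estimate, together with (\ref{boundual}) and (\ref{zsub-ADMMI}) to convert $\sum_k\|{\h y}^{k+1}-{\h y}^k\|<\infty$ into summability for ${\h z}$ and ${\h x}$, goes through exactly as you wrote it.
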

\begin{proof}(i)
We first show that
 any accumulation point ${{\h x}^\infty}$ of the sequence $\{{\h x}^k\}$ generated by (\ref{ADMMschI}) cannot be ${\bf 0}$.
Suppose not.  Then, there exists a subsequence of $\{{\h w}^{k_j}\}$ converging to ${\h w}^\infty$  where
 ${\h x}^{k_j}\to{\h x}^\infty={\h 0}$. Thus, ${\h x}^{k_j+1}\rightarrow {\h x}^{\infty}={\bf 0}$ due to Theorem \ref{theo2}.
  Also, one has ${\h y}^{k_j}-\frac{1}{\beta}{\h z}^{k_j}\rightarrow{\bm \xi}^\infty:={\h y}^\infty-\frac{1}{\beta} {\h z}^\infty$.
 Next, we show that ${\bm\xi}^\infty\in{\mathbb R}^n_{-}$.
  In what follows,  we show the solution of (\ref{close}) is ${\bf 0}$ if and only if
 ${\h q}\in{\mathbb R}^n_{-}$ in (\ref{close}).
 For  the ``if" part, it is obviously true. For the ``only if" part,
i.e., if $\bf 0$ is a solution of (\ref{close}), then ${\h q}\in{\mathbb R}^n_{-}$.
Suppose not. Then, ${\h q}\not\in{\mathbb R}^n_{-}$. Thus, there exists at least one index (without loss of generality) $q_1>0$ and
$q_1\ge q_2\ge\cdots\ge q_n$. We define ${\h q}^+=\max({\h q},{\bf 0})$. Thus, ${\h q}^+ \neq{\bf 0}$. According to \cite[Theorem 3.2]{Tao20},
we see that the solution of
 \begin{eqnarray}\label{equproT} {\arg\min}_{{\h x}\in \mathbb{R}^{n}}\left( \frac{\|\h x\|_1}{\|\h x\|_2}+\frac{\rho}{2}\|{\h x}-{\h q}^+\|_2^2\right), \end{eqnarray}
cannot be ${\bf 0}$ since the solution of (\ref{equproT}) is at least one-sparse. It contradicts $\bf 0$ being a solution of (\ref{close}).

It follows from (\ref{xsub-ADMMI}) that
${\h x}^{k_j+1}\!\in\!\prox_{[({L}_1/{L}_2)^+/\rho]}({\h y}^{k_j}-\frac{1}{\beta}{\h z}^{k_j}).$
Taking $j\to\infty$  and  invoking  \cite[Theorem  1.25]{RockWets}, we have
${\h x}^\infty\in\prox_{[({L}_1/{L}_2)^+/\rho]}({\h y}^\infty-\frac{1}{\beta}{\h z}^\infty).$
Consequently, ${\bm\xi}^\infty\le {\bf 0}$ due to ${\h x}^{\infty}={\bf 0}$.
Since ${\h y}^{k_j}\rightarrow{\h y}^\infty= {\h 0}$ due to ${\h x}^\infty-{\h y}^\infty = {\bf 0}$,
 ${\h z}^{k_j}\rightarrow-\beta{\bm \xi^\infty}$.
Invoking ${\h z}^{k_j}=A^\top (A {\h y}^{k_j}-{\h b})$ and letting $j\to\infty$,
it leads to $A^\top {\h b}={\beta}{\bm \xi^\infty}$ which contradicts to $A^\top{\h b}
\not\le {\bf 0}$. Thus, ${\h x}^\infty\neq{\bf 0}$.

Next, we show any accumulation point ${\h x}^\infty$ of $\{\h x^k\}$  is a d-stationary point  of (\ref{L1o2uncon}) with ${\cal X}={\mathbb R}^n_+$.
The sequence of $\{{\h w}^k\}$ is bounded and hence it has a subsequence $\{{\h w}^{k_j}\}$ such that
 ${\h w}^{k_j}\to\h w^\infty$ as $j\to+\infty$.
 From the optimality condition of (\ref{ADMMschI}), we have
 \begin{eqnarray} \label{AlgoKKTAD}
	\left\{\begin{array}{l}
	({\h x}-{\h x}^{k+1})^\top\left(\gamma(\frac{{\bf 1}}{\|{\h x}^{k+1}\|_2}-\frac{\|{\h x}^{k+1}\|_1}{\|{\h x}^{k+1}\|_2}{\h x}^{k+1})+ {\h z}^k+\beta({\h x}^{k+1}-{\h y}^k) \right)\ge 0\;\;\;\;\forall {\h x}\in{\cal X},\\[0.2cm]
	A^\top(A {\h y}^{k+1}-{\h b}) -{\h z}^{k+1}=0,\\[0.2cm]
	\beta({\h x}^{k+1}-{\h y}^{k+1})+{\h z}^k-{\h z}^{k+1}=0.
	\end{array}\right. \nn\\
\end{eqnarray}
The above system is also true when $k:=k_j$. Note that ${\h w}^{k_j+1}\to {\h w}^\infty$ as $j\to+\infty$ due to Theorem \ref{theo2} and ${\h w}^{k_j}\to{\h w}^\infty$.
 Then, taking  limit on both sides of the system (\ref{AlgoKKTAD}) with $k:=k_j$,
 we have that ${\h w}^\infty$ is satisfying  (\ref{dtationaryP})  due to Theorem \ref{theo2} and ${{\h x}^\infty}\neq{\bf 0}$.
By eliminating ${\h y}^\infty$ and ${\h z}^\infty$ from (\ref{dtationaryP}), we have (\ref{viopt}) holds with ${\bar{\h x}}={\h x}^\infty$,
which implies that ${\h x}^\infty$ is a d-stationary point of (\ref{L1o2uncon}) with ${\cal X}={\mathbb R}^n_+$.

(ii)
According to \cite{BCR98}, if at least one of the two subanalytic functions   maps bounded sets to bounded sets, then their sum
is  subanalytic. Since $\frac{1}{2}\|A{\h x}-{\h b}\|_2^2$ is real analytic and maps bounded sets to bounded sets, and  the function $\displaystyle{\gamma\frac{\|\h x\|_1}{\|\h x\|_2}+\iota_{\mathbb R_+^n}}(\h x)+\iota_{\{{\h x}|\|{\h x}\|\ge\varepsilon\}}(\h x)$ is semianalytic (for any sufficiently small $\varepsilon>0$) \cite{ZengYuPong20}, then their sum
is also subanalytic.
Similarly,
the function ${\cal T}({\h x},{\h y})$ defined in (\ref{shMF}) is also subanalytic.
Furthermore, invoking  Lemma \ref{lemma45}, any accumulation point  $({\h x}^\infty,{\h y}^\infty)$  of $\{\h x^k,\h y^k\}$ generated from (\ref{ADMMschI}) satisfies
${\bf 0}\in (\partial_{\h x} {\cal T}({\h x}^\infty,{\h y}^\infty),\partial_{\h y} {\cal T}({\h x}^\infty,{\h y}^\infty))$ with ${\h x}^\infty = {\h y}^\infty.$
Thus, $({\h x}^\infty,{\h y}^\infty)$ can not be $({\bf 0,\bf 0})$.
Define ${\tilde {\cal U}}=\{\h u:=({\h x},{\h y})\in{\mathbb R}^n \times {\mathbb R}^n | \|{\h u}\|_2 \ge \epsilon\}$ with
$0<\epsilon<\frac{1}{2}\|{\h u}^\infty\|_2$.
 Invoking  \cite[Theorem 3.1]{BDL07}, the merit function ${\cal T}(\h x,\h y)|_{\tilde {\cal U}}$
  satisfies the KL property since ${\cal T}(\h x,\h y)|_{{\tilde {\cal U}}}$ is continuous and its domain is closed.
Therefore, ${\cal T}({\h x},{\h y})$ satisfies the KL property at the point $({\h x}^\infty,{\h x}^\infty)$.
The remaining proof is standard and similar to \cite[Theorem 4]{LiPong15}, thus omitted here.
\end{proof}

\section{Numerical results}\label{NumRes}
In this section, we compare  ADMM$_p^+$ with state-of-the-art methods in sparse recovery.
We focus on the sparse recovery problem with the compressive matrix is highly coherent, on which
$L_1$ minimization fails.
All these algorithms are implemented on MATLAB R2016a, and performed on a desktop with Windows 10 and an Intel Core i7-7600U CPU processor (2.80GH) with 16GB memory. The stopping criterion is as follows:
\begin{eqnarray}\label{StopC}
{\tt{ RelChg }}:=\frac{\left\|{\h x}^{k}-{\h x}^{k-1}\right\|_{2}}{\max \left\{\left\|{\h x}^{k-1}\right\|_{2}, 0.1\right\}}<{\tt{ Tol }} \mbox{ or}\; k_{\max }>5n.
 \end{eqnarray}
We set {\tt Tol} as
\begin{eqnarray*}
{\tt {Tol}} = \left\{\begin{array}{ll}
10^{-6}        &\text {if } \sigma=0, \\
0.01 * \sigma  &\text {if } \sigma>0,
\end{array}\right.
\end{eqnarray*}
where $\sigma$ is the variance of the noise $(\sigma=0$ means the noiseless case).
 Two types of sensing matrices are considered:
(I) Oversampled DCT.   $A = [{\bf a}_1,{\bf a}_2,\ldots,{\bf a}_n]\in {\mathbb R}^{m\times n}$
	with each column $ {\bf a}_j:=\frac{1}{\sqrt{m}}\cos \left( \frac{2\pi {\bf w}j}{F}\right)(j=1,\ldots,n),$
	where ${\bf w}\in{\mathbb R}^m$ is an uniformly distribution on $[0,1]$ random vector  and
	$F\in {\mathbb R}_+$ controls the coherence.
(II) Gaussian matrix.  $A$ is subject to ${\cal N}({\bf 0}, \Sigma)$  with
	the covariance matrix given by $\Sigma=\{(1-r)I_n(i=j)+r \}_{i,j}$
	with $1>r>0$.
We generate an $s$-sparse ground truth signal ${\h  x^*}=|{\bar{\h x}}|\in{\mathbb R}_+^n$ with each nonzero entry of ${\bar{\h x}}$ following a Gaussian normal distribution.
\subsection{Algorithmic behaviors}
In the literature, there are some efficient methods applicable to the model (\ref{L1o2uncon}) with ${\cal X}={\mathbb R}^n_+$,
 including General Iterative Shrinkage Thresholding (GIST) (\cite[Algorithm 2]{NG}, \cite[Algorithm 1]{GZLHY})
and monotone accelerated proximal gradient method (APG) with fixed stepsize (APG1) \cite[Algorithm  1]{LinLi15}, monotone APG with line search (APG2)  \cite[Algorithm  2]{LinLi15},
nonmonotone APG with fixed stepsize (APG3) \cite[Algorithm  3]{LinLi15}, nonmonotone APG with line search (APG4) \cite[Algorithm  4]{LinLi15}
and the smoothed $L_1/L_2$ approach (SOOT) proposed in \cite{Audrey15}.
For a fair comparison, we
 incorporate Algorithm \ref{subx:admm:Bes} in each algorithm for computing the proximal operator of $({L}_1/{L}_2)^+$.
We test on
two types of matrices (Gaussian matrix, oversampled DCT) with ground-truth signals of sparsity $15$.
The size of the sensing matrix is $128\times 1024$. We set $\gamma=0.001$ in (\ref{L1o2uncon}), and $\beta=0.025$ in ADMM$_p^+$.
According to the theoretical results in \cite{NG,GZLHY,LinLi15}, each of GIST,
 APG1, APG2, APG3 and APG4 clusters at a critical point.

 In Table \ref{Table6.4}, we test on two types of matrices with three different choices of initial points (the first two in the MATLAB scripts): (1) {\tt rand(n,1)};
 (2) {\tt abs(randn(n,1))}; (3) The solution of $L_1$ minimization.
 For each instance,  we run 20 trials  for all of these
   algorithms and record the average results.
 We report  the computing time in seconds (Time), the objective function value (Obj) and
 the relative error (RErr$:=\frac{\|{\h x}^k-{\h x}^{*}\|_2}{\|{\h x}^{*}\|_2}$) when the stopping criterion (\ref{StopC}) is satisfied.
   Data in this table show that ADMM$_p^+$ converges faster than the other comparing algorithms except
  the cases of $L_1$ solution  respectively under the Gaussian matrix and the oversampled DCT.
   For each  scenario, ADMM$_p^+$ always achieves the lowest quantity of RErr, and its performance is very robust to the choices of
   initial points. This advantage represents another advantage of the proposed ADMM$_p^+$  over the other comparing algorithms,
   such as GIST, various versions of APG and SOOT whose numerical performances are sensitive to the initial  points.
 In Figure \ref{smooth}, we depict RErr   with respect to iteration number from ADMM$_p^+$ with other comparing algorithms.
Each plot in   Figure \ref{smooth} corresponds to the two types of  initial points under two types of compressive matrices: the left is from
 {\tt rand(n,1)} under oversampled DCT matrix and  the right is from {\tt abs(randn(n,1))}  with Gaussian matrix.
 Clearly,  ADMM$_p^+$ converges much faster than the others and  always achieves the lowest quantity of RErr among these comparing algorithms for both cases.
\begin{table}
	\begin{center}
		{\scriptsize\caption{Average computation results generated from different initial points.}\label{Table6.4}}
		{ \scriptsize\vskip -2mm\begin{tabular}{cccccccc}
			\hline
			{}    &  ADMM$_p^+$    & GIST     & APG1    & APG2   & APG3    & APG4   &SOOT   \\
			\hline
			\multicolumn{8}{c}{Gaussian matrix,  initial point: {\tt  rand(n,1)}}\\
			\hline
Obj   &  {\bf 2.38e-03} &1.44e-01 & 2.72e-01 & 3.14e-02 &2.83e-01  &8.18e-03  & 7.99e+01 \\
Time  &  {\bf 0.23 }  & 7.17  &   3.64 &  31.92 &   2.25 &  14.89 &  10.25\\
RErr  &  {\bf 4.91e-05}  & 1.45e-02& 5.53e-03 & 6.18e-03 & 5.76e-03 & 2.64e-03 &2.83e-01 \\
			\hline
\multicolumn{8}{c}{Gaussian matrix,  initial point: {\tt abs(randn(n,1))}}\\
\hline
Obj  & {\bf 2.60e-03} & 1.27e-01   & 3.03e-01& 1.04e-01  & 3.17e-01 & 8.26e-03 &7.89e+01 \\
Time &  {\bf 0.22} &    6.89  &  3.86 &   55.48 &   2.35 &  15.83 &  10.39\\
RErr & {\bf 5.17e-05} & 1.36e-02   & 6.12e-03& 1.23e-02  & 6.26e-03 & 2.18e-03 & 2.83e-01 \\
\hline
\multicolumn{8}{c}{Gaussian matrix,  initial point: $L_1$ solution}\\
			\hline
Obj  & {\bf 3.52e-04} & 3.94e-04  & 3.94e-04  & 3.94e-04 &3.94e-04  &3.93e-04   &  3.82e-04 \\
Time &   0.15 &    0.071 &   0.031  &  0.050  &  {\bf 0.029} &0.033     &  0.63\\
RErr  & {\bf 7.19e-06} & 8.51e-05  & 8.51e-05  & 8.51e-05 &8.51e-05  &8.51e-05   &  8.43e-05 \\
\hline
			\multicolumn{8}{c}{Oversampled DCT, initial point: {\tt  rand(n,1)}}\\
			\hline
Obj & {\bf 1.64e-04}& 6.80e-04 &1.68e-04& 3.84e-04& 1.67e-04& 2.96e-04 &2.10e-03 \\
Time  & {\bf  0.68 }&   6.30 &   1.54 &  24.67&    1.02 &  52.81&    5.43\\
RErr  & {\bf 1.45e-05} &3.73e-02 &1.76e-03 &2.37e-02& 1.56e-03 &1.81e-02 &6.97e-02\\
\hline
	\multicolumn{8}{c}{Oversampled DCT,  initial point: {\tt abs(randn(n,1))}}\\
			\hline
Obj   & {\bf 1.64e-04}  & 6.87e-04 &  1.70e-04 & 3.94e-04  & 1.71e-04  &3.25e-04    &2.14e-03 \\
Time    & {\bf 0.68 }  & 6.40   &  1.44   & 24.53   & 0.82    & 53.20    & 5.09\\
RErr   & {\bf 1.45e-05}  & 3.77e-02 &  2.40e-03 & 2.43e-02  & 2.56e-03  & 2.08e-02   & 7.05e-02 \\
\hline
\multicolumn{8}{c}{Oversampled DCT,  initial point:  $L_1$ solution}\\
			\hline
Obj   & {\bf 1.64e-04}   & {\bf 1.64e-04}    & {\bf 1.64e-04}   & {\bf 1.64e-04 }  &{\bf 1.64e-04}  & {\bf 1.64e-04}   &{\bf 1.64e-04} \\
Time &  0.34    &  {\bf 0.016}     & 0.056     & 0.053     &0.038    & 0.030     & 0.28\\
RErr  & {\bf 1.01e-05}   & 2.28e-05    & 1.29e-05   & 1.57e-05   &1.29e-05  & 2.06e-05   & 2.17e-05 \\
\hline
		\end{tabular}}
	\end{center}
\end{table}

\begin{figure}[t]
	\vspace{0cm}\centering{\vspace{0cm}
		\begin{tabular}{cc}
		\includegraphics[scale = .4]{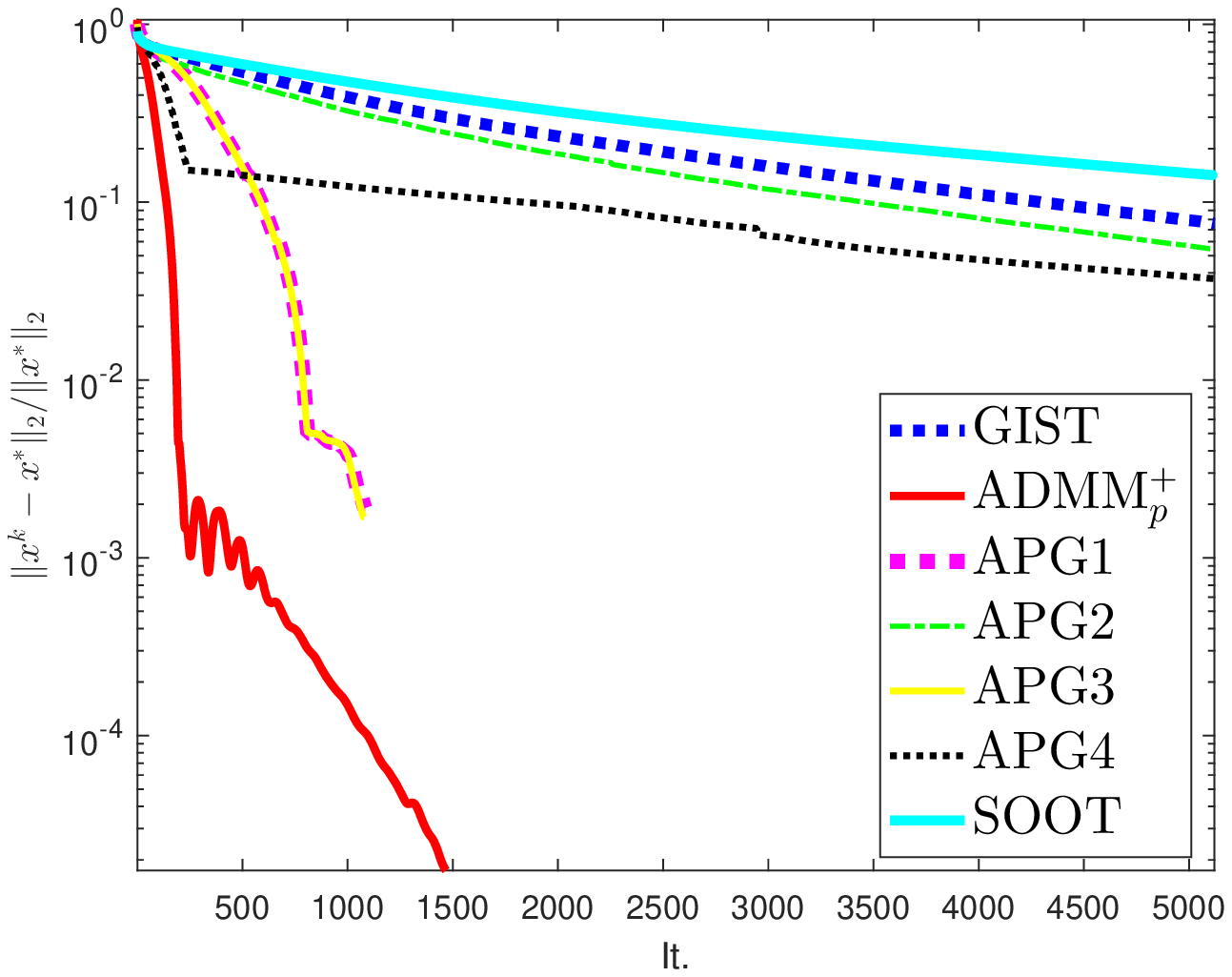}&\includegraphics[scale = .4]{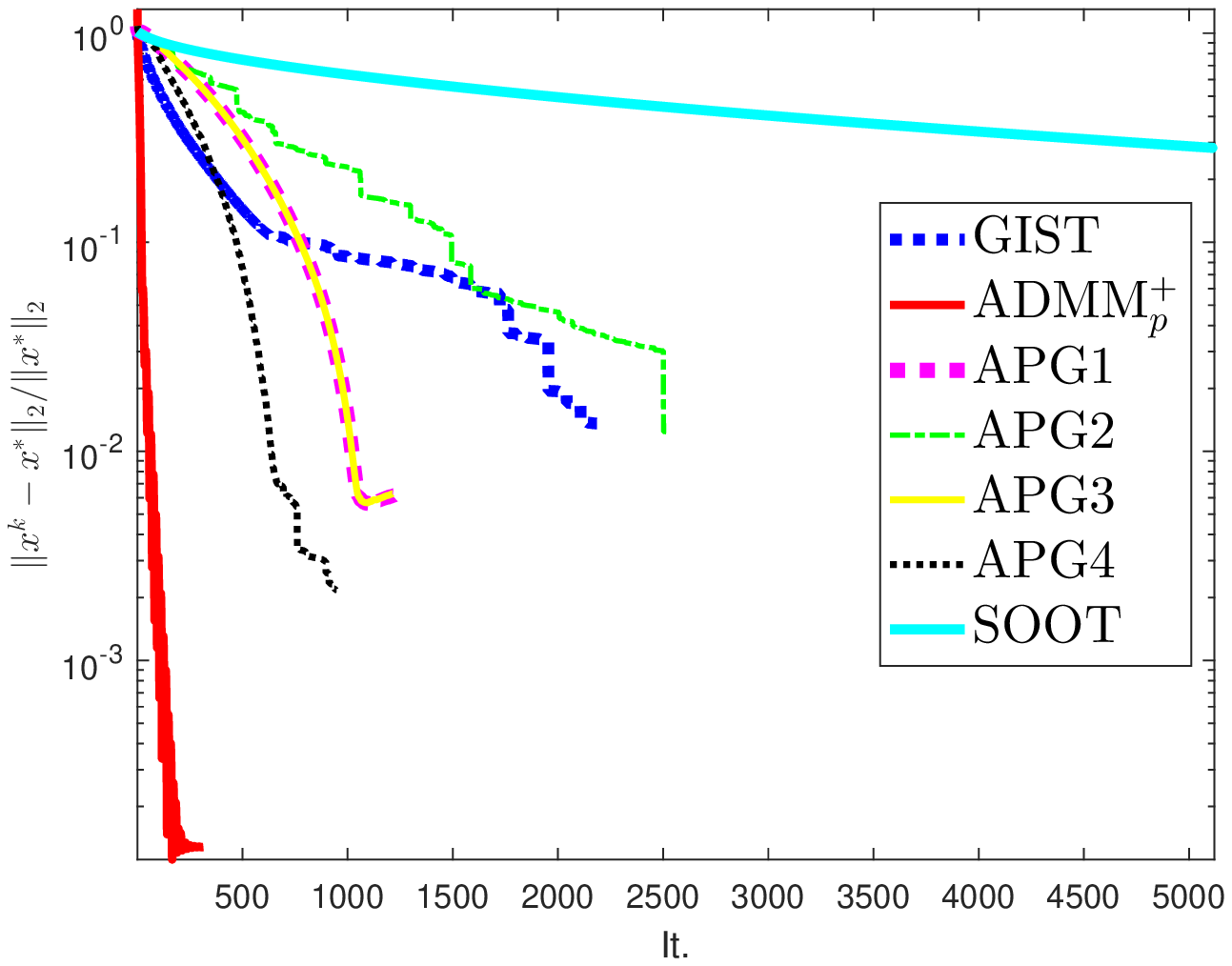}  \\
		\end{tabular}
	} \caption{The evolution of RErr with respect to the iteration number (It.): Initialized from {\tt rand(n,1)} under  oversampled DCT matrix (left), and from {\tt abs(randn(n,1))} under Gaussian matrix (right).
	}\label{smooth}\end{figure}

\begin{figure}[t]
	\vspace{0cm}\centering{\vspace{0cm}
		\begin{tabular}{cc}
			Success rates & Success rates\\
			\includegraphics[scale = .4]{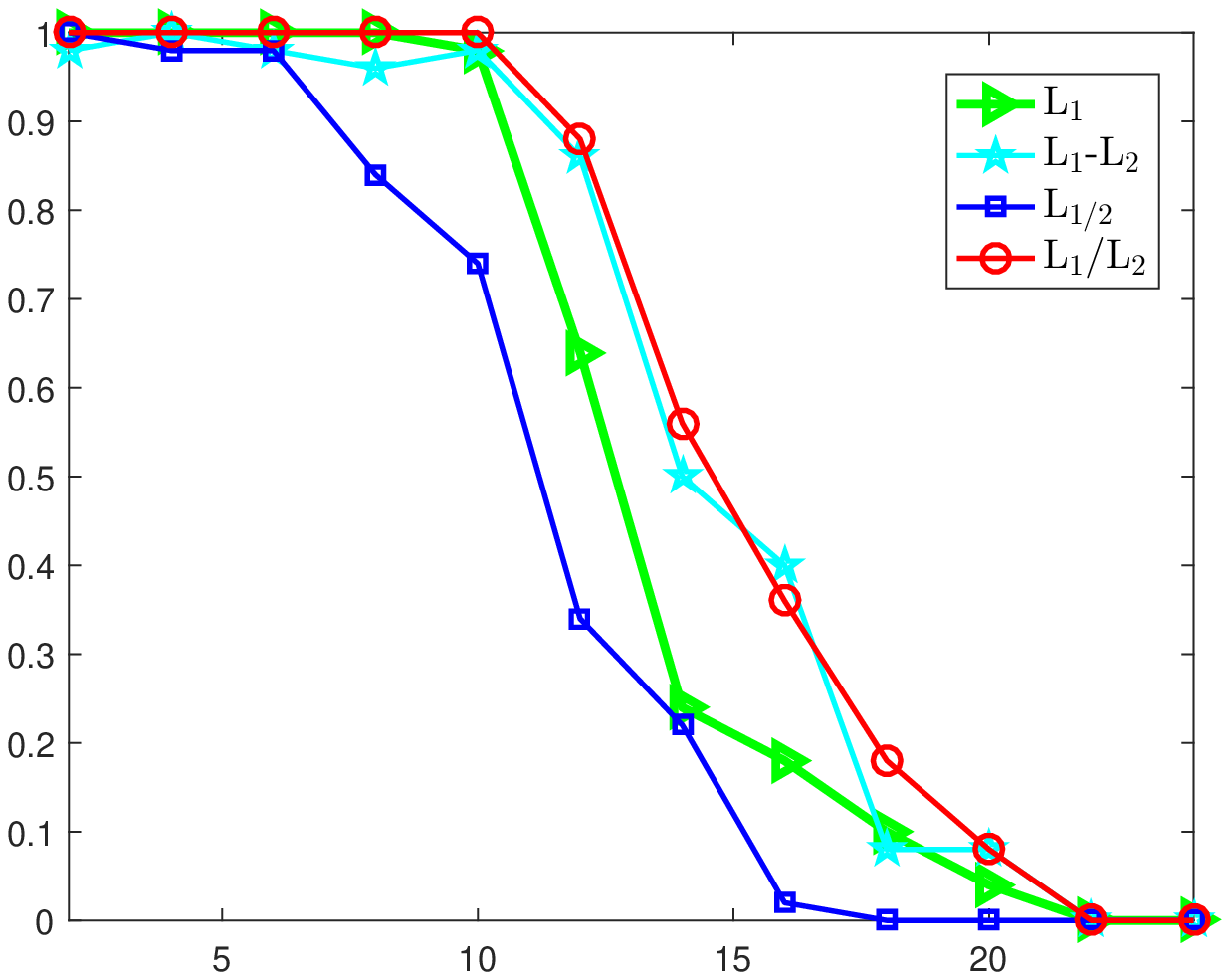}&\includegraphics[scale = .4]{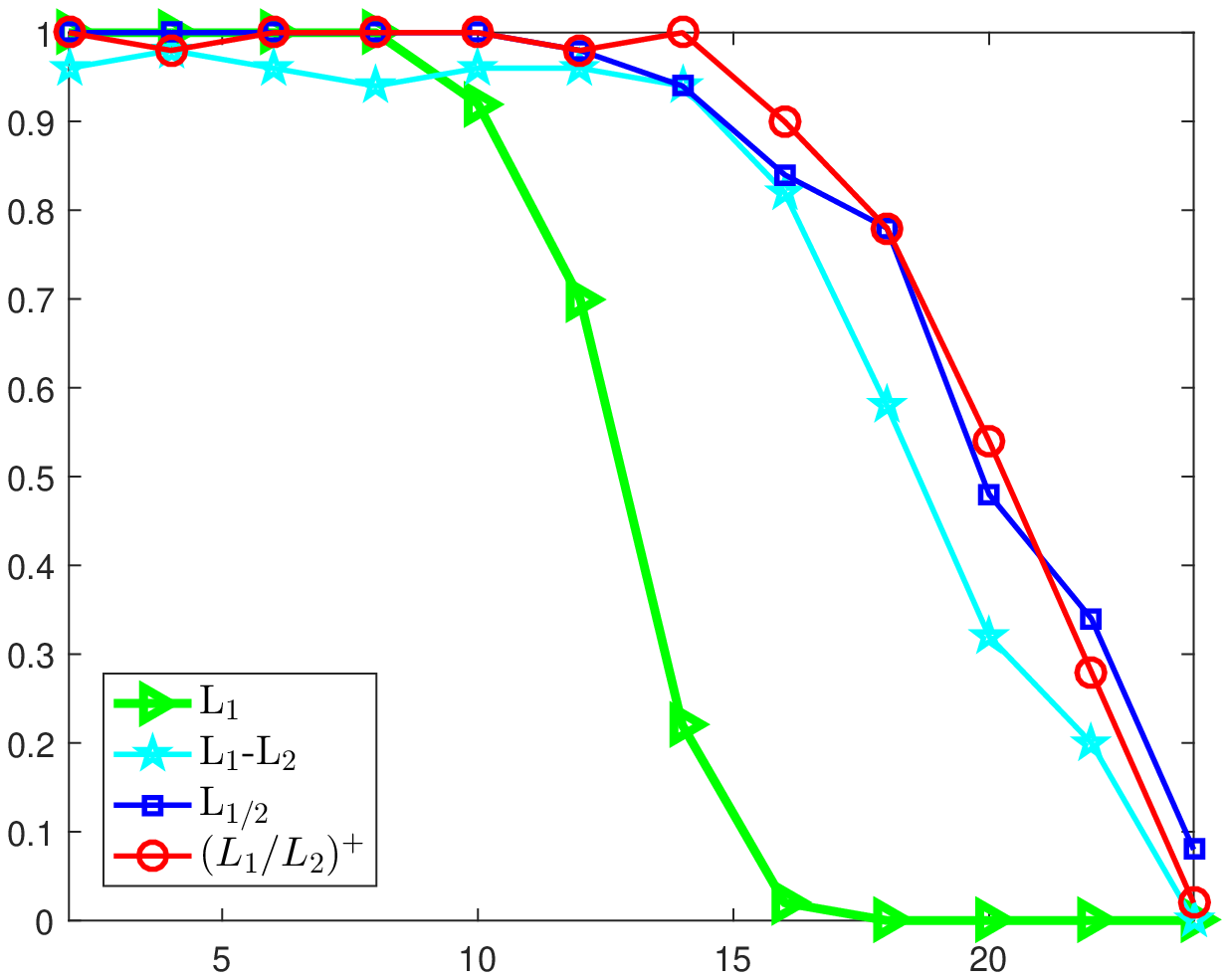}\\
           Algorithm failures & Algorithm failures\\
			\includegraphics[scale = .4]{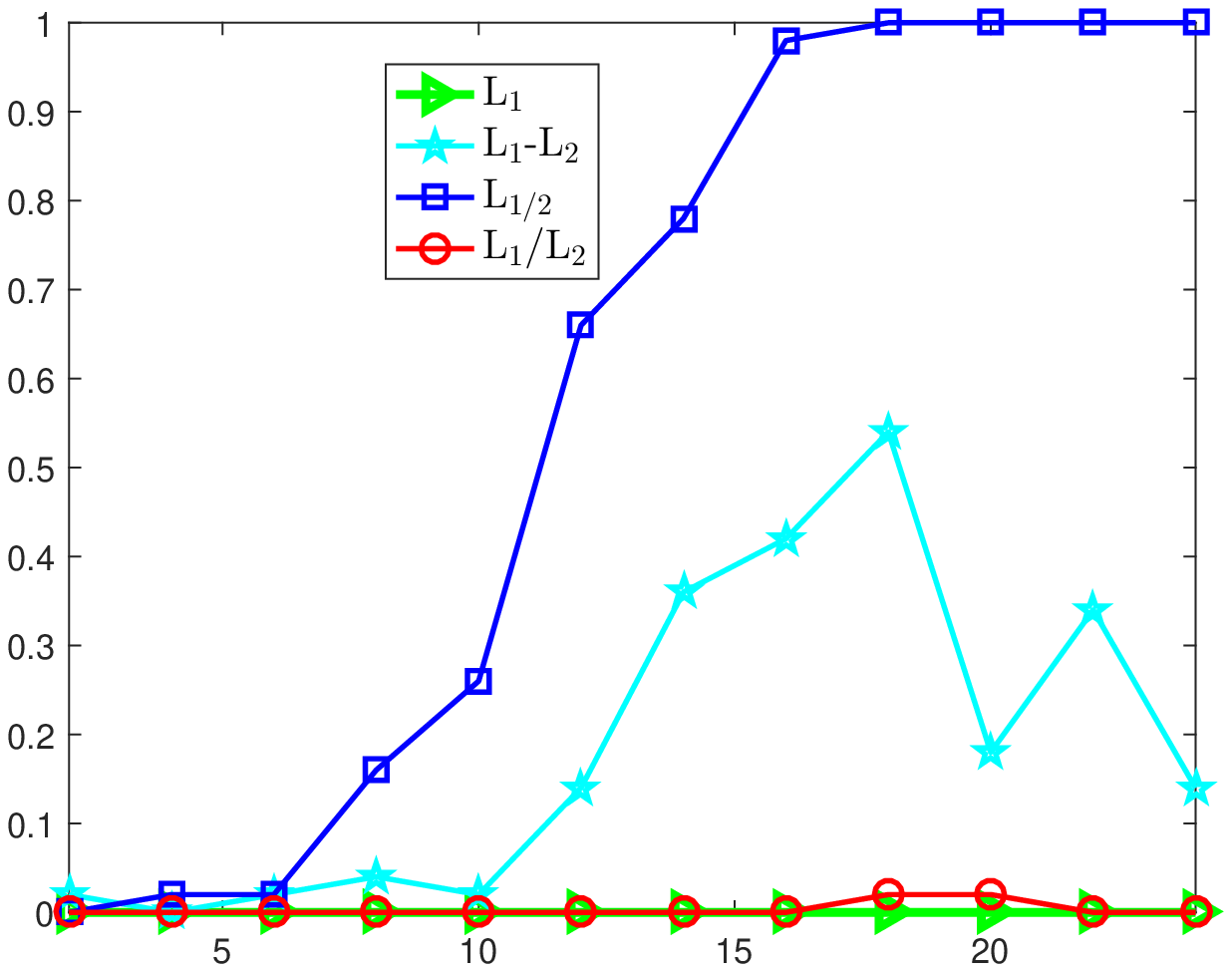}&\includegraphics[scale = .4]{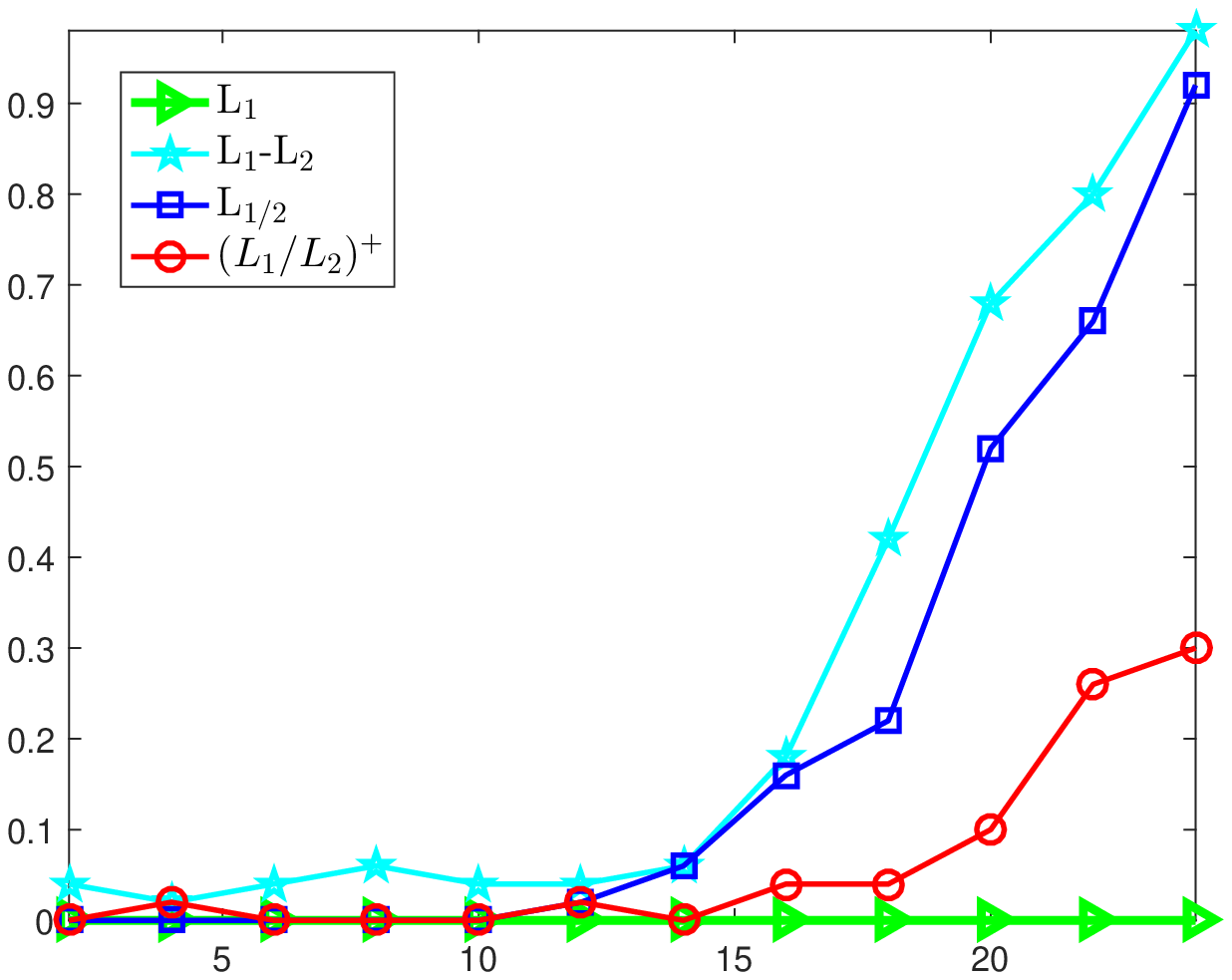}\\
Model failures & Model failures\\
			\includegraphics[scale = .4]{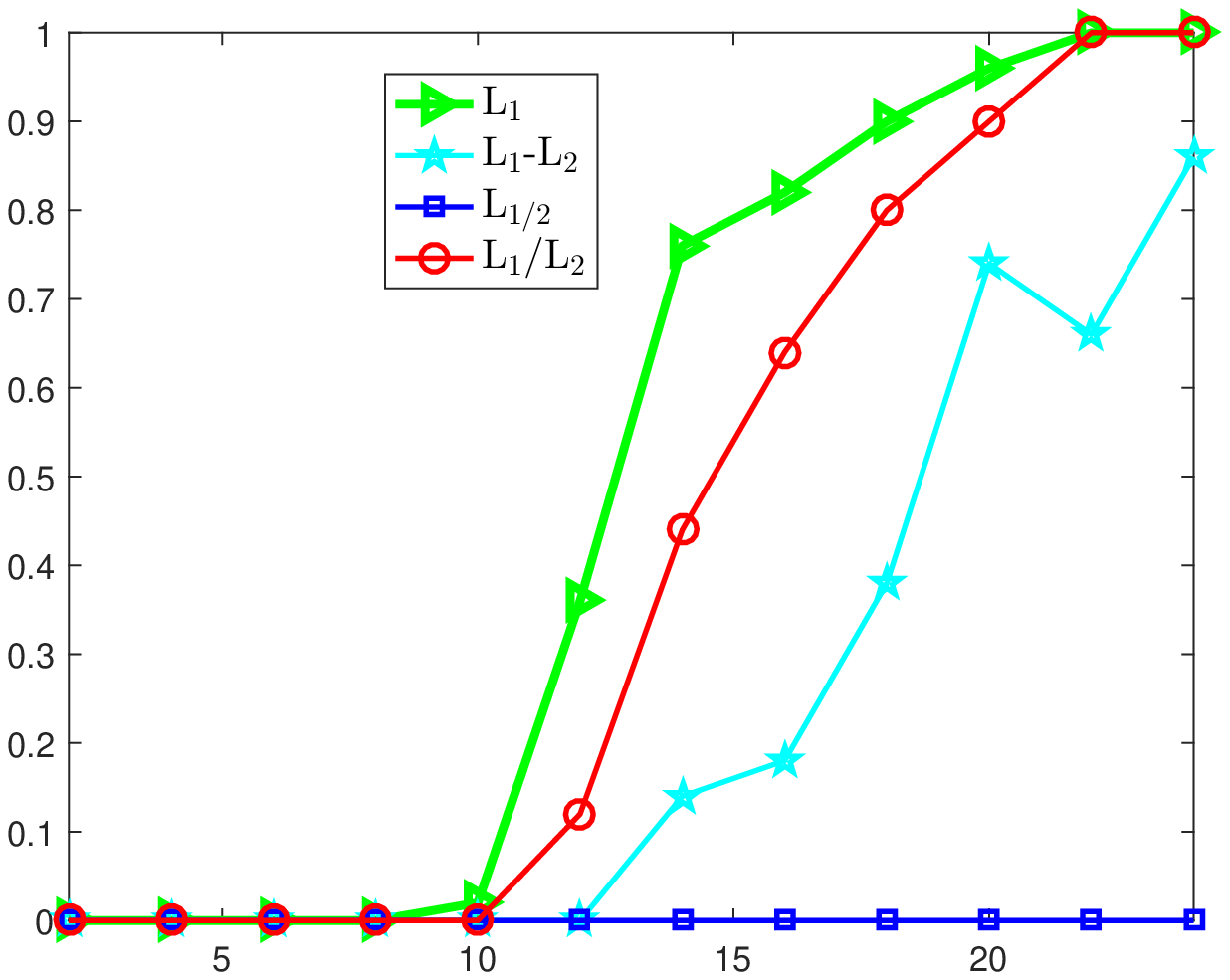}&\includegraphics[scale = .4]{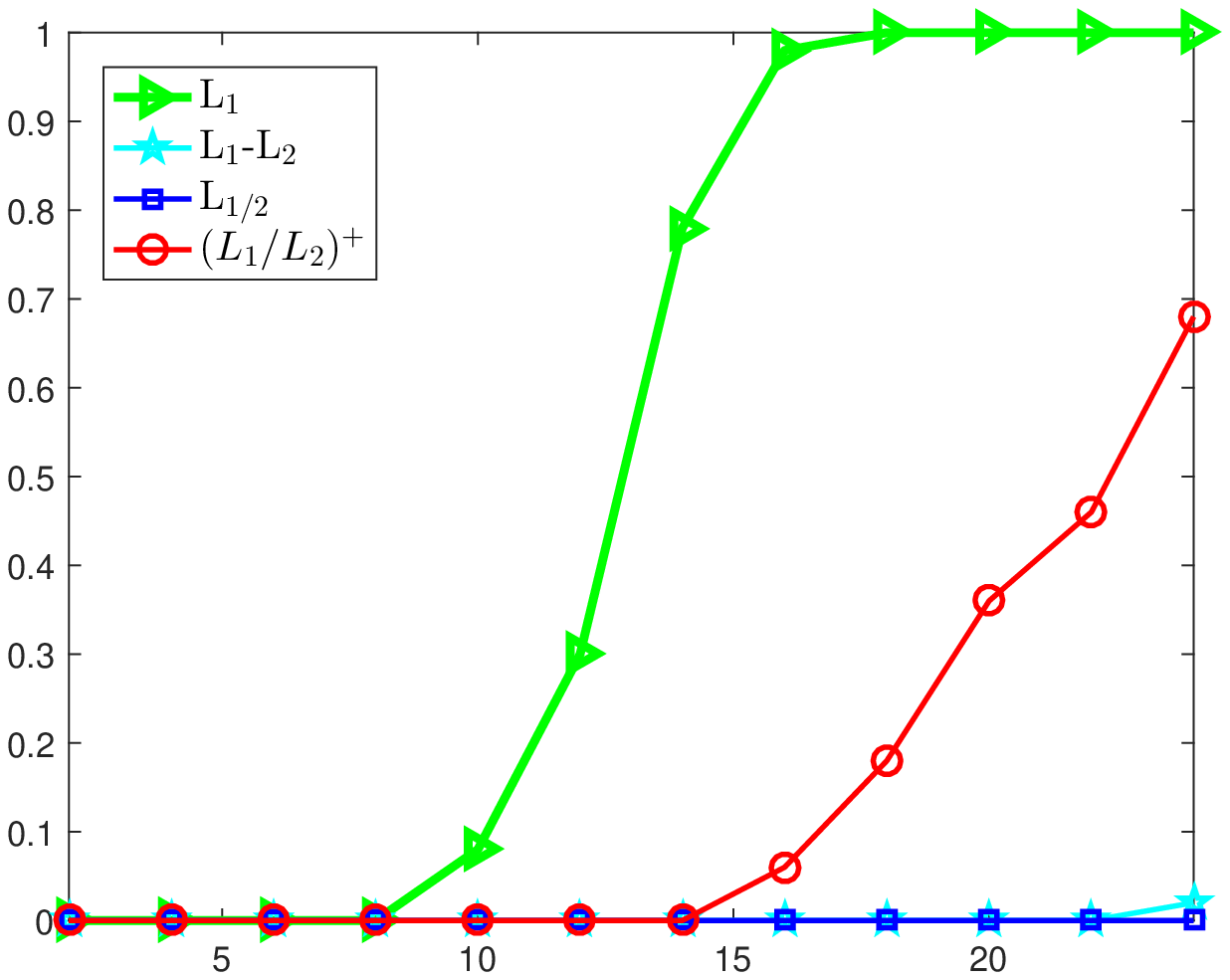}\\
		\end{tabular}	
	}
	\caption{Comparison results in the noisefree  case based on the oversampled DCT matrix with $F=10$ (left) and and  Gaussian matrix with $r=0.8$ (right). From top to bottom: success rates,  algorithm failures and model failures.}\label{suc}
\end{figure}
\subsection{Comparison on various models}
We show the efficiency of the proposed  ADMM$_p^+$ for $(L_1/L_2)^+$ minimization under the noiseless observation.  We compare  with other  sparse recovery unconstrained models: $L_1$,  $L_{1/2}$ \cite{CR07}, and $L_1
$-$L_2$ \cite{LY17}, all in an unconstrained formulation without nonnegative constraint.
We use the default setting  for each algorithm and unify their stopping criteria  as (\ref{StopC}) and
 set $\gamma=10^{-6}$ in all these models due to  the noisefree.
We consider over-sampled DCT matrix with $F=10$ and Gaussian matrix with $r=0.8$  of size $64\times 1024$, and the sparsity ranging from $2$ to $24$ with an increment of $2$.
The fidelity of sparse signal recovery is evaluated in terms of {\it success rate}, {\it model failure} and {\it algorithm failure} rates \cite{RWDL19,Tao20}. If  the relative error
of the reconstructed solution ${\hat{\h x}}$ to the ground truth $\h x^*$ is less than $10^{-3}$, we refer to it as a success.
{\it Success rate} is defined as  the number
of successes over the number of trials.
Furthermore,
we classify the failure of not recovery as model/algorithm failures by comparing the objective function $F(\cdot)$ at the ground
truth $\h {x^*}$ and the reconstructed solution ${\hat {\h x}}$.
If $F(\h x^*)<F({\hat {\h x}})$,  we refer it to as
{\it algorithm failure}. Otherwise,  we have {\it model failure}.
In Figure \ref{suc},  we present   success rate and  model/algorithm failure rates  for $(L_1/L_2)^+$,
$L_1$,  $L_{1/2}$ and $L_1
$-$L_2$ by randomly simulating 50 trials for each scenario and computing the average results. For the oversampled DCT case,
 $(L_1/L_2)^+$ achieves the highest
success rate.
For the Gaussian matrix case, $(L_1/L_2)^+$ exhibits a slightly better than $L_{1/2}$ when the sparsity is less than 22
and otherwise comparable to $L_{1/2}$ in terms of success rate.
Nevertheless, $(L_1/L_2)^+$ performs much better than $L_1$ and $L_1
$-$L_2$  for  Gaussian case regarding  success rate.
Based on model/algorithm failure rates in Figure \ref{suc}, we observe that the algorithm failure rates of $(L_1/L_2)^+$ are much lower  than that of $L_{1/2}$ and $L_1
$-$L_2$ for both cases and achieves the lowest (as well as the $L_1$) for the oversampled DCT case. The  model failure rates of $(L_1/L_2)^+$  rank  second for both cases and are always worse than $L_1$-$L_2$ and $L_{1/2}$ and better than $L_1$.
 These results
illustrate the efficiency of the ADMM$_p^+$ for both types of the sensing matrices and prompt us to further work on the model improvement of $(L_1/L_2)^+$ when
the sparsity level is increasing.
\subsection{Recovery of nonnegative signal from coherent dictionaries}
We illustrate  the efficiency of the ADMM$_p^+$ for solving  Examples 1-3 of \cite{YEX14} by comparing with the scaled gradient projection method (SGPM) which is a state-of-the-art algorithm in $(L_1/L_2)$ area  \cite{ELX13,YEX14}.   These examples are constructed to show the superiority of $(L_1/L_2)$ minimization over $L_1$ or $L_p$ ($0<p<1$) minimization.
 The SGPM is  forward-backward algorithm applied to (\ref{FBmodel}) by setting $g({\h x})=\iota_{{\mathbb R}^n_+}({\h x})$ and $h({\h x})=\gamma \frac{\|{\h x}\|_1}{\|{\h x}\|_{2}}+\frac{1}{2}\|A {\h x}-\mathbf{b}\|_{2}^{2}$ with  line research.
 All these three examples are linear systems, i.e., $A{\h x}={\h b}$ and denote the corresponding matrix
  as $A^{(i)}$, ${\h b}^{(i)}$  for each $i\in\{1,2,3\}$.
We test on Examples 1-3,  all these matrices $A^{(i)}\;(i=1,2,3)$ are defined with values of $n=50,100$ and $p=0.9,0.95$, and the vectors of ${\h b}^{(i)}\;(i=1,2,3)$
 are  of $n$ random numbers subjected to uniform distribution on $[0,1]$.
 The model parameter of $\gamma$ in (\ref{L1o2uncon}) is set to  $0.01$.

  For the SGPM, we use the defaulted setting as in \cite{ELX13,YEX14}, i.e., $\delta=1, c_{0}=10^{-9}, \;\xi_{1}=2,\; \xi_{2}=10$ and $\sigma=0.01$. Set $\beta=0.8$ in ADMM$_{p}^{+}$. For these three examples, we set the initial point as $x^{0}=0.05(100+0.01 \eta_i)$ and $\eta_i \thicksim N(0,1)$.
 In order to measure the extent of satisfying optimality condition of (\ref{viopt}), we define the Karush-Kuhn-Tucker (KKT) residual on  the  support set (KKT$_R$) of the  last iterate ${\hat {\h x}}$ as:
 \begin{eqnarray*} \text{KKT$_R$} = \left\| \gamma\left(\frac{{\text{sign}}({\hat{\h x}}_{\hat{\Lambda}})}{\|{\hat{\h x}}\|_2}-\frac{\|{\hat{\h x}}\|_1}{\|{\hat{\h x}}\|_2^3}{\hat {\h x}}_{\hat{\Lambda}}\right)+(A_{\hat{\Lambda}})^\top(A{\hat{\h x}}-{\h b})\right\|_2,   \end{eqnarray*}
where ${\hat\Lambda}=\operatorname{supp}(\hat{\h {x}})$.
In Table \ref{Table1.1}, we record the results of ADMM$_p^+$  and SGPM  in terms of final objective function value of (\ref{L1o2uncon}) (Obj),
the KKT$_R$ and computational time in seconds (Time).  Table \ref{Table1.1} clearly shows that ADMM$_{p}^{+}$ performs much better than SGPM in terms of achieving much lower objective function values, ending up with higher accuracy while taking less time.
 For the scenario of $n=50$ and $p=0.9$ of Example 2, we find that ADMM$_{p}^{+}$ recovers the one-sparse solution ${\h x}^{(2)}=[2,0, \cdots, 0]^{\top}$ while SGPM does not.

\begin{table}[h]
		\caption{Comparison  between ADMM$_p^+$  and SGPM   on  Examples 1, 2 and 3 via solving (\ref{L1o2uncon}).}\label{Table1.1}
		\begin{center}\vspace{-0.cm}{\begin{tabular}{ccccccc}
			\hline
			(Ex. ,$n$, $p$)  & \multicolumn{2}{c}{Obj} &\multicolumn{2}{c}{KKT$_R$}& \multicolumn{2}{c}{Time} \\
\cline{2-7}
                         & SGPM &   ADMM$_p^+$  & SGPM &  ADMM$_p^+$    & SGPM & ADMM$_p^+$\\
			\hline
             (1,50,0.95)   &5.64&{\bf 0.011}  &3.35& ${\bf 2.18\times10^{-4}}$ & 0.63  &  {\bf 0.33}\\
             (1,50,0.9)    &5.63&{\bf 0.010}  &3.35& ${\bf 4.38\times10^{-4}}$ & 0.64  &  {\bf 0.25}\\
             (1,100,0.95)  &11.2&{\bf 0.010}  &4.73& ${\bf 1.22\times10^{-4}}$ & 2.11  &  {\bf 0.97}\\
             (1,100,0.9)   &11.2&{\bf 0.010}  &4.73& ${\bf 2.69\times10^{-4}}$ & 2.02  &  {\bf 0.81}\\
             \hline
             (2,50,0.95)  &5.64&{\bf 0.010}  &3.35& ${\bf 5.69\times10^{-6}}$ & 0.61   & {\bf 0.45}\\
             (2,50,0.9)   &5.63&{\bf 0.010}  &3.35& ${\bf 1.05\times10^{-5}}$ & 0.59   & {\bf 0.16}\\
             (2,100,0.95) &11.2&{\bf 0.010}  &4.73& ${\bf 9.92\times10^{-6}}$ & 1.72   & {\bf 0.92} \\

             (2,100,0.9)  &11.2&{\bf 0.010}  &4.72& ${\bf 1.18\times10^{-5}}$ & 1.98   & {\bf 0.27}\\
             \hline
             (3,50,0.95)  & 3.00&{\bf 0.084}  &2.42& ${\bf 2.56\times10^{-3}}$  & 0.61 & {\bf 0.34}\\
             (3,50,0.9)   & 2.98&{\bf 0.083}  &2.41& ${\bf 1.97\times10^{-3}}$  & 0.66 & {\bf 0.38} \\
             (3,100,0.95) & 5.81&{\bf 0.11}   &3.38& ${\bf 1.24\times10^{-3}}$  & 1.81 & {\bf 0.95}\\
             (3,100,0.9)  & 5.77&{\bf 0.11}   &3.37& ${\bf 7.00\times10^{-4}}$ & 1.88  &  {\bf 0.95}\\
			\hline
		\end{tabular}}
	\end{center}
\end{table}
\subsection{DOAS}  We consider the wavelength misalignment problem in different optical absorption spectroscopy analysis (DOAS).
More specifically,
$
{\bm J}(\lambda)=\sum_{j=1}^{M} a_{j} {\h y}_{j}$ $\left(\lambda+v_{j}(\lambda)\right)+{\bm\eta}(\lambda).
$
 ${\bm J}(\lambda)$ presents the data and ${\h y}_{j}\left(\lambda+v_{j}(\lambda)\right)$ denotes the reference spectra
at the deformed wavelength $\lambda+v_{j}(\lambda)$ where $v_j(\cdot)$  denotes the deformations. The noise $\bm{\eta}(\lambda)$ are given at the wavelength $\lambda$ and $\left\{a_{j}\right\}_{j=1}^M$ are coefficients.

 In our experiments,    we  generate a dictionary
 for three  reference gases ($M=3$): HONO, NO2 and $\mathrm{O} 3$, and then deform each with a set of linear functions,
  i.e., $v_{j}(\lambda)=p_{j} \lambda+q_{j}$. We use $B_{j}$ $(j=1, \cdots, M)$ to denote a matrix with each column being deformed basis, i.e.. ${\h y}_{j}\left(\lambda+p_{k} \lambda+q_{\ell}\right)$ $(k=1, \cdots, K;\; \ell=1, \cdots, L)$
and
$
 {\h y}_{j} \in {\mathbb R}^{1024};\; p_{k}=-1.01+0.01 k,\; q_{\ell}=-1.1+0.1\ell.
$
By setting $K=L=21$, there is a total of 441 linearly deformed references for each of the three groups.

We generate the dictionary by imitating the relative magnitudes of a real DOAS dataset \cite{FP00} with normalization to the dictionary.
Then, to generate the data ${a}_j$, we  randomly pick up one entry with random magnitudes whose mean values are $1, 0.1, 2$ for HONO, NO2 and O3, respectively. Finally, the synthetic data ${\cal J}(\lambda)$ is generated by adding zero-mean Gaussian noise.
We test five different noise levels: ${\tt std}=0,1e-3,5e-3,1e-2,5e-2$.


 We solve the wavelength misalignment by considering the following model:
\begin{eqnarray} \label{DOASP}
 \min_{\{{\h x}_{j}\}_j} \frac{1}{2}\left\|\boldsymbol{J}-\left[B_{1}, \cdots, B_{M}\right]\left[\begin{array}{c}
\mathbf{x}_{1} \\
\vdots \\
\mathbf{x}_{M}
\end{array}\right]\right\|^{2}\!+\!\gamma \!\sum_{j=1}^{M} R\left(\mathbf{x}_{j}\right),
\end{eqnarray}
where $R(\cdot)$ represents the regularization function, and ${\h x}_j\in\mathbb R^{441}$ (${j=1,2,3}$).
We test (\ref{DOASP}) on different regularization functions to enforce sparsity. In particular, we set $R({\h x})=\iota_{\mathbb R^n_+}(\h x)$, $\|{\h x}\|_1 + \iota_{\mathbb R^n_+}(\h x)$,  $\|{\h x}\|_{1}-\|{\h x}\|_2$, $\|{\h x}\|^{1/2}$, $\frac{\|{\h x}\|_{1}}{\|{\h x}\|_2} + \iota_{\mathbb R^n_+}(\h x)$ in (\ref{DOASP}), respectively.
 We refer to these models as non-negative least square (NNLS), non-negative unconstrained $L_1$ (NNL1), $L_1$-$L_2$, $L_{1/2}$, $(L_1/L_2)^+$.
 For (\ref{DOASP}) with $(L_1/L_2)^+$ regularizer, we adopt ADMM$_p^+$ and SGPM to solve it.
 For  $L_1$-$L_2$, we use Algorithm 1 in \cite{LouOX15}.
 For NNLS, we use MATLAB's {\tt lsqnonneg} function.
 As for NNL1, we solve it by ADMM and
 for $L_{1/2}$, we solve it by \cite{LXY13}. For all these methods, we use the default setting.

 Tables \ref{Table1.2} and \ref{Table1.33} show the errors (${\tt err}=\|{\hat{\h x}}-{\h x}^*\|_2$) between the reconstructed vectors and the ground-truth, and
 computational time (Time (s)) under different  amounts of noise, respectively.  Each recorded value is the average of $20$ random realizations.
 In Fig. 2,  the ground truth  and  the error vectors of these comparing algorithms defined by the constructed signals minus the true signal  are presented in Plots (a) and (b),
 for the  scenario of ${\tt std}=0.05$. The horizontal heavy yellow line  surrounded by  cyan is caused by the full-dimension error vectors from $L_1$-$L_2$, SGPM, NNL1, $L_{1/2}$.
 The deviation of ADMM$_p^+$ is much smaller than the others.
  ADMM$_p^+$ achieves the best recovery quality in the sense of highest accuracy and sparsity.
 All the results demonstrate that ADMM$_p^+$ is comparable to
 NNLS in terms of accuracy for noiseless data, and even more  accurate than NNLS for noisy cases.
 In comparison with NNL1, $L_1$-$L_2$, ADMM$_p^+$ also ends up with much higher accuracy
 and takes less time. In contrast with $L_{1/2}$, ADMM$_p^+$ converges to a much more accurate solution while consuming a bit more
 time.
 Besides, for solving the same $(L_1/L_2)^+$ model, ADMM$_p^+$ costs significantly less time than
 SGPM while still achieving a much more accurate solution. More specificially, ADMM$_p^+$ reduces computational time by about $95\%\sim99\%$ compared to SGPM.

\begin{table}[t]
	\begin{center}
		\caption{Reconstructed error (${\tt err}=\|{\hat{\h x}}-{\h x}^*\|_2$) for DOAS.
}\label{Table1.2}
		\vspace{-0cm}\begin{tabular}{ccccccc}
			\hline
			{\tt std} \!\!&\!\! NNLS & NNL1 & $L_1$-$L_2$& $L_{1/2}$&\multicolumn{2}{c}{$(L_1/L_2)^+$}\\
			\cline{6-7}
               \!\!&\!\!    &     &           &          & ADMM$_p^+$&SGPM \\
                \hline
                 0     \!\!&\!\!  7.72e-16   & 2.70e-03    &   5.21e-05  &  3.60e-03   & {\bf 2.58e-05}&4.80e-02\\
                0.001  \!\!&\!\!  4.91e-03   & 7.95e-03    &   8.76e-04   &  2.41e-02  & {\bf 4.37e-04}&7.60e-02\\
                0.005  \!\!&\!\!  3.26e-02   & 1.92e-02    &   3.05e-03   &  6.95e-02  & {\bf 2.03e-03}&3.75e-01\\
                0.01  \!\!&\!\!  1.46e-01   & 1.61e-01    &   4.90e-03   &  1.04e-01   & {\bf 4.25e-03}&3.84e-01\\
                0.05  \!\!&\!\!  1.73e-01   & 1.75e-01    &   2.39e-02   &  1.30e-01   & {\bf 2.00e-02}&5.67e-01\\
                \hline
		\end{tabular}
	\end{center}
\end{table}

\begin{table}[t]
	\begin{center}
		\caption{Computational time (s) for DOAS under different noisy level.
}\label{Table1.33}
		\vspace{-0cm}\begin{tabular}{ccccccr}
			\hline
			{\tt std} \!\!&\!\! NNLS & NNL1 & $L_1$-$L_2$& $L_{1/2}$&\multicolumn{2}{c}{$(L_1/L_2)^+$}\\
          \cline{6-7}
               \!\!&\!\!    &     &           &          & ADMM$_p^+$ & SGPM \\
                \hline
                0    \!\!&\!\! 0.021   & 8.05     &   13.30  &  0.10    & 5.66 & 3110.00\\
                0.001 \!\!&\!\! 0.047   & 8.19    &   35.60  &  0.11   & 6.59 & 3030.00\\
                0.005 \!\!&\!\! 0.016   & 7.77    &   28.10  &  0.13   & 7.17 & 437.00\\
                0.01 \!\!&\!\! 0.057   & 8.50     &   33.90  &  0.13    & 8.34 & 167.00\\
                0.05 \!\!&\!\! 0.052   & 8.82     &   70.90  &  0.13    & 4.00 & 276.00\\
			\hline
		\end{tabular}
	\end{center}
\end{table}

\begin{figure}[htbp]
\vspace{0cm}\centering{\vspace{0cm}
\includegraphics[scale=0.3]{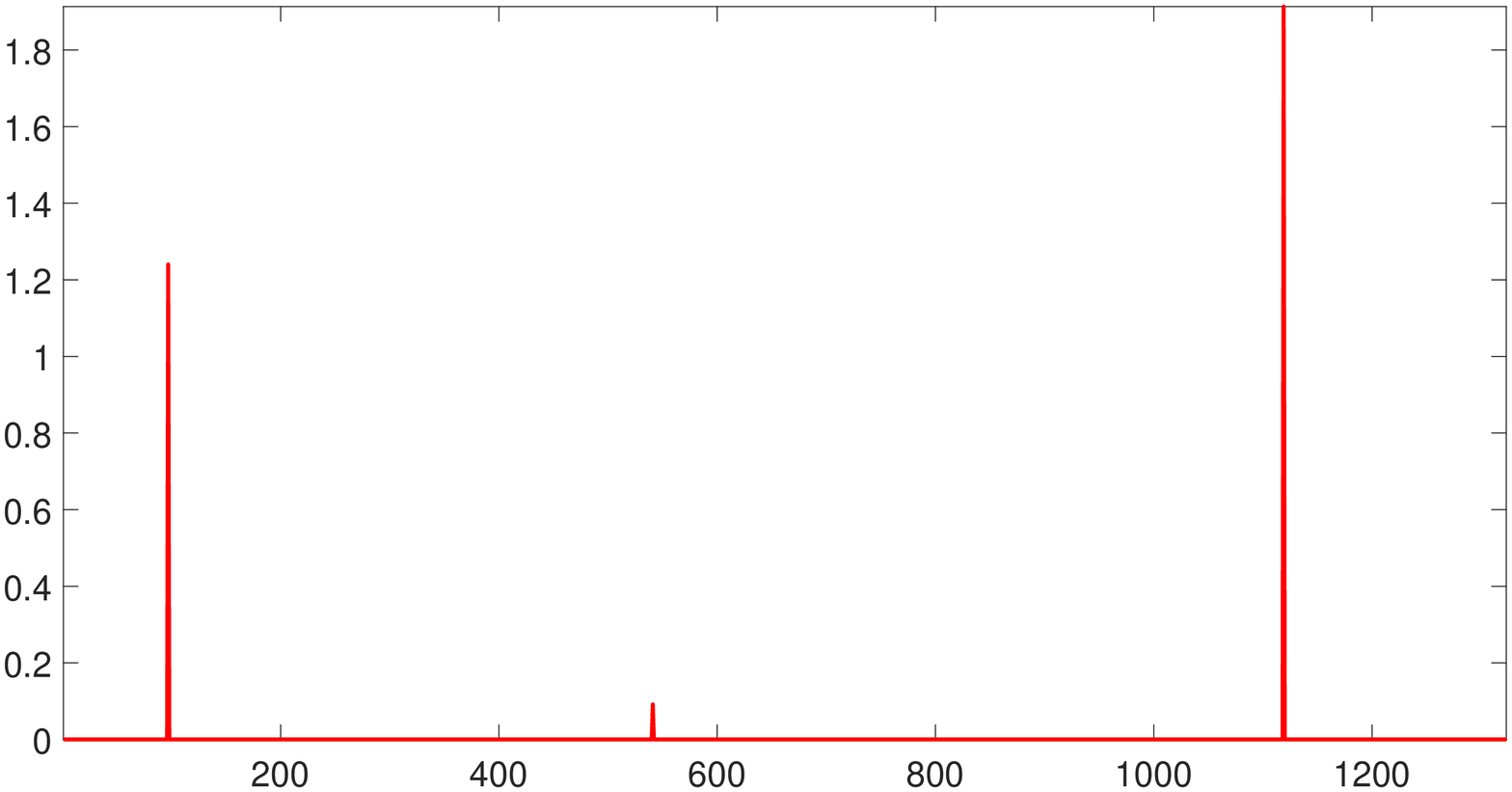}
\includegraphics[scale=0.3]{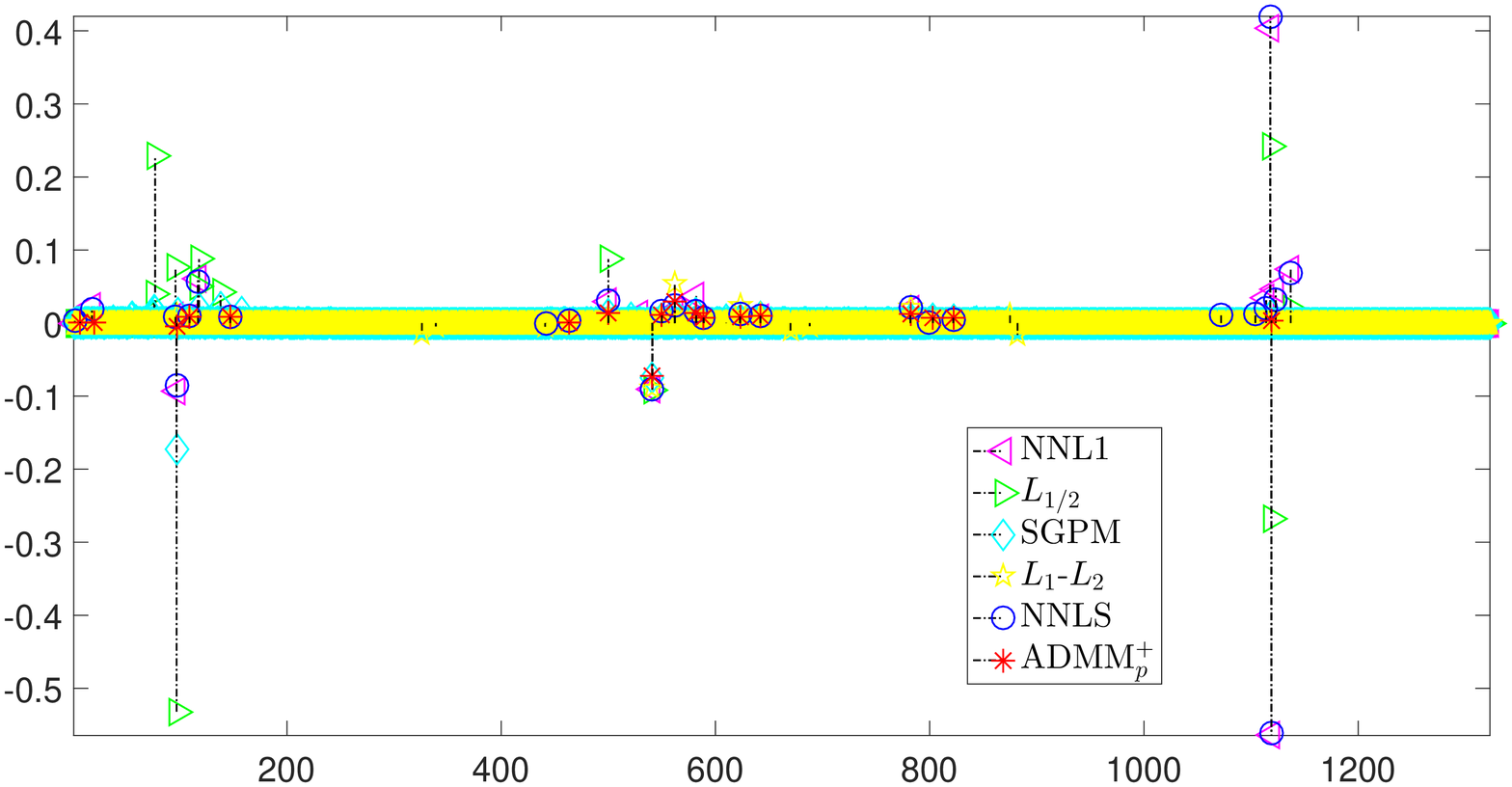}
} \caption{Comparison results for NNLS, NNL1, $L_1$-$L_2$, $L_{1/2}$, SGPM and ADMM$_p^+$  on DOAS data with additive noise (${\tt std}=5e-2$).
 (a) The ground truth with sparsity $3$ (top); (b)
 The error vectors (ERR$={\hat{\h x}}-{\h x}^*$) from these comparing algorithms, and
 its nonzero numbers of these error vectors are $27$,$\ 1323$,$\ 1323$,$\ 1323$,$\ 1323$,$\ 18$, respectively (bottom).
}\label{DOAS}\end{figure}

\section{Conclusions} \label{Sec-Conclusion}
 We carry out a unified theoretical study on  both $L_1/L_2$ minimization models, including
 the constrained and the unconstrained.
First, we prove that the existence of the globally optimal solution can be guaranteed by
   the  $\mu$-spherical section property of the null space of the matrix $A$.
   Second, we analyze the sparsity property of the constrained and the unconstrained models.
Third, we derive a closed-form solution of the proximal
   operator of $\left(L_{1} / L_{2}\right)^{+}$.
     Equipped with this, we propose  a specific splitting scheme (ADMM$_{p}^{+}$)
to solve the unconstrained $\left(L_{1}/L_{2}\right)^{+}$ model.
We establish its global convergence to a d-stationary solution
by verifying the KL property of the merit function.
Numerical simulations validate our analyses and demonstrate
that ADMM$_{p}^{+}$ outperforms  other state-of-the-art methods in sparse recovery.\\

\noindent{\bf Funding}
Min Tao was  partially supported by National Key Research and Development Program of China (2018AAA0101100), the Natural Science Foundation of China (No. 11971228) and  Jiangsu University QingLan Project. The work of Xiao-Ping Zhang is supported by the Natural Sciences and Engineering
Research Council of Canada (NSERC), Grant No. RGPIN-2020-04661.\\
\noindent{\bf Data Availibility}
The datasets generated during and/or analysed during the current study are available from
the corresponding author on reasonable request.\\
\noindent{\bf Declarations}
Conflict of interests The authors have no relevant financial or non-financial interests to disclose.

\end{document}